\newcommand{\CC}{\mathbb{C}}
\newcommand{\ZZ}{\mathbb{Z}}
\newcommand{\NN}{\mathbb{N}}
\newcommand{\RR}{\mathbb{R}}
\newcommand{\PP}{\mathbb{P}}
\newcommand{\calR}{\mathcal{R}}
\newcommand{\calC}{\mathcal{C}}
\newcommand{\calO}{\mathcal{O}}
\newcommand{\calB}{\mathcal{B}}
\newcommand{\calX}{\mathcal{X}}
\newcommand{\calD}{\mathcal{D}}
\newcommand{\frakB}{\mathfrak{B}}
\newcommand{\frakX}{\mathfrak{X}}
\newcommand{\frakp}{\mathfrak{p}}
\newcommand{\frakf}{\mathfrak{f}}
\newcommand{\Id}{\operatorname{Id}}
\newcommand{\sign}[1]{\operatorname{sign}({#1})}
\newcommand{\pl}[1]{\foreignlanguage{polish}{#1}}
\newcommand{\abs}[1]{\lvert {#1} \rvert}
\newcommand{\sprod}[2]{\langle {#1}, {#2} \rangle}
\newcommand{\tr}{\operatorname{tr}}
\newcommand{\GL}{\operatorname{GL}}
\newcommand{\discr}{\operatorname{discr}}
\newcommand{\Mat}{\operatorname{Mat}}
\newcommand{\sigmaEss}{\sigma_{\mathrm{ess}}}
\newcommand{\sigmaAC}{\sigma_{\mathrm{ac}}}
\newcommand{\sigmaS}{\sigma_{\mathrm{sing}}}
\newcommand{\ue}{\textrm{e}}
\newcommand{\supp}{\operatornamewithlimits{supp}}
\newtheorem{theorem}{Theorem}[section]
\newtheorem{proposition}[theorem]{Proposition}
\newtheorem{lemma}[theorem]{Lemma}
\newtheorem{corollary}[theorem]{Corollary}
\newtheorem{claim}[theorem]{Claim}
\theoremstyle{plain}
\newcounter{thm}
\newtheorem{main_theorem}[thm]{Theorem}
\theoremstyle{definition}
\newtheorem{example}[theorem]{Example}
\newtheorem{remark}[theorem]{Remark}
\newtheorem{definition}[theorem]{Definition}
\numberwithin{equation}{section}
\title[Essential spectra of unbounded Jacobi matrices]
{About essential spectra of unbounded Jacobi matrices}
\author{Grzegorz Świderski}
\address{
	\pl{
		Grzegorz \'Swiderski \\
		Department of Mathematics \\
		KU Leuven \\
		Celestijnenlaan 200B box 2400 \\
		BE-3001 Leuven \\
		Belgium \&
		Mathematical Institute \\
		University of Wroc\l{}aw \\
		pl. Grunwaldzki 2/4 \\
		50-384 Wroc\l{}aw \\
		Poland
	}
}
\email{grzegorz.swiderski@kuleuven.be}
\author{Bartosz Trojan}
\address{
	\pl{
		Bartosz Trojan\\
        Instytut Matematyczny\\
        Polskiej Akademii Nauk\\
        ul. \'Sniadeckich 8\\
        00-696 Warszawa\\
        Poland}
}
\email{btrojan@impan.pl}
\keywords{Jacobi matrix, orthogonal polynomials, essential spectrum, discrete spectrum, discrete Levinson's type theorems}
\subjclass[2010]{Primary: 47B25, 47B36. Secondary: 42C05, 39A22.}
\begin{document}
\selectlanguage{english}

\begin{abstract}
We study spectral properties of unbounded Jacobi matrices with periodically modulated or blended entries. 
Our approach is based on uniform asymptotic analysis of generalized eigenvectors. We determine when the
studied operators are self-adjoint. We identify regions where the point spectrum has no accumulation points. 
This allows us to completely describe the essential spectrum of these operators.
\end{abstract}
\maketitle

\section{Introduction}
Consider two sequences $a = (a_n : n \in \NN_0)$ and $b = (b_n : n \in \NN_0)$ such that $a_n > 0$ and $b_n \in \RR$
for all $n \geq 0$. Let $A$ be the closure in $\ell^2(\NN_0)$ of the operator acting on sequences having finite support
by the matrix
\[
	\begin{pmatrix}
		b_0 & a_0 & 0   & 0      &\ldots \\
		a_0 & b_1 & a_1 & 0       & \ldots \\
		0   & a_1 & b_2 & a_2     & \ldots \\
		0   & 0   & a_2 & b_3   &  \\
		\vdots & \vdots & \vdots  &  & \ddots
	\end{pmatrix}.
\]
The operator $A$ is called \emph{Jacobi matrix}. Recall that $\ell^2(\NN_0)$ is the Hilbert space
of square summable complex valued sequences with the scalar product
\[
	\sprod{x}{y}_{\ell^2(\NN_0)} = \sum_{n=0}^\infty x_n \overline{y_n}.
\]
The most throughly studied are bounded Jacobi matrices, see e.g. \cite{Simon2010Book}. Let us remind that the Jacobi
matrix $A$ is bounded if and only if the sequences $a$ and $b$ are bounded. 
In this article we are exclusively interested in \emph{unbounded} Jacobi matrices. We shall consider 
two classes: periodically modulated and periodically blended. The first class has been introduced in
\cite{JanasNaboko2002} and systematically studied since then. To be precise, let $N$ be a positive
integer. We say that $A$ has \emph{$N$-periodically modulated} entries if there are two $N$-periodic
sequences $(\alpha_n : n \in \ZZ)$ and $(\beta_n : n \in \ZZ)$ of positive and real numbers, respectively,
such that
\begin{enumerate}[leftmargin=2em, label=\alph*)]
	\item
	$\begin{aligned}[b]
	\lim_{n \to \infty} a_n = \infty
	\end{aligned},$
	\item
	$\begin{aligned}[b]
	\lim_{n \to \infty} \bigg| \frac{a_{n-1}}{a_n} - \frac{\alpha_{n-1}}{\alpha_n} \bigg| = 0
	\end{aligned},$
	\item
	$\begin{aligned}[b]
	\lim_{n \to \infty} \bigg| \frac{b_n}{a_n} - \frac{\beta_n}{\alpha_n} \bigg| = 0
	\end{aligned}.$
\end{enumerate}
This class contains sequences one can find in many applications. It is also rich enough to allow building an intuition
about the general case. In particular, in this class there are examples of Jacobi
matrices with purely absolutely continuous spectrum filling the whole real line (see \cite{JanasNaboko2002, 
PeriodicI, PeriodicII, SwiderskiTrojan2019, JanasNaboko2001}), having a bounded gap in absolutely continuous 
spectrum (see \cite{Dombrowski2004, Dombrowski2009, DombrowskiJanasMoszynskiEtAl2004, DombrowskiPedersen2002a, 
DombrowskiPedersen2002, JanasMoszynski2003, JanasNabokoStolz2004, Sahbani2016, Janas2012}), having absolutely
continuous spectrum on the half-line (see \cite{Damanik2007, Janas2001, Janas2009, Motyka2015, Naboko2009,
Naboko2010, Simonov2007, DombrowskiPedersen1995, Naboko2019}), having purely singular continuous spectral measure
with explicit Hausdorff dimension (see \cite{Breuer2010}), having a dense point spectrum on the real line
(see \cite{Breuer2010}), and having an empty essential spectrum (see \cite{Silva2004, Silva2007, Silva2007a, 
HintonLewis1978, Szwarc2002}).

The second class, that is blended Jacobi matrices (see Definition~\ref{def:3}), has been introduced in \cite{Janas2011}
as an example of unbounded Jacobi matrices having absolutely continuous spectrum equal to a finite union of compact
intervals. It has been further studied in \cite{ChristoffelI, SwiderskiTrojan2019} in the context of orthogonal
polynomials.

Before we formulate the main results of this paper, let us introduce some definitions. In our investigation, the crucial
r\^ole is played by the \emph{transfer matrix} defined as
\[
	B_j(x) = 
	\begin{pmatrix}
		0 & 1 \\
		-\frac{a_{j-1}}{a_j} & \frac{x - b_j}{a_j}
	\end{pmatrix}.
\]
We say that a sequence $(x_n : n \in \NN)$ of vectors from a normed vector space $V$
belongs to $\calD_r (V)$ for a certain $r \in \NN_0$, if it is \emph{bounded,} and for each $j \in \{1, \ldots, r\}$,
\[
	\sum_{n = 1}^\infty \big\| \Delta^j x_n \big\|^\frac{r}{j} < \infty
\]
where
\begin{align*}
	\Delta^0 x_n &= x_n, \\
	\Delta^j x_n &= \Delta^{j-1} x_{n+1} - \Delta^{j-1} x_n, \qquad j \geq 1.
\end{align*}
If $X$ is the real line with Euclidean norm we abbreviate $\calD_{r} = \calD_{r}(X)$. Given a compact set
$K \subset \CC$ and a normed vector space $R$, we denote by $\calD_{r}(K, R)$ the case when $X$ is the space of all
continuous mappings from $K$ to $R$ equipped with the supremum norm.

\begin{main_theorem} 
	\label{thm:A}
	Suppose that $A$ is a Jacobi matrix with $N$-periodically modulated entries. Let
	\[
		\calX_0(x) = \lim_{n \to \infty} X_{nN}(x)
	\]
	where 
	\[
		X_n(x) = B_{n+N-1}(x) B_{n+N-2}(x) \cdots B_n(x).
	\]
	Assume that\footnote{For a real matrix $X$ we define its discriminant as $\discr X = (\tr X)^2 - 4 \det X$.}
	$\discr \calX_0(0) > 0$. If there are a compact set $K \subset \RR$ with at least $N+1$ points,
	$r \in \NN$ and $i \in \{0, 1, \ldots, N-1 \}$, so that\footnote{By $\Mat(d, \RR)$ we denote the real matrices
	of dimension $d \times d$ with the operator norm.}
	\begin{equation}
		\label{eq:131}
		\big( X_{nN+i} : n \in \NN \big) \in \calD_r \big( K, \Mat(2, \RR) \big),
	\end{equation}
	then $A$ is self-adjoint and\footnote{For a self-adjoint operator $A$ we denote by $\sigmaEss(A), \sigmaAC(A)$
	and $\sigmaS(A)$ its essential spectrum, the essential spectrum and the singular spectrum, respectively.}
	$\sigmaEss(A) = \emptyset$.
\end{main_theorem}
Recall that a sufficient condition for self-adjointness of the operator $A$ is \emph{the Carleman's condition}
(see e.g. \cite[Corollary 6.19]{Schmudgen2017}), that is
\begin{equation}
	\label{eq:19}
    \sum_{n=0}^\infty \frac{1}{a_n} = \infty.
\end{equation}
The conclusion of Theorem~\ref{thm:A} is in strong contrast with the case when $\discr \calX_0(0) < 0$.
Indeed, if $\discr \calX_0(0) < 0$, then by \cite[Theorem A]{SwiderskiTrojan2019}, the operator $A$ is self-adjoint
if and only if the Carleman's condition is satisfied. If it is the case then $A$ is purely absolutely continuous and
$\sigma(A) = \RR$.

Under the Carleman's condition, the conclusion of Theorem~\ref{thm:A} for $r=1$ has been proven in \cite{JanasNaboko2002}
by showing that the resolvent of $A$ is compact. Furthermore, by \cite[Theorem 8]{Chihara1962}
(see also \cite[Theorem 2.6]{Szwarc2002}) it follows that if a self-adjoint Jacobi matrix $A$ is $1$-periodically
modulated with $\discr \calX_0(0) > 0$, then $\sigmaEss(A) = \emptyset$, i.e. the condition \eqref{eq:131} is
not necessary here.
\begin{main_theorem} 
	\label{thm:B}
	Suppose that $A$ is a Jacobi matrix with $N$-periodically blended entries. Let
	\[
		\calX_1(x) = \lim_{n \to \infty} X_{n(N+2)+1}(x)
	\]
	where
	\[
		X_n(x) = B_{n+N+1}(x) B_{n+N}(x) \cdots B_n(x). 
	\]
	If there are a compact set $K \subset \RR$ with at least $N+3$ points, $r \in \NN$, and $i \in \{1,2,  \ldots, N \}$,
	so that 
	\[
		\big( X_{n(N+2)+i} : n \in \NN \big) \in \calD_{r} \big( K, \Mat(2, \RR) \big),
	\]
	then $A$ is self-adjoint and 
	\[
		\sigmaS(A) \cap \Lambda = \emptyset \quad \text{and} \quad
		\sigmaAC(A) = \sigmaEss(A) = \overline{\Lambda}
	\]
	where
	\[
		\Lambda = \big\{ x \in \RR : \discr \calX_1(x) < 0 \big \}.
	\]
\end{main_theorem}
For the proof of Theorem \ref{thm:B} see Theorem \ref{thm:6}. Let us comment that in Theorem \ref{thm:B}, the absolute
continuity of $A$ follows by \cite[Theorem B]{SwiderskiTrojan2019}. Moreover, by \cite[Theorem 3.13]{ChristoffelI}
it stems that $\Lambda$ is a union of $N$ open disjoint bounded intervals. For $r = 1$ and under certain very strong
assumptions, Theorem \ref{thm:B} has been proven in \cite[Theorem 5]{Janas2011}.

The following results concerns the case when $\discr \calX_0(0) = 0$. For the proof see Theorem \ref{thm:10}.
\begin{main_theorem}
	\label{thm:C}
	Let $A$ be a Jacobi matrix with $N$-periodically modulated entries, and let $X_n$ and $\calX_0$ be defined as in
	Theorem~\ref{thm:A}. Suppose that $\calX_0(0) = \sigma \Id$ for any $\sigma \in \{-1, 1\}$, and that there are two
	$N$-periodic sequences $(s_n : n \in \NN_0)$ and $(z_n : n \in \NN_0)$, such that
	\[
		\lim_{n \to \infty} \bigg|\frac{\alpha_{n-1}}{\alpha_n} a_n - a_{n-1} - s_n\bigg| = 0,
		\qquad
		\lim_{n \to \infty} \bigg|\frac{\beta_n}{\alpha_n} a_n - b_n - z_n\bigg| = 0.
	\]
	Let $R_n = a_{n+N-1}(X_n - \sigma \Id)$. Then $(R_{kN} : k \in \NN_0)$ converges locally uniformly on $\RR$ to 
	$\calR_0$. If there are a compact set $K \subset \RR$ with at least $N+1$ points and $i \in \{1,2,  \ldots, N \}$, 
	so that
	\[
		\big( R_{nN+i} : n \in \NN \big) \in 
		\calD_{1} \big( K, \Mat(2, \RR) \big),
	\]
	then $A$ is self-adjoint and
	\[
		\sigmaS(A) \cap \Lambda = \emptyset 
		\qquad \text{and} \qquad
		\sigmaAC(A) = \sigmaEss(A) = \overline{\Lambda}
	\]
	where
	\[
		\Lambda = \big\{ x \in \RR : \discr \calR_0(x) < 0 \big\}.
	\]
\end{main_theorem}
In fact, Theorem~\ref{thm:C} completes the analysis started in \cite{PeriodicIII} where it has been shown that
$\overline{\Lambda} \subset \sigmaAC(A)$.

Finally, we investigate the case when the Carleman's condition \eqref{eq:19} is \emph{not} satisfied.
\begin{main_theorem} 
	\label{thm:D}
	Let $A$ be a Jacobi matrix with $N$-periodically modulated entries, and let $X_n$ and $\calX_0$ be defined as in
	Theorem~\ref{thm:A}. Suppose that $\calX_0(0) = \sigma \Id$ for any $\sigma \in \{-1, 1\}$, and that the Carleman's
	condition is \emph{not} satisfied. Assume that there are $i \in \{0, 1, \ldots, N-1\}$ and a sequence of positive
	numbers $(\gamma_n : n \in \NN_0)$ satisfying
	\[
		 \sum_{n=0}^\infty \frac{1}{\gamma_n} = \infty,
	\]
	such that $R_{nN+i}(0) = \gamma_n(X_{nN+i}(0) - \sigma \Id)$ converges to a non-zero matrix $\calR_i$.
	Suppose that
	\[
		\big( R_{nN+i}(0) : n \in \NN \big) \in \calD_1 \big( K, \Mat(2, \RR) \big).
	\]
	Then
	\begin{enumerate}[(i), leftmargin=2em]
		\item if $\discr \calR_i < 0$, then $A$ is \emph{not} self-adjoint;
		\item if $\discr \calR_i > 0$, then $\sigmaEss(A) = \emptyset$ provided $A$ is self-adjoint.
	\end{enumerate}
\end{main_theorem}
In fact, in Theorem \ref{thm:8a} we characterize when $A$ is self-adjoint. To illustrate Theorem \ref{thm:D},
in Section \ref{sec:KM} we consider the $N$-periodically modulated Kostyuchenko--Mirzoev's class. In this 
context we can precisely describe when the operator $A$ is self-adjoint.

In our analysis the basic objects are \emph{generalized eigenvectors} of $A$. Let us recall that
$(u_n : n \in \NN_0)$ is a generalized eigenvector associated with $x \in \CC$, if for all $n \geq 1$
\[
	\begin{pmatrix}
		u_n \\
		u_{n+1}
	\end{pmatrix} 
	=
	B_n(x) 
	\begin{pmatrix}
		u_{n-1} \\
		u_{n}
	\end{pmatrix}
\]
for a certain $(u_0, u_1) \neq (0,0)$. The spectral properties of $A$ are intimately related to the asymptotic behavior
of generalized eigenvectors. For example, $A$ is self-adjoint if and only if there is a generalized eigenvector associated
with some $x_0 \in \RR$, that is \emph{not} square-summable. In another vein, the theory of subordinacy (see 
\cite{Khan1992}) describes spectral properties of a self-adjoint $A$ in terms of asymptotic behavior of generalized
eigenvectors. In particular, it has been shown in \cite{Silva2007} that the subordinacy theory together
with some general properties of self-adjoint operators imply the following: if $K \subset \RR$ is a compact interval 
such that for each $x \in K$ there is a generalized eigenvector $(u_n(x) : n \in \NN_0)$ associated with $x \in K$, 
so that
\begin{equation} 
	\label{eq:132}
	\sum_{n=0}^\infty \sup_{x \in K} |u_n(x)|^2 < \infty,
\end{equation}
then $\sigmaEss(A) \cap K = \emptyset$. In \cite{Silva2007}, for some class of Jacobi matrices the condition~\eqref{eq:132}
has been checked with a help of uniform discrete Levinson's type theorems. In this article we take similar approach.
In particular, in Theorems~\ref{thm:2} and \ref{thm:3}, we prove our uniform Levinson's theorems. They improve the
existing results known in the literature. More precisely, Theorem~\ref{thm:2} with $r \geq 2$ in the case of negative 
discriminant, improves the pointwise theorem \cite[Theorem 3.1]{Moszynski2006}. The case of positive discriminant 
for $r > 2$ has not been studied before, even pointwise. Concerning the uniformity, Theorem~\ref{thm:2} improves \cite{Silva2004}, where for $r=1$ it was assumed that the limiting matrix is constant. Our analysis shows that this condition can be dropped (see the comment after proof of Theorem~\ref{thm:3}). We prove uniformity by constructing explicit diagonalization of the relevant matrices. The case of positive discriminant provides more technical challenges than the negative one.
If the Carleman's condition is not satisfied, our Levinson's type
theorems allowed us to study asymptotic behavior of generalized eigenvectors on the whole complex plane for a
large class of sequences $a$ and $b$. In particular, our results cover the asymptotic recently obtained by Yafaev in
\cite{Yafaev2019}, see Corollary~\ref{cor:3} for details. Let us emphasize that our approach is different than 
used in \cite{Yafaev2019}.

The organization of the paper is as follows. In Section~\ref{sec:prelim} we collect basic properties and definitions. 
In particular, we prove axillary results concerning periodically modulated and blended Jacobi matrices.
In Section~\ref{sec:stolz} we describe Stolz classes, and prove results necessary to show in Section \ref{sec:levinson}
our Levinson's type theorems which might be of independent interest. In Section~\ref{sec:essential} we apply them
to deduce Theorems~\ref{thm:A}, \ref{thm:B} and \ref{thm:C}. Finally, in Section~\ref{sec:notCarleman} we prove
Theorem~\ref{thm:D}, and study the Kostyuchenko--Mirzoev's class of Jacobi matrices in details.

\subsection*{Notation}
By $\NN$ we denote the set of positive integers and $\NN_0 = \NN \cup \{0\}$. Throughout the whole article, we write $A \lesssim B$ if there is an absolute constant $c>0$ such that
$A \le cB$. We write $A \asymp B$ if $A \lesssim B$ and $B \lesssim A$. Moreover, $c$ stands for a positive constant whose value may vary from occurrence to occurrence.

\subsection*{Acknowledgment}
The first author was partially supported by the Foundation for Polish Science (FNP) and by long term structural funding -- Methusalem grant of the Flemish Government.

\section{Preliminaries} \label{sec:prelim}
Given two sequences $a = (a_n : n \in \NN_0)$ and $b = (b_n : n \in \NN_0)$ of positive and real numbers, respectively, we define $k$th associated \emph{orthonormal} polynomials as
\[
	\begin{gathered}
		p^{[k]}_0(x) = 1, \qquad p^{[k]}_1(x) = \frac{x - b_k}{a_k}, \\
		a_{n+k-1} p^{[k]}_{n-1}(x) + b_{n+k} p^{[k]}_n(x) + a_{n+k} p^{[k]}_{n+1}(x) = 
			x p^{[k]}_n(x), \qquad n \geq 1.
	\end{gathered}
\]
We usually omit the superscript if $k = 0$. Suppose that the Jacobi matrix $A$ corresponding to the sequences $a$ and
$b$ is self-adjoint. Let us denote by $E_A$ its spectral resolution of the identity. Then for any Borel subset
$B \subset \RR$, we set
\[
	\mu(B) = \langle E_A(B) \delta_0, \delta_0 \rangle_{\ell^2(\NN_0)}
\]
where $\delta_0$ is the sequence having $1$ on the $0$th position and $0$ elsewhere. The polynomials $(p_n : n \in \NN_0)$
form an orthonormal basis of $L^2(\RR, \mu)$.

In this article, we are interested in Jacobi matrices associated to two classes of sequences that are defined in terms
of periodic Jacobi parameters. The latter are described as follows.
Let $(\alpha_n : n \in \ZZ)$ and $(\beta_n : n \in \ZZ)$ be two $N$-periodic sequences of real and positive
numbers, respectively. Let $(\mathfrak{p}_n : n \in \NN_0)$ be the corresponding polynomials, that is
\[
	\begin{gathered}
		\mathfrak{p}_0(x) = 1, \qquad \mathfrak{p}_1(x) = \frac{x-\beta_0}{\alpha_0}, \\
		\alpha_n \mathfrak{p}_{n-1}(x) + \beta_n \mathfrak{p}_n(x) 
		+ \alpha_{n} \mathfrak{p}_{n+1}(x)
		= x \mathfrak{p}_n(x), \qquad n \geq 1.
	\end{gathered}
\]
Let
\[
	\frakB_n(x) = 
	\begin{pmatrix}
		0 & 1 \\
		-\frac{\alpha_{n-1}}{\alpha_n} & \frac{x - \beta_n}{\alpha_n}
	\end{pmatrix},
	\qquad\text{and}\qquad
	\frakX_n(x) = \prod_{j = n}^{N+n-1} \mathfrak{B}_j(x), \qquad n \in \ZZ
\]
where for a sequence of square matrices $(C_n : n_0 \leq n \leq n_1)$ we have set
\[
	\prod_{k = n_0}^{n_1} C_k = C_{n_1} C_{n_1-1} \cdots C_{n_0}.
\]

\subsection{Periodic modulation}
\begin{definition}
	\label{def:2}
	We say that the Jacobi matrix $A$ associated to $(a_n : n \in \NN_0)$ and $(b_n : n \in \NN_0)$ has 
	\emph{$N$-periodically modulated entries,} if there are two $N$-periodic sequences
	$(\alpha_n : n \in \ZZ)$ and $(\beta_n : n \in \ZZ)$ of positive and real numbers, respectively, such that
	\begin{enumerate}[(a), leftmargin=2em]
	\item
	$\begin{aligned}[b]
	\lim_{n \to \infty} a_n = \infty
	\end{aligned},$
	\item
	$\begin{aligned}[b]
	\lim_{n \to \infty} \bigg| \frac{a_{n-1}}{a_n} - \frac{\alpha_{n-1}}{\alpha_n} \bigg| = 0
	\end{aligned},$
	\item
	$\begin{aligned}[b]
	\lim_{n \to \infty} \bigg| \frac{b_n}{a_n} - \frac{\beta_n}{\alpha_n} \bigg| = 0
	\end{aligned}.$
	\end{enumerate}
\end{definition}
For a Jacobi matrix $A$ with $N$-periodically modulated entries, we set
\[
	X_n = \prod_{j = n}^{N+n-1} B_j.
\]
Then for each $i \in \{0, 1, \ldots, N-1\}$ the sequence $(X_{jN+i} : j \in \NN_0)$ has a limit $\calX_i$. In view of
\cite[Proposition 3.8]{ChristoffelI}, we have $\calX_i(x) = \frakX_i(0)$ for all $x \in \CC$.

\begin{proposition} 
	\label{prop:10}
	Let $N$ be a positive integer and $\sigma \in \{-1, 1\}$. Let $A$ be a Jacobi matrix with $N$-periodically modulated
	entries so that $\frakX_0(0) = \sigma \Id$. Suppose that there are two $N$-periodic sequences $(s_n : n \in \ZZ)$
	and $(z_n : n \in \ZZ)$, such that
	\[
		\lim_{n \to \infty} \bigg|\frac{\alpha_{n-1}}{\alpha_n} a_n - a_{n-1} - s_n\bigg| = 0,
		\qquad
		\lim_{n \to \infty} \bigg|\frac{\beta_n}{\alpha_n} a_n - b_n - z_n\bigg| = 0,
	\]
	then for each $i \in \{0, 1, \ldots, N-1 \}$ the sequence 
	$\big(a_{(k+1)N+i-1} (X_{kN+i} - \sigma \Id) : k \in \NN \big)$ converges locally uniformly on $\CC$ to 
	$\calR_i$, and
	\[
		\tr \calR_i = -\sigma \lim_{k \to \infty} \big( a_{(k+1)N+i-1} - a_{kN+i-1} \big).
	\]
\end{proposition}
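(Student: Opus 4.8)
The plan is to compute the limit of $a_{(k+1)N+i-1}(X_{kN+i}-\sigma\Id)$ by expanding the matrix product $X_{kN+i}=B_{kN+i+N-1}\cdots B_{kN+i}$ around the periodic model, exploiting the hypothesis $\frakX_0(0)=\sigma\Id$ (hence $\frakX_i(0)=\sigma\Id$ for every $i$, since all the $\frakX_j(0)$ are conjugate and a conjugate of $\pm\Id$ equals $\pm\Id$). First I would write $B_{n}(x)=\frakB_{n}(0)+E_{n}(x)$, where the error matrix is
\[
	E_n(x) =
	\begin{pmatrix}
		0 & 0 \\
		\frac{\alpha_{n-1}}{\alpha_n}-\frac{a_{n-1}}{a_n} & \frac{x-b_n}{a_n}-\frac{-\beta_n}{\alpha_n}
	\end{pmatrix}
	+
	\begin{pmatrix}
		0 & 0 \\
		0 & \frac{x}{a_n}
	\end{pmatrix}.
\]
Using Definition~\ref{def:2}(b),(c) the first summand is $o(1)$, while the $x/a_n$ term is $O(1/a_n)$ locally uniformly in $x$; more importantly, multiplying by $a_{n}$ the entries of $E_n$ are, up to $o(1)$, controlled by $\bigl|\frac{\alpha_{n-1}}{\alpha_n}a_n - a_{n-1}\bigr|$ and $\bigl|\frac{\beta_n}{\alpha_n}a_n - b_n\bigr|$, which by the hypothesis of the proposition converge (along each residue class mod $N$) to the periodic quantities $s_n$ and $z_n$ plus $x$. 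This is the key bookkeeping: $a_{n}E_n(x)\to \widetilde E_n(x)$ locally uniformly, where $\widetilde E_n(x)$ is the explicit matrix with bottom row $(s_n,\; x-z_n)$ (signs to be pinned down from the conventions).

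Next I would expand the telescoping product. Since $\prod_{j}\frakB_j(0)=\frakX_i(0)=\sigma\Id$, the zeroth-order term of $X_{kN+i}$ is exactly $\sigma\Id$, so $X_{kN+i}-\sigma\Id$ is a sum of terms each containing at least one factor $E_{n}$. The terms with two or more error factors are $O\bigl(\sup_n \|E_n\|^2\bigr)\cdot O(1) = o(1/a_{kN+i+\ell})$ for every $\ell$ — here one needs that the partial products of the $\frakB_j(0)$ are bounded (they are, being products over one period of fixed matrices) and that $a_{n+1}/a_n\to \alpha_{n+1}/\alpha_n$ stays bounded, so all the $a$'s appearing in one period are comparable; hence after multiplying by $a_{(k+1)N+i-1}$ these higher-order terms vanish. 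The surviving first-order part is
\[
	a_{(k+1)N+i-1}\sum_{\ell=0}^{N-1} \Bigl(\prod_{j=\ell+1}^{N-1}\frakB_{kN+i+j}(0)\Bigr) E_{kN+i+\ell}(x) \Bigl(\prod_{j=0}^{\ell-1}\frakB_{kN+i+j}(0)\Bigr),
\]
and using $a_{kN+i+\ell}/a_{(k+1)N+i-1}\to 1$ for each fixed $\ell$ (again from (b), since this ratio is a product of finitely many $a_{m-1}/a_m$ each tending to $\alpha_{m-1}/\alpha_m$ and the product over a full period of the $\alpha$-ratios telescopes to $1$), each term converges locally uniformly, giving an explicit formula for $\calR_i$ as a finite sum of products of the periodic matrices $\frakB_j(0)$ with the limiting error matrices $\widetilde E_{kN+i+\ell}$.

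Finally, to extract the trace I would take the trace of that explicit sum. Using cyclicity of the trace, each summand becomes $\tr\bigl(\widetilde E_{kN+i+\ell}(x)\cdot \frakX_{kN+i+\ell+1}(0)\bigr)$ after rotating the product, but again $\frakX_m(0)=\sigma\Id$ for every $m$, so this collapses to $\sigma\,\tr \widetilde E_{kN+i+\ell}(x)$. Since $\widetilde E_n$ has zero top-left entry, $\tr\widetilde E_n(x) = (x-z_n)$-type entry plus nothing from the $(1,1)$ slot; summing over one period, the $x$'s and $z_n$'s would a priori appear, but the statement records only the $a$-difference, so the cleaner route is to compute $\tr\calR_i$ directly from $\tr X_{kN+i}$: one has $\tr(X_{kN+i}) = \sigma\cdot 2 + \tr(X_{kN+i}-\sigma\Id)$, hence $\tr\calR_i = \lim_k a_{(k+1)N+i-1}\bigl(\tr X_{kN+i} - 2\sigma\bigr)$, and then use the known expansion of $\tr X_{kN+i}$ in terms of $a_m, b_m$ (the transfer matrix over one period has determinant $a_{kN+i-1}/a_{(k+1)N+i-1}$ and a trace that, modulo lower order, is dominated by the diagonal contributions), from which the telescoping leaves precisely $-\sigma(a_{(k+1)N+i-1}-a_{kN+i-1})$. \textbf{The main obstacle} I anticipate is the careful control of the higher-order error terms and of the ratios $a_{kN+i+\ell}/a_{(k+1)N+i-1}$: one must show these genuinely tend to the right constants uniformly on compacta in $x$, which requires combining (a) (so the $a_n$ are eventually large and comparable within a block) with (b) (so the block ratios converge), and tracking that the $x$-dependence in $E_n$ is uniformly $O(1/a_n)$ and thus harmless after the block-ratio normalization. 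The trace identity itself should then be a short computation once the $O(\cdot)$ estimates are in place.
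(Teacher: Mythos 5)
Your overall strategy (write $B_{kN+i+\ell}(x)=\frakB_{i+\ell}(0)+E_{kN+i+\ell}(x)$, expand the product, kill the terms with two or more error factors after multiplying by $a_{(k+1)N+i-1}$, and take the trace of the first-order term by cyclicity) is viable and is essentially a self-contained version of what the paper imports from its reference for the convergence statement. But two concrete steps are wrong as written, and the second one derails the trace computation. First, the block ratio: $a_{kN+i+\ell}/a_{(k+1)N+i-1}$ does \emph{not} tend to $1$; it is a product of $N-1-\ell$ consecutive ratios $a_m/a_{m+1}\to\alpha_m/\alpha_{m+1}$, which telescopes to $\alpha_{i+\ell}/\alpha_{i-1}$, not to a product over a full period. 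The correct first-order limit therefore carries the weight $\alpha_{i-1}/\alpha_{i+\ell}$ in front of the $\ell$-th summand (as in the paper's formula for $\calR_i$), and these weights matter for the trace. Second, and more seriously: after cyclically rotating the $\ell$-th summand, the matrix multiplying $\widetilde E_{i+\ell}$ is the product of the $N-1$ matrices $\frakB_{i+\ell+1}(0),\dots,\frakB_{i+\ell+N-1}(0)$ --- one full period \emph{with the factor $\frakB_{i+\ell}(0)$ removed} --- not the full-period product $\frakX_{i+\ell+1}(0)$. It equals $\sigma\,\frakB_{i+\ell}^{-1}(0)$, not $\sigma\Id$. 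Your collapse to $\sigma\,\tr\widetilde E_{i+\ell}$ would give $\sigma\sum_{\ell}(z_{i+\ell}+x)$, which is $x$-dependent and does not produce the claimed answer; you noticed the mismatch and retreated to an unexecuted alternative (``use the known expansion of $\tr X_{kN+i}$''), which leaves the main identity unproved.

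Both defects are repairable within your framework. With the correct weights, each summand of $\tr\calR_i$ is $\sigma\,\frac{\alpha_{i-1}}{\alpha_{i+\ell}}\tr\big(\widetilde E_{i+\ell}\,\frakB_{i+\ell}^{-1}(0)\big)$; since $\frakB_n^{-1}(0)$ has second column $\big(-\alpha_n/\alpha_{n-1},\,0\big)^{T}$ and $\widetilde E_n$ has first row zero, this trace equals $-s_{i+\ell}\,\alpha_{i+\ell}/\alpha_{i+\ell-1}$ (the $z_n$ and $x$ contributions drop out automatically), yielding $\tr\calR_i=-\sigma\,\alpha_{i-1}\sum_{\ell=0}^{N-1}s_{i+\ell}/\alpha_{i+\ell-1}$. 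The last step, identifying this with $-\sigma\lim_k\big(a_{(k+1)N+i-1}-a_{kN+i-1}\big)$, you only assert; it follows by setting $c_n=a_n/\alpha_n$, noting $c_n-c_{n-1}\to s_n/\alpha_{n-1}$ from the hypothesis, and telescoping over one period. The paper itself reaches the same endpoint by citing an explicit formula for $\calR_i$ from earlier work and evaluating the rotated $(N-1)$-fold product through the periodic orthogonal polynomials; your route is more elementary in spirit but needs the two corrections above to be a proof.
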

\begin{proof}
	According to \cite[Proposition 9]{PeriodicIII}, we have
	\begin{equation} \label{eq:102a}
		\calR_i(x) = \alpha_{i-1} \calC_i(x)+ \alpha_{i-1} \calD_i
	\end{equation}
	where 
	\[
		\calC_i(x) = x
		\begin{pmatrix}
			-\frac{\alpha_{i-1}}{\alpha_i} \Big(\frakp^{[i+1]}_{N-2}\Big)'(0) & \Big(\frakp^{[i]}_{N-1}\Big)'(0) \\
			-\frac{\alpha_{i-1}}{\alpha_i} \Big(\frakp^{[i+1]}_{N-1}\Big)'(0) & \Big(\frakp^{[i]}_{N}\Big)'(0)
		\end{pmatrix}
	\]
	and
	\begin{equation} 
		\label{eq:100}
		\calD_i = 
			\sum_{j=0}^{N-1}
			\frac{1}{\alpha_{i+j}}
			\left\{
				\prod_{m=j+1}^{N-1} \frakB_{i+m} (0)
			\right\}
			\begin{pmatrix}
				0 & 0 \\
				s_{i+j} & z_{i+j}
			\end{pmatrix}
			\left\{
				\prod_{m=0}^{j-1} \frakB_{i+m} (0)
			\right\}.
	\end{equation}
	In view of \cite[Proposition 6]{PeriodicIII}, 
	\begin{equation}
		\label{eq:102}
		\tr \calC_i \equiv 0.
	\end{equation}
	Since the trace is linear and invariant under cyclic permutations, by \eqref{eq:100} we get
	\begin{equation} 
		\label{eq:101}
		\tr \calD_i =
		\sum_{j=0}^{N-1}
			\frac{1}{\alpha_{i+j}}
			\tr \left\{
			\begin{pmatrix}
				0 & 0 \\
				s_{i+j} & z_{i+j}
			\end{pmatrix}
				\prod_{m=j+1}^{N+ j-1} \frakB_{i+m} (0)
			\right\}.
	\end{equation}
	Using \cite[Proposition 3]{PeriodicIII}
	\[
		\prod_{m=j+1}^{N+j-1} \frakB_{i+m} (0) =
		\begin{pmatrix}
			-\frac{\alpha_{i+j}}{\alpha_{i+j+1}} \frakp^{[i+j+2]}_{N-3}(0) &
			\frakp^{[i+j+1]}_{N-2}(0) \\
			-\frac{\alpha_{i+j}}{\alpha_{i+j+1}} \frakp^{[i+j+2]}_{N-2}(0) &
			\frakp^{[i+j+1]}_{N-1}(0)
		\end{pmatrix},
	\]
	thus
	\begin{align}
		\nonumber
		\tr \left\{
			\begin{pmatrix}
				0 & 0 \\
				s_{i+j} & z_{i+j}
			\end{pmatrix}
			\prod_{m=j+1}^{N+ j-1} \frakB_{i+m} (0)
		\right\}
		&=
		s_{i+j} \frakp^{[i+j+1]}_{N-2}(0) + z_{i+j} \frakp^{[i+j+1]}_{N-1}(0) \\
		\label{eq:18}
		&=
		-\sigma \frac{\alpha_{i+j}}{\alpha_{i+j-1}} s_{i+j},
	\end{align}
	where the last equality follows by \cite[formula (13)]{PeriodicIII}. Inserting \eqref{eq:18} into
	\eqref{eq:101} results in
	\[
		\tr \calD_i = -\sigma
		\sum_{j=0}^{N-1} \frac{s_{i+j}}{\alpha_{i+j-1}}.
	\]
	Hence, by \eqref{eq:102a} and \eqref{eq:102}, we get
	\[
		\tr \calR_i = \alpha_{i-1} \tr \calD_i =
		-\sigma \alpha_{i-1}
		\sum_{j=0}^{N-1} \frac{s_{i+j}}{\alpha_{i+j-1}}.
	\]
	Finally, by \cite[Proposition 3]{christoffelII}, we obtain
	\[
		\tr \calR_i
		=
		-\sigma \lim_{k \to \infty} \big( a_{(k+1)N+i-1} - a_{kN+i-1} \big),
	\]
	which completes the proof.
\end{proof}

\subsection{Periodic blend}
\begin{definition}
	The Jacobi matrix $A$ associated to $(a_n : n \in \NN_0)$ and $(b_n : n \in \NN_0)$ has 
	\emph{asymptotically $N$-periodic entries} if there are two $N$-periodic sequences
	$(\alpha_n : n \in \ZZ)$ and $(\beta_n : n \in \ZZ)$ of positive and real numbers, respectively, such that
	\begin{enumerate}[(a), leftmargin=2em]
		\item 
		$\begin{aligned}[b]
			\lim_{n \to \infty} \big|a_n - \alpha_n\big| = 0
		\end{aligned}$,
		\item
		$\begin{aligned}[b]
			\lim_{n \to \infty} \big|b_n - \beta_n\big| = 0
		\end{aligned}$.
	\end{enumerate}
\end{definition}

\begin{definition}
	\label{def:3}
	The Jacobi matrix $A$ associated with sequences $(a_n : n \in \NN_0)$ and $(b_n : n \in \NN_0)$ has 
	a \emph{$N$-periodically blended entries} if there are an asymptotically $N$-periodic Jacobi matrix $\tilde{A}$
	associated with sequences $(\tilde{a}_n : n \in \NN_0)$ and $(\tilde{b}_n : n \in \NN_0)$, and a sequence of positive
	numbers $(\tilde{c}_n : n \in \NN_0)$, such that
	\begin{enumerate}[(a), leftmargin=2em]
		\item
		$\begin{aligned}[b]
			\lim_{n \to \infty} \tilde{c}_n = \infty, \qquad\text{and}\qquad
			\lim_{m \to \infty} \frac{\tilde{c}_{2m+1}}{\tilde{c}_{2m}} = 1
		\end{aligned}$,
		\item
		$\begin{aligned}[b]
			a_{k(N+2)+i} = 
			\begin{cases}
				\tilde{a}_{kN+i} & \text{if } i \in \{0, 1, \ldots, N-1\}, \\
				\tilde{c}_{2k} & \text{if } i = N, \\
				\tilde{c}_{2k+1} & \text{if } i = N+1,
			\end{cases}
		\end{aligned}$
		\item
		$\begin{aligned}[b]
			b_{k(N+2)+i} = 
			\begin{cases}
				\tilde{b}_{kN+i} & \text{if } i \in \{0, 1, \ldots, N-1\}, \\
				0 & \text{if } i \in \{N, N+1\}.
			\end{cases}
		\end{aligned}$
	\end{enumerate}
\end{definition}
If $A$ is a Jacobi matrix having $N$-periodically blended entries, we set
\[
	X_n(x) = \prod_{j = n}^{N+n+1} B_j(x).
\]
By \cite[Proposition 3.12]{ChristoffelI}, for each $i \in \{1, 2, \ldots, N-1\}$,
\[
	\lim_{j \to \infty} B_{j(N+2)+i}(x) = \frakB_i(x),
\]
locally uniformly with respect to $x \in \CC$, thus the sequence $(X_{j(N+2)+i}:j \in \NN)$
converges to $\calX_i$ locally uniformly on $\CC$ where
\begin{equation} \label{eq:32}
	\calX_i(x) = 
	\bigg( \prod_{j = 1}^{i-1} \frakB_j(x) \bigg) \calC(x) \bigg(\prod_{j = i}^{N-1} \frakB_j(x) \bigg),
\end{equation}
and
\[
	\calC(x) = 
	\begin{pmatrix}
		0 & -1 \\
		\frac{\alpha_{N-1}}{\alpha_0} & -\frac{2x-\beta_0}{\alpha_0}
	\end{pmatrix}.
\]
Moreover, we have the following proposition.
\begin{proposition}
	\label{prop:1}
	\begin{align}
		\label{eq:21a}
		\lim_{j \to \infty} B^{-1}_{j(N+2)}(x) &= 
		\begin{pmatrix}
			0 & 0 \\
			1 & 0
		\end{pmatrix}, \\
		\label{eq:21b}
		\lim_{j \to \infty} B_{j(N+2)+N}(x) &=
		\begin{pmatrix}
			0 & 1 \\
			0 & 0
		\end{pmatrix}, \\
		\label{eq:21c}
		\lim_{j \to \infty} B_{j(N+2)+N+1}(x) &=
		\begin{pmatrix}
			0 & 1 \\
			-1 & 0
		\end{pmatrix}.
	\end{align}
	locally uniformly with respect to $x \in \CC$.
\end{proposition}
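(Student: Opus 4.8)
My approach is a direct computation: for each of the three indices I would read off the relevant Jacobi parameters from Definition~\ref{def:3}, substitute them into the transfer matrix, and pass to the limit. First I would record the closed form of the inverse transfer matrix. Since $\det B_j(x) = a_{j-1}/a_j$, we have
\[
	B_j^{-1}(x) =
	\begin{pmatrix}
		\dfrac{x - b_j}{a_{j-1}} & -\dfrac{a_j}{a_{j-1}} \\[2mm]
		1 & 0
	\end{pmatrix}.
\]

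Next I would unwind the periodic blocking. For $m = j(N+2)$, Definition~\ref{def:3}(b)--(c) gives $a_m = \tilde{a}_{jN}$, $b_m = \tilde{b}_{jN}$, while $m-1 = (j-1)(N+2) + (N+1)$, so $a_{m-1} = \tilde{c}_{2j-1}$. For $m = j(N+2) + N$ one gets $a_m = \tilde{c}_{2j}$, $b_m = 0$ and $a_{m-1} = \tilde{a}_{jN+N-1}$. For $m = j(N+2) + N + 1$ one gets $a_m = \tilde{c}_{2j+1}$, $b_m = 0$ and $a_{m-1} = \tilde{c}_{2j}$. Since $\tilde{A}$ is asymptotically $N$-periodic and $\alpha$, $\beta$ are $N$-periodic, the sequences $\tilde{a}_{jN} \to \alpha_0$, $\tilde{b}_{jN} \to \beta_0$, $\tilde{a}_{jN+N-1} \to \alpha_{N-1}$ are bounded, whereas $\tilde{c}_n \to \infty$ and $\tilde{c}_{2j}/\tilde{c}_{2j+1} \to 1$ by hypothesis (the reciprocal of $\tilde{c}_{2m+1}/\tilde{c}_{2m} \to 1$). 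Substituting these into $B_m^{-1}(x)$, resp.\ $B_m(x)$, yields
\[
	B_{j(N+2)}^{-1}(x) =
	\begin{pmatrix}
		\dfrac{x - \tilde{b}_{jN}}{\tilde{c}_{2j-1}} & -\dfrac{\tilde{a}_{jN}}{\tilde{c}_{2j-1}} \\[2mm]
		1 & 0
	\end{pmatrix},
	\quad
	B_{j(N+2)+N}(x) =
	\begin{pmatrix}
		0 & 1 \\[2mm]
		-\dfrac{\tilde{a}_{jN+N-1}}{\tilde{c}_{2j}} & \dfrac{x}{\tilde{c}_{2j}}
	\end{pmatrix},
	\quad
	B_{j(N+2)+N+1}(x) =
	\begin{pmatrix}
		0 & 1 \\[2mm]
		-\dfrac{\tilde{c}_{2j}}{\tilde{c}_{2j+1}} & \dfrac{x}{\tilde{c}_{2j+1}}
	\end{pmatrix},
\]
and letting $j \to \infty$ gives exactly the right-hand sides of \eqref{eq:21a}, \eqref{eq:21b}, \eqref{eq:21c}. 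In each case the only entry depending on $x$ carries a factor $1/\tilde{c}_{2j-1}$, $1/\tilde{c}_{2j}$ or $1/\tilde{c}_{2j+1}$ tending to $0$, so the convergence is uniform on every compact subset of $\CC$, i.e.\ locally uniform, as claimed.

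There is no serious obstacle here; the only point requiring care is the bookkeeping of the blocking in Definition~\ref{def:3}. In particular one must notice that $a_{j(N+2)-1}$ sits in the $(j-1)$-st block, hence equals $\tilde{c}_{2j-1}$ and not $\tilde{c}_{2j+1}$, and one must track the even/odd parity of the index of $\tilde{c}$ so that the hypothesis $\tilde{c}_{2m+1}/\tilde{c}_{2m} \to 1$ can be applied to produce the entry $-1$ in \eqref{eq:21c}.
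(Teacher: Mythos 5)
Your proof is correct and follows essentially the same route as the paper's: write out $B_{j(N+2)}^{-1}(x)$, $B_{j(N+2)+N}(x)$, $B_{j(N+2)+N+1}(x)$ explicitly from Definition~\ref{def:3} and pass to the limit using $\tilde{c}_n \to \infty$, the boundedness of $(\tilde{a}_n)$ and $(\tilde{b}_n)$, and $\tilde{c}_{2j}/\tilde{c}_{2j+1} \to 1$. The matrices you obtain coincide with those in the paper, and your bookkeeping of the block indices (in particular $a_{j(N+2)-1} = \tilde{c}_{2j-1}$) is exactly right.
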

\begin{proof}
	The proposition easily follows from Definition \ref{def:3}. Indeed, we have
	\[
		B_{j(N+2)}^{-1}(x) = 
		\begin{pmatrix}
			\frac{x-\tilde{b}_{jN}}{\tilde{c}_{2j-1}} & -\frac{\tilde{a}_{jN}}{\tilde{c}_{2j-1}} \\
			1 & 0
		\end{pmatrix},
	\]
	and
	\[
		B_{j(N+2)+N}(x) 
		=
		\begin{pmatrix}
			0 & 1 \\
			-\frac{\tilde{a}_{jN+N-1}}{\tilde{c}_{2j}} & \frac{x}{\tilde{c}_{2j}}
		\end{pmatrix},
		\qquad
		B_{j(N+2)+N+1}(x)
		=
		\begin{pmatrix}
			0 & 1 \\
			-\frac{\tilde{c}_{2j}}{\tilde{c}_{2j+1}} & \frac{x}{\tilde{c}_{2j+1}}
		\end{pmatrix}.
	\]
	Thus using Definition \ref{def:3}(i) and boundedness of the sequence $(\tilde{a}_n : n \in \NN_0)$, we can compute
	the limits.
\end{proof}

\section{Stolz class}
\label{sec:stolz}
In this section we define a proper class of slowly oscillating sequences which is motivated by \cite{Stolz1994},
see also \cite[Section 2]{SwiderskiTrojan2019}.
Let $X$ be a normed space. We say that a bounded sequence $(x_n)$ belongs to $\calD_{r, s}(X)$ for certain $r \in \NN$
and $s \in \{0, 1, \ldots, r-1\}$, if for each $j \in \{1, \ldots, r-s\}$,
\[
	\sum_{n=1}^\infty \big\|\Delta^j x_n \big\|^{\frac{r}{j+s}} < \infty.
\]
Moreover, $(x_n) \in \calD_{r,s }^0(X)$, if $(x_n) \in \calD_{r,s}(X)$ and
\[
	\sum_{n=1}^\infty \|x_n\|^{\frac{r}{s}} < \infty.
\]
Let us observe that 
\[
	\calD_{r, s}(X) \subset \calD_{r, 0}(X),
	\qquad\text{and}\qquad
	\calD_{r, r-1}(X) = \calD_{1, 0}(X).
\]
To simplify the notation, if $X$ is the real line with Euclidean norm we shortly write $\calD_{r, s} 
= \calD_{r,s}(\RR)$. Given a compact set $K \subset \CC$ and a normed vector space $R$, by $\calD_{r, s}(K, R)$ we denote
the case when $X$ is the space of all continuous mappings from $K$ to $R$ equipped with the supremum norm. Moreover, given
a positive integer $N$, we say that $(x_n) \in \calD^N_{r, s}(X)$ if for each $i \in \{0, 1, \ldots, N-1 \}$,
\[
	 \big( x_{nN+i} : n \in \NN \big) \in \calD_{r,s}(X).
\]
\begin{lemma} 
	\label{lem:3}
	Let $d$ and $M$ be positive integers, and let $K_0 \subset \CC$ be a set with at least $M+1$ points. 
	Suppose that $(x_n : n \in \NN)$ is a sequence of elements from $\Mat(d, \PP_M)$ where $\PP_M$ denotes the
	linear space of complex polynomials of degree at most $M$. If there are $r \geq 1$ and $s \in \{0, 1, \ldots, r-1\}$
	so that for all $z \in K_0$,
	\[
		\big( x_n(z) : n \in \NN \big) \in \calD_{r,s} \big(\Mat(d, \CC) \big),
	\]
	then for every compact set $K \subset \CC$,
	\[
		\big( x_n : n \in \NN \big) \in \calD_{r,s} \big(K, \Mat(d, \CC) \big).
	\]
\end{lemma}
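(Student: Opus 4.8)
The plan is to exploit the finite-dimensionality of $\Mat(d, \PP_M)$ together with equivalence of norms on finite-dimensional spaces, converting the pointwise hypothesis at $M+1$ distinct points into a statement about the coefficients of the polynomial-valued entries, and then transferring everything back to the supremum norm on an arbitrary compact $K$.

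First I would fix one entry of the matrix, so that without loss of generality we may work with a scalar sequence $(x_n : n \in \NN)$ in $\PP_M$; the matrix case follows by treating each of the $d^2$ entries separately (a finite sum of finite conditions). Next, pick $M+1$ distinct points $z_0, z_1, \ldots, z_M \in K_0$. The evaluation map $\PP_M \ni p \mapsto (p(z_0), \ldots, p(z_M)) \in \CC^{M+1}$ is a linear isomorphism, by Lagrange interpolation (its matrix is Vandermonde, hence invertible). Since $\PP_M$ is finite-dimensional, all norms on it are equivalent; in particular, the supremum norm $\|p\|_K = \sup_{z \in K} |p(z)|$ on any fixed compact $K$ is equivalent to the norm $p \mapsto \max_{0 \le \ell \le M} |p(z_\ell)|$. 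Crucially, the difference operator $\Delta^j$ acts entrywise and commutes with evaluation: $(\Delta^j x_n)(z_\ell) = \Delta^j (x_n(z_\ell))$, so $\|\Delta^j x_n\|_K \asymp \max_{\ell} |\Delta^j (x_n(z_\ell))|$, with constants depending only on $K$, $M$, and the chosen points — not on $n$.

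With this equivalence in hand, the conclusion is nearly immediate. Boundedness of $(x_n)$ in $\|\cdot\|_K$ follows from boundedness of each $(x_n(z_\ell))$. For the summability conditions, for each $j \in \{1, \ldots, r-s\}$ we have
\[
	\sum_{n=1}^\infty \|\Delta^j x_n\|_K^{\frac{r}{j+s}}
	\lesssim
	\sum_{n=1}^\infty \Big( \max_{0 \le \ell \le M} |\Delta^j(x_n(z_\ell))| \Big)^{\frac{r}{j+s}}
	\le
	\sum_{\ell=0}^M \sum_{n=1}^\infty \big| \Delta^j(x_n(z_\ell)) \big|^{\frac{r}{j+s}}
	< \infty,
\]
where the last inequality uses the hypothesis $(x_n(z_\ell)) \in \calD_{r,s}(\CC)$ for each of the finitely many $\ell$, and the middle step uses $(\max_\ell t_\ell)^q \le \sum_\ell t_\ell^q$ for $t_\ell \ge 0$, $q = r/(j+s) > 0$. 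This shows $(x_n) \in \calD_{r,s}(K, \CC)$, and reassembling the $d^2$ entries gives $(x_n) \in \calD_{r,s}(K, \Mat(d,\CC))$.

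The only mild subtlety — and the step I would be most careful about — is the uniformity of the norm-equivalence constants: one must make sure the implied constants in $\|\Delta^j x_n\|_K \asymp \max_\ell |\Delta^j(x_n(z_\ell))|$ are independent of $n$. This is automatic, since $\Delta^j x_n$ lies in the fixed finite-dimensional space $\PP_M$ and the equivalence is between two fixed norms on that space; the constants depend only on $M$, the points $z_0, \ldots, z_M$, and $K$. Everything else is routine bookkeeping with finite sums.
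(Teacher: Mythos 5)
Your proof is correct and follows essentially the same route as the paper: both arguments rest on Lagrange interpolation at $M+1$ distinct points of $K_0$, the commutation of $\Delta^j$ with evaluation, and the resulting bound $\sup_{z\in K}\|\Delta^j x_n(z)\| \le c\sum_{\ell}\|\Delta^j x_n(z_\ell)\|$ with constants independent of $n$. The framing via equivalence of norms on the finite-dimensional space $\PP_M$ is just a repackaging of the same estimate.
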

\begin{proof}
	Let $\{ z_0, z_1, \ldots, z_M \}$ be a subset of $K_0$ consisting of distinct points. By the Lagrange interpolation
	formula, we can write
	\[
		x_n(z) = \sum_{j=0}^M \ell_j(z) x_n(z_j)
	\]
	where
	\[
		\ell_j(z) = 
		\prod_{\stackrel{m = 0}{m \neq j}}^M
		\frac{z - z_m}{z_j - z_m}.
	\]
	Let $K$ be a compact subset of $\CC$. Then there is a constant $c>0$ such that for any $j \in \{0, 1, \ldots, M \}$,
	\[
		\sup_{z \in K} |\ell_j(z)| \leq c.
	\]
	Since, for each $k \geq 0$,
	\[
		\Delta^k x_n(z) = \sum_{j=0}^M \ell_j(z) \Delta^k x_n(z_j),
	\]
	we obtain
	\[
		\sup_{z \in K} \big\| \Delta^k x_n(z) \big\| 
		\leq 
		c \sum_{j=0}^M \big\| \Delta^k x_n(z_j) \big\|,
	\]
	and the conclusion follows.
\end{proof}

The following lemma is well-known and its proof is straightforward.
\begin{lemma}
	\label{lem:1}
	For any two sequences $(x_n)$ and $(y_n)$, and $j \in \NN$,
	\[
		\Delta^j(x_n y_n : n \in \NN)_n = \sum_{k = 0}^j {j \choose k} \Delta^{j-k}x_n \cdot
		\Delta^k y_{n + j - k}.
	\]
\end{lemma}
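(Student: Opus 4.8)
The statement to prove is Lemma~\ref{lem:1}, a discrete Leibniz-type product rule for the iterated difference operator $\Delta^j$.

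\textbf{Approach.} The plan is a straightforward induction on $j$, exactly as one proves the classical Leibniz rule for derivatives. The base case $j=1$ is just the identity $\Delta(x_n y_n) = x_{n+1}y_{n+1} - x_n y_n = x_{n+1}(y_{n+1}-y_n) + (x_{n+1}-x_n)y_n = \Delta x_n \cdot y_{n+1} + x_n \cdot \Delta y_n$, which matches the claimed formula $\sum_{k=0}^1 \binom{1}{k} \Delta^{1-k} x_n \cdot \Delta^k y_{n+1-k}$ since the $k=0$ term is $\Delta x_n \cdot y_{n+1}$ and the $k=1$ term is $x_n \cdot \Delta y_n$. (Note the shift $y_{n+j-k}$ in the statement: for $k<j$ the second factor is evaluated at a shifted index, while for $k=j$ it is at $n$.)

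\textbf{Inductive step.} Assuming the formula holds for $j$, I would write $\Delta^{j+1}(x_n y_n) = \Delta\bigl(\Delta^j(x_n y_n)\bigr) = \Delta\Bigl(\sum_{k=0}^j \binom{j}{k} \Delta^{j-k} x_n \cdot \Delta^k y_{n+j-k}\Bigr)$, then apply the $j=1$ case to each product $\Delta^{j-k}x_n \cdot \Delta^k y_{n+j-k}$ inside the sum. This yields two sums: one where $\Delta$ hits the first factor (raising $\Delta^{j-k}x$ to $\Delta^{j-k+1}x$ and shifting the $y$-factor up by one to $\Delta^k y_{n+j-k+1} = \Delta^k y_{n+(j+1)-k}$), and one where $\Delta$ hits the second factor (raising $\Delta^k y$ to $\Delta^{k+1} y$ at index $n+j-k = n+(j+1)-(k+1)$, leaving the first factor unshifted). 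Reindexing the second sum ($k \mapsto k-1$) and combining with the first, the Pascal identity $\binom{j}{k} + \binom{j}{k-1} = \binom{j+1}{k}$ produces exactly $\sum_{k=0}^{j+1} \binom{j+1}{k} \Delta^{j+1-k}x_n \cdot \Delta^k y_{n+(j+1)-k}$, closing the induction.

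\textbf{Main obstacle.} There is no real obstacle; the only thing requiring care is bookkeeping the index shifts so that after applying the one-step rule the $y$-arguments line up as $n+(j+1)-k$ in both pieces, and checking the boundary terms $k=0$ and $k=j+1$ separately (where one of the two contributions is absent). Since the paper itself calls this ``well-known'' with a ``straightforward'' proof, I would keep the write-up short, essentially just recording the induction and the Pascal-rule collapse.

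\begin{proof}
	We argue by induction on $j$. For $j=1$,
	\[
		\Delta(x_n y_n) = x_{n+1} y_{n+1} - x_n y_n = (\Delta x_n) y_{n+1} + x_n (\Delta y_n),
	\]
	which is the asserted formula. Assume the formula holds for some $j \geq 1$. Applying $\Delta$ and using the case $j=1$ to each summand,
	\begin{align*}
		\Delta^{j+1}(x_n y_n)
		&= \sum_{k=0}^j {j \choose k} \Delta\big( \Delta^{j-k} x_n \cdot \Delta^k y_{n+j-k} \big) \\
		&= \sum_{k=0}^j {j \choose k} \Delta^{j-k+1} x_n \cdot \Delta^k y_{n+j-k+1}
		 + \sum_{k=0}^j {j \choose k} \Delta^{j-k} x_n \cdot \Delta^{k+1} y_{n+j-k}.
	\end{align*}
	In the first sum the $y$-argument equals $n + (j+1) - k$, and reindexing the second sum by $k \mapsto k-1$ its $y$-argument becomes $\Delta^k y_{n+j-(k-1)} = \Delta^k y_{n+(j+1)-k}$. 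Hence
	\[
		\Delta^{j+1}(x_n y_n)
		= \sum_{k=0}^{j+1} \left( {j \choose k} + {j \choose k-1} \right) \Delta^{j+1-k} x_n \cdot \Delta^k y_{n+(j+1)-k},
	\]
	with the convention $\binom{j}{-1} = \binom{j}{j+1} = 0$. By Pascal's rule $\binom{j}{k} + \binom{j}{k-1} = \binom{j+1}{k}$, which completes the induction.
\end{proof}
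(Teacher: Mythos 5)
Your proof is correct: the base case, the application of the one-step product rule to each summand, the index shift $n+j-k+1 = n+(j+1)-k$, the reindexing, and the Pascal-rule collapse all check out. The paper omits the proof entirely (calling the lemma well-known and straightforward), and your induction is exactly the standard argument that is being alluded to.
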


\begin{corollary}[{\cite[Corollary 1]{SwiderskiTrojan2019}}]
	\label{cor:1}
	Let $r \in \NN$ and $s \in \{0, \ldots, r-1\}$. 
	\begin{enumerate}[(i), leftmargin=2em]
		\item If $(x_n) \in \calD_{r, 0}(X)$ and $(y_n) \in \calD_{r, s}^0(X)$ then
		$(x_n y_n) \in \calD_{r, s}^0(X)$.
		\item If $(x_n), (y_n) \in \calD_{r, s}(X)$, then $(x_n y_n) \in \calD_{r, s}(X)$.
	\end{enumerate}
\end{corollary}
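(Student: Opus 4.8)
The plan is to reduce both statements to a single computation using the Leibniz-type formula in Lemma~\ref{lem:1}, together with H\"older's inequality and the elementary fact that the convergence of $\sum_n \|\Delta^j x_n\|^{r/j}$ for $j = 1, \dots, r$ forces faster decay for lower-order differences via the discrete interpolation inequality $\|\Delta^j x_n\| \lesssim \|\Delta^{j+1}\|\text{-tails} + \dots$; in practice one only needs the boundedness of $(x_n)$ and $(y_n)$ together with summability of the difference powers. First I would recall from Lemma~\ref{lem:1} that
\[
	\Delta^j (x_n y_n) = \sum_{k=0}^j \binom{j}{k} \Delta^{j-k} x_n \cdot \Delta^k y_{n+j-k},
\]
so it suffices to bound each summand. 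For part (ii), fix $j \in \{1, \dots, r-s\}$; I want $\sum_n \|\Delta^{j-k} x_n \cdot \Delta^k y_{n+j-k}\|^{r/(j+s)} < \infty$ for each $k \in \{0, \dots, j\}$. When $k = 0$ or $k = j$ one factor is just a bounded sequence, and the other factor has the required summability by hypothesis, so these terms are immediate. For the genuinely mixed terms $1 \le k \le j-1$, split the exponent as $\frac{r}{j+s} = \frac{r}{(j-k)+s}\cdot\theta + \frac{r}{k+s}\cdot(1-\theta)$-type bookkeeping; the clean way is H\"older with exponents $p = \frac{j+s}{j-k+s'}$ chosen so that $\|\Delta^{j-k}x_n\|^{r/(j+s)}$ sits in $L^p$ and $\|\Delta^k y_{n+j-k}\|^{r/(j+s)}$ sits in $L^{p'}$, matching the single-factor summability exponents $r/((j-k)+s)$ and $r/(k+s)$ respectively. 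One checks $1/p + 1/p' = 1$ reduces to the identity $(j-k+s) \cdot \tfrac{1}{?} + (k+s)\cdot\tfrac{1}{?}$ — i.e. the arithmetic works precisely because $(j-k+s)+(k+s) - s = j + s$, which is the exponent denominator on the left — so H\"older applies and each mixed term is a product of two convergent series' worth of summability.

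For part (i), the argument is the same but with $s$ in the role of the ``extra summability'' slot: now $(y_n) \in \calD_{r,s}^0(X)$ additionally gives $\sum_n \|y_n\|^{r/s} < \infty$, which handles the $k = 0$ term (where the factor $\Delta^j x_n$ is merely bounded, not small, so we must spend all the decay on the $y$-side: $\|\Delta^j x_n \cdot y_n\|^{r/(j+s)} \lesssim \|y_n\|^{r/(j+s)}$, and since $r/(j+s) \ge r/s$ fails in general — wait, it holds that $j+s \ge s$ so $r/(j+s) \le r/s$, hence $\sum \|y_n\|^{r/(j+s)}$ converges because $\|y_n\|$ is bounded and $\sum \|y_n\|^{r/s}$ converges, giving the lower power for free). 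The $k = j$ term uses boundedness of $y$ and the $\calD_{r,0}$ summability of $\Delta^j x_n$; the mixed terms are handled by H\"older exactly as before, now interpolating between $\sum \|\Delta^{j-k} x_n\|^{r/(j-k)}$ (available since $\calD_{r,0} \supset$ the hypothesis on $x$) and the $y$-side summability, and one also has to verify the extra summability condition $\sum \|x_n y_n\|^{r/s} < \infty$, which follows from boundedness of $x$ and $\sum \|y_n\|^{r/s} < \infty$.

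The main obstacle is purely bookkeeping: matching up the H\"older exponents so that the interpolation denominators $(j-k)+s$ and $k+s$ combine correctly to $j+s$, and being careful that in part~(i) the asymmetry between the hypotheses on $x$ (only $\calD_{r,0}$) and on $y$ (the stronger $\calD_{r,s}^0$) is exploited in the right direction — i.e. all ``missing'' decay is charged to the $y$-factor. Since this corollary is quoted verbatim from \cite[Corollary 1]{SwiderskiTrojan2019}, I would in fact simply cite that reference and omit the computation, but the sketch above is the proof one would reconstruct.
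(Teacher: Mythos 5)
The paper does not actually prove this corollary; it simply cites \cite[Corollary 1]{SwiderskiTrojan2019}, and your overall strategy --- the Leibniz rule of Lemma~\ref{lem:1} followed by H\"older's inequality on each summand --- is exactly the standard argument behind that reference. Part (ii) is fine, as are the mixed terms $1\le k\le j-1$ in part (i): there the exponents $p=\tfrac{j+s}{j-k}$ and $q=\tfrac{j+s}{k+s}$ are exactly conjugate, while in part (ii) one gets $\tfrac1p+\tfrac1q=\tfrac{j+2s}{j+s}\ge 1$, which is more than enough because $\ell^p\subset\ell^q$ for $p\le q$.

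However, your treatment of the $k=0$ term in part (i) contains a genuine error. You bound $\|\Delta^j x_n\cdot y_{n+j}\|^{r/(j+s)}\lesssim\|y_{n+j}\|^{r/(j+s)}$ and claim summability because $(y_n)$ is bounded, $\sum_n\|y_n\|^{r/s}<\infty$, and $\tfrac{r}{j+s}\le\tfrac{r}{s}$. That implication goes the wrong way: for a bounded sequence $c_n\le C$, summability of $\sum_n c_n^{A}$ transfers to \emph{larger} exponents $B\ge A$ (since $c_n^{B}\le C^{B-A}c_n^{A}$), not to smaller ones. Here $\tfrac{r}{j+s}<\tfrac{r}{s}$, so $\sum_n\|y_n\|^{r/(j+s)}$ may diverge (take $\|y_n\|\sim n^{-s/r-\epsilon}$ with $\epsilon$ small). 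The fix is not to discard the decay of $\Delta^j x_n$: since $(x_n)\in\calD_{r,0}$ you also have $\sum_n\|\Delta^j x_n\|^{r/j}<\infty$, and H\"older with the conjugate pair $\bigl(\tfrac{j+s}{j},\tfrac{j+s}{s}\bigr)$ applied to $\|\Delta^j x_n\|^{r/(j+s)}\,\|y_{n+j}\|^{r/(j+s)}$ yields
\[
	\sum_{n}\big\|\Delta^j x_n\cdot y_{n+j}\big\|^{\frac{r}{j+s}}
	\le
	\Big(\sum_{n}\|\Delta^j x_n\|^{\frac{r}{j}}\Big)^{\frac{j}{j+s}}
	\Big(\sum_{n}\|y_{n+j}\|^{\frac{r}{s}}\Big)^{\frac{s}{j+s}}<\infty .
\]
With this correction (and the observation, which you do make, that $\sum_n\|x_ny_n\|^{r/s}<\infty$ by boundedness of $(x_n)$) the argument closes.
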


\begin{lemma}[{\cite[Lemma 2]{SwiderskiTrojan2019}}]
	\label{lem:2}
	Fix $r \in \NN$, $s \in \{0, \ldots, r-1\}$ and a compact set $K \subseteq \RR$. Let 
	$(f_n : n \in \NN) \in \calD_{r, s}(K, \RR)$ be a sequence of real functions on $K$ with values in $I \subseteq \RR$
	and let $F \in \calC^{r-s}(I, \RR)$. Then $(F \circ f_n : n \in \NN) \in \calD_{r, s}(K, \RR)$.
\end{lemma}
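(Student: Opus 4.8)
The plan is to reduce the statement to the one-variable chain rule applied pointwise, combined with the closure properties of Stolz classes under products already recorded in Corollary~\ref{cor:1}. Fix $r$, $s$ and the compact set $K$, and let $(f_n) \in \calD_{r,s}(K,\RR)$ take values in $I$ with $F \in \calC^{r-s}(I,\RR)$. Since $(f_n)$ is bounded in the supremum norm on $K$, all the values $f_n(z)$ lie in a fixed compact subinterval $I_0 \subset I$; on $I_0$ the derivatives $F', F'', \ldots, F^{(r-s)}$ are bounded, and moreover $F^{(r-s)}$ is uniformly continuous. The first observation is that $(F \circ f_n)$ is automatically bounded, so it remains to control the iterated differences $\Delta^j(F \circ f_n)$ for $j \in \{1, \ldots, r-s\}$.

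The main step is a discrete Fa\`a di Bruno / telescoping estimate for $\Delta^j(F\circ f_n)$. Writing $\Delta(F\circ f_n)(z) = F(f_{n+1}(z)) - F(f_n(z))$ and applying Taylor's theorem with integral remainder at the point $f_n(z)$, one gets
\[
	\Delta (F \circ f_n)(z) = \sum_{k=1}^{r-s-1} \frac{F^{(k)}(f_n(z))}{k!} \big(\Delta f_n(z)\big)^k + \rho_n(z),
\]
where the remainder satisfies $\sup_{z\in K}|\rho_n(z)| \lesssim \sup_{z\in K}|\Delta f_n(z)|^{r-s}$ thanks to boundedness of $F^{(r-s)}$ on $I_0$. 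The term $\rho_n$ is harmless: since $(\Delta f_n) \in \calD_{r,s}$ gives $\sum_n \sup_K |\Delta f_n|^{r} < \infty$ (the $j=1$ condition), a fortiori $\sum_n \sup_K |\Delta f_n|^{r/(1+s)} < \infty$, and $r-s \ge r/(1+s)$ is not quite what we need directly — instead one observes $\rho_n$ contributes to $\Delta^1$ with exponent $r/(1+s)$, and $\sum_n \sup_K|\Delta f_n|^{(r-s)\cdot r/(1+s)} < \infty$ follows because $(r-s) \cdot r/(1+s) \ge r$ whenever $s \le r-1$. The polynomial-in-$\Delta f_n$ terms: each summand $F^{(k)}(f_n) (\Delta f_n)^k / k!$ is a product of the bounded sequence $(F^{(k)}\circ f_n)$ — which lies in $\calD_{r,0}(K,\RR)$ by induction on the order, using Lemma~\ref{lem:2} for the lower-order derivative $F^{(k)} \in \calC^{r-s-k} \subset \calC^{\,\ge 0}$...

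Here I should be more careful, and this is the point I expect to be the genuine obstacle: one cannot naively invoke the lemma for $F^{(k)}$ because $F^{(k)}$ is only $\calC^{r-s-k}$, not $\calC^{r-s}$, so a clean induction must track the loss of smoothness against the shift in the second index of $\calD_{r,s}$. The right framework is: prove by induction on $j$ that $\Delta^j(F\circ f_n)$ is a finite sum of terms of the form $(G\circ f_n)\cdot \prod_{i} \Delta^{j_i}f_{n+m_i}$ with $G \in \calC^{r-s-1}$ bounded on $I_0$ and $\sum_i j_i = j$ (a discrete Fa\`a di Bruno formula, provable via Lemma~\ref{lem:1} and induction). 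Then Corollary~\ref{cor:1}(ii), applied repeatedly, shows each such product lies in $\calD_{r,s}(K,\RR)$ provided each factor $(\Delta^{j_i}f_{n+\cdot})$ does and $(G\circ f_n)$ lies in $\calD_{r,0}(K,\RR)$; the shifts $f_{n+m_i}$ are absorbed since finite translation preserves all $\calD_{r,s}$ classes. The summability $\sum_n \sup_K|\Delta^{j_i}f_n|^{r/(j_i+s)} < \infty$ is exactly the hypothesis on $(f_n)$, and H\"older's inequality with exponents $\{(j_i+s)/j_i \cdot \text{(suitable)}\}$ combines them to give the required bound $\sum_n \sup_K|\Delta^j(F\circ f_n)|^{r/(j+s)} < \infty$ because $\sum_i \frac{j_i+s}{r} \le \frac{j+s}{r}$ when $\sum_i j_i = j$ and there is at least one factor (the extra $+s$'s only help). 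Finally, $(G\circ f_n) \in \calD_{r,0}(K,\RR)$ is obtained from the inductive form of the lemma applied to $G \in \calC^{r-s-1} \subseteq \calC^{(r-1)-s}$ — i.e. one runs the whole argument by induction on $r$, the base case $r-s=1$ being the mean value theorem estimate $|\Delta(F\circ f_n)| \le \|F'\|_{\infty,I_0}\,|\Delta f_n|$, which is immediate.

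In summary: (1) reduce to a fixed compact value-interval $I_0$ and record bounds on $F^{(1)},\dots,F^{(r-s)}$; (2) establish a discrete Fa\`a di Bruno expansion for $\Delta^j(F\circ f_n)$ via Lemma~\ref{lem:1}; (3) set up a double induction on $r$ (smoothness budget) and on $j$, using Corollary~\ref{cor:1} to handle products and translation-invariance of $\calD_{r,s}$ to handle shifts; (4) close with a H\"older estimate matching the exponents $r/(j+s)$. The delicate bookkeeping is step (3): matching the smoothness class $\calC^{r-s}$ of $F$ against the two-parameter family $\calD_{r,s}$ so that the derivatives $F^{(k)}$, which live in lower smoothness classes, still land in $\calD_{r,0}$ rather than some class too weak to feed back into Corollary~\ref{cor:1}.
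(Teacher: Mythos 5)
First, a point of reference: the paper does not prove Lemma~\ref{lem:2} at all --- it is imported verbatim from \cite[Lemma 2]{SwiderskiTrojan2019} --- so there is no in-paper argument to compare yours against. Your roadmap (Taylor expansion of $F(f_{n+1})-F(f_n)$ at $f_n$, a discrete Fa\`a di Bruno expansion of $\Delta^j(F\circ f_n)$, the product rules of Corollary~\ref{cor:1}, induction on the smoothness budget with the mean value theorem as the base case $r-s=1$) is the right one and is in the spirit of the cited proof. But as written the argument has genuine gaps, and they sit exactly at the two places you yourself flag as delicate. Two of your quantitative claims are simply false or backwards. The $j=1$ condition of $\calD_{r,s}$ is $\sum_n\sup_K|\Delta f_n|^{r/(1+s)}<\infty$, not $\sum_n\sup_K|\Delta f_n|^{r}<\infty$, and your ``a fortiori'' runs in the wrong direction: for sequences with terms eventually $\leq 1$, a \emph{smaller} exponent is the \emph{stronger} condition. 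Likewise $(r-s)\cdot\frac{r}{1+s}\geq r$ fails in general (take $r=4$, $s=2$); the remainder term is controlled only because $(r-s)\cdot\frac{r}{1+s}\geq\frac{r}{1+s}$, which is what the hypothesis actually supplies. Your H\"older bookkeeping $\sum_i\frac{j_i+s}{r}\leq\frac{j+s}{r}$ is also reversed: with $k\geq 2$ factors and $s\geq 1$ one has $\sum_i(j_i+s)=j+ks>j+s$. The inequality that makes generalized H\"older work is $\sum_i(j_i+s)\geq j+s$, since one needs $\sum_i\frac{j_i+s}{j+s}\geq 1$ in order to choose conjugate exponents $q_i\geq\frac{j+s}{j_i+s}$ with $\sum_i q_i^{-1}=1$. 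The idea survives, but the step as stated does not parse.

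The more serious gap is structural. The claimed exact expansion of $\Delta^j(F\circ f_n)$ as a finite sum of terms $(G\circ f_n)\prod_i\Delta^{j_i}f_{n+m_i}$ with a single-variable $G$ is already false for $j=1$: $F(f_{n+1})-F(f_n)=G(f_n,f_{n+1})\,\Delta f_n$ for a \emph{two}-variable divided difference, not $G(f_n)\,\Delta f_n$. One must either carry divided-difference coefficients depending on several shifts of $f$, or insert a Taylor remainder at each level and arrange that it is never differenced again; you do neither. Your proposed repair --- an induction on $r$ invoking the lemma for $G\in\calC^{(r-1)-s}$ to place $G\circ f_n$ in $\calD_{r,0}$ --- does not close, because $\calD_{r,s}\not\subset\calD_{r-1,s}$: the exponents $\frac{r-1}{j+s}$ are smaller than $\frac{r}{j+s}$, so membership in $\calD_{r-1,s}$ is a strictly stronger requirement and the inductive hypothesis cannot be fed with the same sequence $(f_n)$. (In fact, for the final summability estimate the coefficient functions only need to be bounded, not to lie in any $\calD$ class --- but exploiting that requires the exact expansion you have not established.) In short: correct strategy, but the steps you label ``the genuine obstacle'' and ``the delicate bookkeeping'' are the entire content of the lemma, and they are not carried out.
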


By Lemma \ref{lem:2}, we easily get the following corollary.
\begin{corollary}
	\label{cor:2}
	Let $r \in \NN$. If $(x_n) \in \calD_{r, 0}(K, \CC)$, and
	\[
		0 < \delta \leq \abs{x_n(x)},
	\]
	for all $n \in \NN$ and $x \in K$, then $(x_n^{-1} : n \in \NN) \in \calD_{r, 0}(K, \CC)$.
\end{corollary}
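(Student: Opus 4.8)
The plan is to reduce the claim to Lemma~\ref{lem:2} by writing $x_n^{-1}$ as a composition $F \circ g_n$ for a suitable scalar‑valued auxiliary sequence $g_n$ and a function $F$ of class $\calC^{r}$ on an interval bounded away from $0$. The obstacle is that $x_n$ is complex‑valued, whereas Lemma~\ref{lem:2} is stated for \emph{real} functions; so first I would split $x_n = u_n + \mathrm{i} v_n$ into real and imaginary parts, observe that $u_n, v_n \in \calD_{r,0}(K, \RR)$ (taking real and imaginary parts is linear, hence preserves the $\calD_{r,0}$ condition), and then note that
\[
	x_n^{-1} = \frac{\overline{x_n}}{\abs{x_n}^2} = \frac{u_n}{u_n^2 + v_n^2} - \mathrm{i} \frac{v_n}{u_n^2 + v_n^2}.
\]
It therefore suffices to prove that the real sequence $(\abs{x_n}^{-2}) = (u_n^2 + v_n^2)^{-1}$ lies in $\calD_{r,0}(K, \RR)$, since then multiplying by $u_n$ or $v_n$ and using Corollary~\ref{cor:1}(ii) (with $s = 0$) finishes the argument.

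Next I would handle $(\abs{x_n}^{-2})$ as follows. The sequence $f_n := u_n^2 + v_n^2$ is in $\calD_{r,0}(K, \RR)$: indeed $u_n, v_n \in \calD_{r,0}(K,\RR)$, so by Corollary~\ref{cor:1}(ii) the products $u_n^2$ and $v_n^2$ are in $\calD_{r,0}(K,\RR)$, and the class is closed under addition (again because the defining sums of $\norm{\Delta^j \cdot}^{r/j}$ behave subadditively, up to the constant $2^{r/j}$). Moreover $f_n(x) = \abs{x_n(x)}^2 \geq \delta^2 > 0$ for all $n$ and $x \in K$, and $f_n$ is bounded since $(x_n)$ is bounded; hence $f_n$ takes values in a compact interval $I = [\delta^2, M^2] \subset (0, \infty)$. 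The function $F(t) = 1/t$ belongs to $\calC^\infty(I, \RR) \subseteq \calC^{r}(I,\RR)$, so Lemma~\ref{lem:2} (applied with $s = 0$, for which $\calC^{r-s} = \calC^r$) gives $(F \circ f_n) = (\abs{x_n}^{-2}) \in \calD_{r,0}(K, \RR)$.

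Finally I would assemble the pieces: $u_n \cdot \abs{x_n}^{-2}$ and $v_n \cdot \abs{x_n}^{-2}$ are in $\calD_{r,0}(K, \RR)$ by Corollary~\ref{cor:1}(ii), whence their $\CC$‑linear combination $x_n^{-1} = u_n \abs{x_n}^{-2} - \mathrm{i} v_n \abs{x_n}^{-2}$ is in $\calD_{r,0}(K, \CC)$, which is exactly the assertion. The only genuinely delicate point is the bookkeeping needed to see that $\calD_{r,0}(K,\RR)$ is preserved under taking real/imaginary parts and under addition; both are routine consequences of linearity of $\Delta^j$ and of the inequality $(a+b)^{r/j} \leq 2^{r/j}(a^{r/j} + b^{r/j})$, so no serious difficulty is expected. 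Alternatively, one could avoid splitting into real and imaginary parts altogether by invoking a complex analogue of Lemma~\ref{lem:2} with $F(z) = 1/z$ holomorphic on $\{z : \abs{z} \geq \delta\}$ and running the same Faà di Bruno estimate as in the proof of Lemma~\ref{lem:2}; but going through $\abs{x_n}^2$ keeps everything inside the already‑established real‑valued framework.
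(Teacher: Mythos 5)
Your argument is correct and is exactly the route the paper intends: the paper's entire proof is the remark that the corollary follows ``easily'' from Lemma~\ref{lem:2}, and your reduction --- writing $x_n^{-1}=\overline{x_n}/\abs{x_n}^2$, checking $(\abs{x_n}^2)\in\calD_{r,0}(K,\RR)$ via Corollary~\ref{cor:1}(ii) and linearity of $\Delta^j$, applying Lemma~\ref{lem:2} with $F(t)=1/t$ on $[\delta^2,M^2]$, and recombining with Corollary~\ref{cor:1}(ii) --- is the natural way to fill in that ``easily.'' No gaps; the bookkeeping for real/imaginary parts and sums that you flag is indeed routine.
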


The next theorem is the main result of this section. 
\begin{theorem}
	\label{thm:1}
	Fix two integers $r \geq 2$ and $s \in \{0, \ldots, r-2\}$, and a compact set $K \subset \RR$. Suppose that
	$(\lambda_n^+ : n \in \NN)$ and $(\lambda_n^- : n \in \NN)$ are two uniform Cauchy sequences from 
	$\calD_{r, 0}(K, \RR)$ so that for all $x \in K$ and $n \in \NN$,
	\begin{equation}
		\label{eq:7}
		\begin{aligned}
		\lambda_n^+(x) \lambda_n^-(x) &> 0, \\
		\abs{\lambda_n^+(x)} - \abs{\lambda_n^-(x)} &\geq \delta > 0.
		\end{aligned}
	\end{equation}
	Let $(X_n : n \in \NN) \in \calD_{r, s}\big(K, \GL(2, \RR)\big)$ be such that
	\begin{equation}
		\label{eq:8}
		\sup_{x \in K} \sup_{n \in \NN} \big(\|X_n(x)\| + \|X_n^{-1}(x)\|\big) < \infty.
	\end{equation}
	Then there are sequences $(\mu_n^+ : n \in \NN), (\mu_n^- : n \in \NN) \in \calD_{r, 0}(K, \RR)$ and 
	$(Y_n : n \in \NN) \in \calD_{r, s+1}\big(K, \GL(2, \RR)\big)$ satisfying
	\begin{equation}
		\label{eq:20}
		\begin{pmatrix}
		\lambda^+_n & 0 \\
		0 & \lambda_n^-
		\end{pmatrix}
		X_n^{-1} X_{n-1} = 
		Y_n 
		\begin{pmatrix}
		\mu_n^+ & 0 \\
		0 & \mu_n^-
		\end{pmatrix}
		Y_n^{-1},
	\end{equation}
	such that $(\mu_n^+ : n \in \NN)$ and $(\mu_n^- : n \in \NN)$ are uniform Cauchy sequences with
	\begin{align*}
		\mu_n^+(x) \mu_n^-(x) &> 0, \\
		\abs{\mu_n^+(x)} - \abs{\mu_n^-(x)} &\geq \delta' > 0,
	\end{align*}
	for all $x \in K$ and $n \in \NN$. Moreover,
	\begin{equation}
		\label{eq:10}
		\lim_{n \to \infty} \sup_{x \in K} \big\| Y_n(x) - \Id \big\| = 0.
	\end{equation}
\end{theorem}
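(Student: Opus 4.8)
\textbf{Proof proposal for Theorem~\ref{thm:1}.}
The plan is to produce the conjugating matrix $Y_n$ explicitly by diagonalizing the matrix $M_n := \diag(\lambda_n^+,\lambda_n^-) X_n^{-1} X_{n-1}$. First I would observe that $M_n$ is a small perturbation of $\diag(\lambda_n^+,\lambda_n^-)$: indeed $X_n^{-1} X_{n-1} = \Id + X_n^{-1}(X_{n-1}-X_n) = \Id - X_n^{-1}\Delta X_{n-1}$, and by \eqref{eq:8} together with $(X_n)\in\calD_{r,s}(K,\GL(2,\RR))$ (which forces $\sum_n\sup_K\|\Delta X_{n-1}\|^{r/s}<\infty$ when $s\ge 1$, and summability of $\|\Delta X_{n-1}\|^{r}$ when $s=0$, hence in all cases $\sup_K\|\Delta X_{n-1}\|\to 0$), the off-diagonal and diagonal-error entries of $M_n$ are uniformly small. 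Write $M_n = \begin{pmatrix} a_n & b_n \\ c_n & d_n\end{pmatrix}$ so that $a_n - \lambda_n^+, d_n - \lambda_n^-, b_n, c_n$ all lie in $\calD_{r,0}(K,\RR)$ (using Corollary~\ref{cor:1} and Corollary~\ref{cor:2} to control products and the inverse $X_n^{-1}$) and tend to $0$ uniformly on $K$. The separation hypothesis \eqref{eq:7} says the unperturbed eigenvalues are uniformly gapped and of the same sign, so $M_n$ has two real eigenvalues $\mu_n^\pm$ near $\lambda_n^\pm$, inheriting the same sign and a (slightly smaller) gap $\delta'>0$.

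Next I would write down the eigenvalues and eigenvectors of a general $2\times 2$ matrix via the standard formulas. Setting $\tau_n=\tr M_n=a_n+d_n$, $\rho_n=\det M_n=\lambda_n^+\lambda_n^-\det(X_n^{-1}X_{n-1})$, and $\Delta_n=\sqrt{\tau_n^2-4\rho_n}$, one gets $\mu_n^\pm=\tfrac12(\tau_n\pm\Delta_n)$. Because the discriminant stays bounded away from $0$ uniformly on $K$ (it is close to $(\lambda_n^+-\lambda_n^-)^2\ge\delta^2$), Lemma~\ref{lem:2} applied to $F(t)=\sqrt t$ shows $(\Delta_n)\in\calD_{r,0}(K,\RR)$, hence $\mu_n^\pm\in\calD_{r,0}(K,\RR)$; uniform convergence of $\lambda_n^\pm$ and of the perturbation gives that $\mu_n^\pm$ are uniform Cauchy sequences. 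For the eigenvectors I would take, say, $v_n^+=(b_n,\ \mu_n^+-a_n)^{\mathsf T}$ and $v_n^-=(\mu_n^--d_n,\ c_n)^{\mathsf T}$ — but one must be careful that these do not degenerate; the cleaner choice is to set
\[
	Y_n = \Id + \begin{pmatrix} 0 & \dfrac{b_n}{\mu_n^+-\mu_n^-} \\[1ex] \dfrac{-c_n}{\mu_n^+-\mu_n^-} & 0 \end{pmatrix},
\]
i.e. the matrix whose columns are the eigenvectors normalized so that the diagonal entries are $1$. One then checks directly that $Y_n^{-1}M_n Y_n = \diag(\mu_n^+,\mu_n^-)$: the off-diagonal entries of $M_n Y_n - Y_n\diag(\mu_n^+,\mu_n^-)$ vanish precisely by the eigenvalue equations, and the denominator $\mu_n^+-\mu_n^-$ is bounded below by $\delta'$, so $Y_n$ is well-defined, invertible for $n$ large (its off-diagonal entries $\to 0$), and $\det Y_n = 1 + \tfrac{b_nc_n}{(\mu_n^+-\mu_n^-)^2}\to 1$.

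It remains to verify the regularity $(Y_n)\in\calD_{r,s+1}(K,\GL(2,\RR))$ and \eqref{eq:10}. The limit \eqref{eq:10} is immediate since $b_n,c_n\to 0$ uniformly and $\mu_n^+-\mu_n^-$ is bounded below. For the Stolz-class membership, the key point — and the one I expect to be the main obstacle — is the gain of one index in $s$: the off-diagonal entries of $Y_n-\Id$ are, up to a factor in $\calD_{r,0}$ (namely $(\mu_n^+-\mu_n^-)^{-1}$, handled by Corollary~\ref{cor:2} and Lemma~\ref{lem:2}), equal to $b_n$ and $c_n$, and these are entries of $X_n^{-1}\Delta X_{n-1}$. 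Since $(X_n)\in\calD_{r,s}$ means $\sum\|\Delta X_{n-1}\|^{r/(s+1)}<\infty$ is \emph{not} quite what we have — rather $\sum\|\Delta^j X_n\|^{r/(j+s)}<\infty$ for $j\ge 1$ — one sees that $\Delta X_{n-1}$ already satisfies $\sum\|\Delta X_{n-1}\|^{r/(1+s)}<\infty$, which is exactly the $j=1$ summability condition needed to place $b_n,c_n$ (a product of a $\calD_{r,0}$ sequence with such a sequence, again via Corollary~\ref{cor:1}(i) in the form of $\calD^0$-ideal property) into $\calD^0_{r,s+1}$, and then $\Delta^j b_n$ for $j\ge 1$ is controlled by combining $\Delta^{j+1}X_{n-1}$ (summable to power $r/(j+1+s)$, exactly the requirement for $\calD_{r,s+1}$) with the Leibniz rule Lemma~\ref{lem:1} and Corollary~\ref{cor:1}. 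The diagonal entries of $Y_n$ are identically $1$, so they contribute nothing. Assembling these estimates — product rules, the chain rule for $\sqrt{\cdot}$ and for $t\mapsto t^{-1}$, and the index bookkeeping $r/(j+s)\rightsquigarrow r/(j+s+1)$ — yields $(Y_n)\in\calD_{r,s+1}(K,\GL(2,\RR))$ and completes the proof. The only genuinely delicate accounting is making sure that the single differencing $\Delta$ hidden inside $X_n^{-1}X_{n-1}$ is correctly ``charged'' against the index shift from $s$ to $s+1$, and that taking the square root for the eigenvalues does not spoil $\calD_{r,0}$ — both of which are guaranteed by the uniform lower bound $\delta$ on the spectral gap.
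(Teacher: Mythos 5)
Your strategy is the same as the paper's: view $W_n=\operatorname{diag}(\lambda_n^+,\lambda_n^-)X_n^{-1}X_{n-1}$ as a perturbation of the diagonal, extract $\mu_n^\pm$ from the trace and discriminant, build $Y_n$ out of eigenvectors normalized to have unit diagonal entries, and charge the single difference $\Delta X_{n-1}$ hidden inside $X_n^{-1}X_{n-1}$ against the index shift from $s$ to $s+1$. That last piece --- the observation that the off-diagonal entries of $W_n$ are entries of $X_n^{-1}\Delta X_{n-1}$ times $\calD_{r,0}$ quantities, hence lie in $\calD^0_{r,s+1}$, and that Corollary~\ref{cor:1}(i) together with Corollary~\ref{cor:2} and Lemma~\ref{lem:2} then propagates this to $Y_n-\Id$ --- is exactly the paper's argument, and your index bookkeeping ($\Delta^{j+1}X_n$ summable to the power $r/(j+1+s)$ being precisely the $j$-th condition for $\calD_{r,s+1}$) is correct.

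Two of your explicit formulas are wrong, however, and one of them breaks the central identity \eqref{eq:20} as written. The matrix $Y_n=\Id+\bigl(\begin{smallmatrix}0 & b_n/(\mu_n^+-\mu_n^-)\\ -c_n/(\mu_n^+-\mu_n^-) & 0\end{smallmatrix}\bigr)$ does \emph{not} satisfy $Y_n^{-1}M_nY_n=\operatorname{diag}(\mu_n^+,\mu_n^-)$. The eigenvector for $\mu_n^+$ with first entry $1$ is $(1,\,c_n/(\mu_n^+-d_n))^{\mathsf T}$ and the one for $\mu_n^-$ with second entry $1$ is $(b_n/(\mu_n^--a_n),\,1)^{\mathsf T}$; since $\mu_n^++\mu_n^-=a_n+d_n$, the denominators are $\mu_n^+-d_n=a_n-\mu_n^-$ and $\mu_n^--a_n=d_n-\mu_n^+$, which coincide with $\pm(\mu_n^+-\mu_n^-)$ only asymptotically as $b_nc_n\to0$, and your signs are reversed relative to even that approximation --- so the off-diagonal entries of $M_nY_n-Y_n\operatorname{diag}(\mu_n^+,\mu_n^-)$ do not vanish. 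The repaired denominators are still uniformly bounded below (they tend to $\pm(\lambda^+-\lambda^-)$), so the corrected $Y_n$ --- which is what the paper uses, written via $w_{22}^{(n)}-w_{11}^{(n)}-\sigma_n\sqrt{\discr W_n}$ --- has exactly the regularity and smallness you claim, and the rest of your argument goes through. Separately, $\mu_n^\pm=\tfrac12(\tau_n\pm\Delta_n)$ mislabels the eigenvalues when $\lambda_n^\pm<0$: there $\tfrac12(\tau_n+\Delta_n)$ has the \emph{smaller} modulus, so $|\mu_n^+|-|\mu_n^-|\ge\delta'$ fails; you need $\mu_n^\pm=\tfrac12\bigl(\tau_n\pm\sign{\tau_n}\Delta_n\bigr)$, as in the paper. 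Both are local, reparable slips rather than gaps in the method, but as stated your $Y_n$ does not diagonalize $W_n$.
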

\begin{proof}
	Let
	\[
		D_n = 
		\begin{pmatrix}
		\lambda_n^+ & 0 \\
		0 & \lambda_n^-
		\end{pmatrix}.
	\]
	We set
	\[
		W_n = D_n X_n^{-1} X_{n-1} = D_n \big( \Id - X_n^{-1} \Delta X_{n-1}\big).
	\]
	By \eqref{eq:8}, we have
	\[
		\sup_{K} \big\|W_n - D_n \big\| = \sup_{K} \big\|D_n X_n^{-1} \Delta X_{n-1} \big\| 
		\leq 
		c
		\sup_K \big\| \Delta X_{n-1} \big\|.
	\]
	Since $(X_n) \in \calD_{r, s}\big(K, \GL(2, \RR)\big)$,
	\[
		\lim_{n \to \infty} \sup_K \| \Delta X_n \|= 0,
	\]
	thus
	\[
		\lim_{n \to \infty} \sup_K \big\| W_n - D_n \big\|= 0.
	\]
	In particular, $W_n$ has positive discriminant. Let $\mu^+_n$ and $\mu_n^-$ be its eigenvalues with 
	$\abs{\mu^+_n} > \abs{\mu^-_n}$. Then
	\[
		\lim_{n \to \infty} \sup_K \big|\mu_n^+ - \lambda_n^+ \big| = 0,
		\qquad\text{and}\qquad
		\lim_{n \to \infty} \sup_K \big|\mu_n^- - \lambda_n^- \big| = 0,
	\]
	and hence $(\mu^+_n : n \in \NN)$ and $(\mu_n^- : n \in \NN)$ are a uniform Cauchy sequence satisfying \eqref{eq:7}. 
	Setting
	\[
		X_n = \begin{pmatrix}
		x_{11}^{(n)} & x_{12}^{(n)} \\
		x_{21}^{(n)} & x_{22}^{(n)}
		\end{pmatrix},
		\qquad\text{and}\qquad
		W_n =
		\begin{pmatrix}
		w_{11}^{(n)} & w_{12}^{(n)} \\
		w_{21}^{(n)} & w_{22}^{(n)}
		\end{pmatrix}, 
	\]
	we obtain
	\[
		W_n
		=
		\frac{1}{\det X_n}
		\begin{pmatrix}
		\lambda_n^+ \big(x_{11}^{(n-1)} x_{22}^{(n)} - x_{21}^{(n-1)} x_{12}^{(n)}\big) &
		\lambda_n^+ \big(x_{12}^{(n-1)} x_{22}^{(n)} - x_{22}^{(n-1)} x_{12}^{(n)}\big) \\
		\lambda_n^- \big( x_{21}^{(n-1)} x_{11}^{(n)} - x_{11}^{(n-1)} x_{21}^{(n)}\big) & 
		\lambda_n^- \big( x_{22}^{(n-1)} x_{11}^{(n)} - x_{12}^{(n-1)} x_{21}^{(n)}\big)
		\end{pmatrix}.
	\]
	By \eqref{eq:8} and Corollary \ref{cor:2}, we have
	\[
		\bigg(\frac{1}{\det X_n} \bigg) \in \calD_{r, 0},
	\]
	hence by Corollary \ref{cor:1}(ii), we get that
	\[
		\big(w_{11}^{(n)} : n \in \NN \big), \big(w_{22}^{(n)} : n \in \NN \big)\in \calD_{r, 0}.
	\]
	Moreover,
	\begin{align*}
		w_{12}^{(n)} 
		&= 
		\frac{\lambda_n^+}{\det X_n} \big(x_{12}^{(n-1)} x_{22}^{(n)} - x_{22}^{(n-1)} x_{12}^{(n)}\big) \\
		&=
		\frac{\lambda_n^+}{\det X_n} \Big(\big(x_{22}^{(n)} - x_{22}^{(n-1)}\big) x_{12}^{(n)} 
		- \big(x_{12}^{(n)}-x_{12}^{(n-1)}\big) x_{22}^{(n)}\Big),
	\end{align*}
	and
	\begin{align*}
		w_{21}^{(n)} 
		&=
		\frac{\lambda_n^-}{\det X_n} \Big(\big(x_{22}^{(n)} - x_{22}^{(n-1)}\big) x_{21}^{(n)} 
		- \big(x_{21}^{(n)}-x_{21}^{(n-1)}\big) x_{22}^{(n)}\Big),
	\end{align*}
	thus, by Corollary \ref{cor:1}(i),
	\[
		\big(w_{12}^{(n)} : n \in \NN \big), \big(w_{21}^{(n)} : n \in \NN \big)\in \calD_{r, s+1}^0.
	\]
	Next, we compute the eigenvalues. We obtain
	\[
		\mu_n^+ = \frac{w_{11}^{(n)}+w_{22}^{(n)}}{2} 
		+ \frac{\sigma_n}{2} \sqrt{\discr{W_n}},\qquad\text{and}\qquad
		\mu_n^- = \frac{w_{11}^{(n)}+w_{22}^{(n)}}{2}
        - \frac{\sigma_n}{2} \sqrt{\discr{W_n}}
	\]
	where $\sigma_n = \sign{w_{11}^{(n)}}$, and
	\[
		\discr{W_n} = \big(w_{22}^{(n)} - w_{11}^{(n)}\big)^2 + 4 w_{12}^{(n)} w_{21}^{(n)}.
	\]
	Since for all $n$ sufficiently large
	\begin{equation}
		\label{eq:11}
		\big| w_{11}^{(n)}-w_{22}^{(n)} \big| \geq 
		\big|\lambda_n^+ - \lambda_n^-\big|
		- \big|w_{11}^{(n)}-\lambda_n^+ \big|
        - \big|w_{22}^{(n)}-\lambda_n^- \big| \geq \frac{\delta}{2},
	\end{equation}
	by Lemma \ref{lem:2}, we have $(\mu_n^+),(\mu_n^-) \in \calD_{r, 0}(K, \RR)$. It remains to compute the matrix 
	$Y_n$. Suppose that the equations 
	\begin{equation}
		\label{eq:30}
		W_n 
		\begin{pmatrix}
			1 \\
			v_n^+
		\end{pmatrix}
		=
		\mu_n^+ 
		\begin{pmatrix}
			1 \\
			v_n^+
		\end{pmatrix}
		\qquad\text{and}\qquad
		W_n
		\begin{pmatrix}
			v_n^- \\
			1
		\end{pmatrix}
		=
		\mu_n^-
		\begin{pmatrix}
			v_n^- \\
			1
		\end{pmatrix}
	\end{equation}
	both have solutions, then the matrix
	\[
		Y_n = 
		\begin{pmatrix}
		1 & v_n^- \\
		v_n^+ & 1
		\end{pmatrix}
	\]
	satisfies \eqref{eq:20}. Observe that equations \eqref{eq:30} are equivalent to
	\begin{align}
		\label{eq:34}
		\left\{
		\begin{aligned}
		w_{11}^{(n)} + v_n^+ w_{12}^{(n)} &= \mu_n^+, \\
		w_{21}^{(n)} + v_n^+ w_{22}^{(n)} &= \mu_n^+ v_n^+,
		\end{aligned}
		\right.
		\qquad\text{and}\qquad
		\left\{
		\begin{aligned}
		w_{11}^{(n)} v_n^- + w_{12}^{(n)} &= \mu_n^- v_n^-, \\
		w_{21}^{(n)} v_n^- + w_{22}^{(n)} &= \mu_n^-.
		\end{aligned}
		\right.
	\end{align}
	If $\sigma_n = 1$ then by \eqref{eq:11},
	\[
		w_{22}^{(n)} - w_{11}^{(n)} - \sqrt{\discr{W_n}} \leq -\frac{\delta}{2},
	\]
	otherwise
	\[
		w_{22}^{(n)} - w_{11}^{(n)} + \sqrt{\discr{W_n}} \geq \frac{\delta}{2}.
	\]
	Thus
	\[
		\big| w_{22}^{(n)} - w_{11}^{(n)} - \sigma_n \sqrt{\discr{W_n}} \big| \geq \frac{\delta}{2},
	\]
	and
	\begin{align*}
		v_n^+ 
		=
		\frac{-2 w_{21}^{(n)}}{w_{22}^{(n)} - w_{11}^{(n)}
        - \sigma_n \sqrt{\discr W_n}},
		\quad\text{and}\quad
		v_n^- 
        =
		\frac{2 w_{12}^{(n)}}{w_{22}^{(n)} - w_{11}^{(n)} 
		- \sigma_n \sqrt{\discr{W_n}}},
	\end{align*}
	satisfy the systems \eqref{eq:34}. In view of \eqref{eq:11}, Corollary \ref{cor:2} and Corollary \ref{cor:1}(i),
	we conclude that $(v_n^+), (v_n^-) \in \calD_{r, s+1}^0(K, \RR)$. Finally, Lemma \ref{lem:2} implies that $(Y_n)$
	belongs to $\calD_{r, s+1}\big(K, \GL(2, \RR)\big)$. Because
	\[
		\lim_{n \to \infty} \sup_K{\abs{v_n^+}} = \lim_{n \to \infty} \sup_K{\abs{v_n^-}} = 0,
	\]
	we easily obtain \eqref{eq:10}. 
\end{proof}

\begin{corollary}
	The sequences $(\mu^-_n)$ and $(\mu^+_n)$ converge to the same limit as $(\lambda^-_n)$ and $(\lambda^+_n)$, 
	respectively.
\end{corollary}

\section{Levinson's type theorems} \label{sec:levinson}
In this section we develop discrete variants of the Levinson's theorem. There are two cases we need to distinguish
according to whether the limiting matrix has two different eigenvalues or not.

\subsection{Different eigenvalues}
\begin{theorem}
	\label{thm:2}
	Let $(X_n : n \in \NN)$ be a sequence of continuous mappings defined on $\RR$ with values in $\GL(2, \RR)$ 
	that converges uniformly on a compact set $K$ to the mapping $\calX$ with $\discr \calX(x) \neq 0$ and
	$\det \calX(x) > 0$ for each $x \in K$. If $\discr \calX > 0$, we additionally assume that for all $x \in K$,
	\begin{equation}
		\label{eq:35}
		\big|[\calX(x)]_{1, 1} - \lambda_1(x)\big| > 0
		\quad\text{and}\quad
		\big|[\calX(x)]_{2, 2} - \lambda_2(x)\big| > 0
	\end{equation}
	where $\lambda_1$ and $\lambda_2$ are continuous functions  on $K$ so that $\lambda_1(x)$ and $\lambda_2(x)$ are
	eigenvalues of $\calX(x)$. Let $(E_n : n \in \NN)$ be a sequence of continuous mappings defined on $\RR$ with values
	in $\Mat(2, \CC)$ such that
	\begin{equation}
		\label{eq:40}
		\sum_{n = 1}^\infty \sup_K{\| E_n\|} < \infty.
	\end{equation}
	If $(X_n : n \in \NN)$ belongs to $\calD_{r, 0}\big(K, \GL(2, \RR) \big)$ for a certain $r \geq 1$ and
	$\eta$ is a continuous eigenvalue of $\calX$, then there are continuous mappings
	$\Phi_n: K \rightarrow \CC^2$, $\mu_n: K \rightarrow \CC$, and $v : K \rightarrow \CC^2$, satisfying
	\[
		\Phi_{n+1} = (X_n + E_n)\Phi_n
	\]
	and
	\[
		\lim_{n \to \infty} \sup_{x \in K}{|\mu_n(x) - \eta(x)|} = 0,
	\]
	such that
	\begin{equation}
		\label{eq:14}
		\lim_{n \to \infty}
		\sup_{x \in K}{
		\bigg\|
		\frac{\Phi_n(x)}{\prod_{j=1}^{n-1} \mu_j(x)} - v(x)
		\bigg\|}
		=
		0
	\end{equation}
	whereas $v(x)$ is an eigenvector of $\calX(x)$ corresponding to $\eta(x)$ for each $x \in K$.
\end{theorem}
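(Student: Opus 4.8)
The plan is to reduce the problem to a telescoping/perturbation argument after an appropriate change of variables. First I would diagonalize the limiting matrix $\calX$: since $\discr\calX(x)\neq 0$ and $\det\calX(x)>0$ on $K$, the two eigenvalues $\lambda_1,\lambda_2$ are distinct, continuous, and nonzero, and (in the case $\discr\calX>0$, using the nondegeneracy hypothesis \eqref{eq:35}) one can choose continuous eigenvectors and hence a continuous, uniformly invertible matrix $C$ on $K$ with $C^{-1}\calX C=\diag(\lambda_1,\lambda_2)$. The novelty is that I do \emph{not} want to conjugate by the $n$-dependent diagonalizing matrices $C_n$ of $X_n$ (which would only lie in $\calD_{r,0}$ and destroy the summability needed for Levinson), but rather to work with the \emph{fixed} matrix $C$ and absorb the difference into the error. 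Writing $\Psi_n = C^{-1}\Phi_n$, the recursion becomes $\Psi_{n+1} = (C^{-1}X_n C + C^{-1}E_n C)\Psi_n$; since $X_n\to\calX$ uniformly on $K$, the matrix $C^{-1}X_n C$ tends uniformly to $\diag(\lambda_1,\lambda_2)$, but the off-diagonal entries need not be summable, only small. This is where I invoke Theorem~\ref{thm:1}.

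The key step is an iterative diagonalization. Apply Theorem~\ref{thm:1} (with $s=0$) to the sequence $\bigl(D_n X_n^{-1}X_{n-1}\bigr)$, where $D_n=\diag(\lambda_n^+,\lambda_n^-)$ are the eigenvalues of $X_n$ ordered by modulus (these lie in $\calD_{r,0}(K,\RR)$ by Lemma~\ref{lem:2} applied to the square-root formula, and form uniform Cauchy sequences since $X_n$ converges uniformly; the gap condition \eqref{eq:7} follows from $\discr\calX>0$ and $\det\calX>0$, possibly after discarding finitely many $n$). The theorem produces $(Y_n)\in\calD_{r,1}$ with $Y_n\to\Id$ uniformly and new eigenvalues $\mu_n^\pm\in\calD_{r,0}$ still with a uniform spectral gap. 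Iterating this $r$ times, after $r$ steps the transformed transfer matrices differ from a diagonal matrix by a sequence in $\calD_{r,r}^{0}$-type summability, i.e.\ the off-diagonal part is summable over $K$. At that stage I would set $\Phi_n = T_n \Psi_n$ where $T_n$ is the product of all the accumulated conjugating matrices (each converging to $\Id$), so that $T_n\to\Id$ uniformly and $T_n$ is uniformly invertible for large $n$; the recursion for $\Psi_n$ then reads $\Psi_{n+1}=(\Lambda_n + \tilde E_n)\Psi_n$ with $\Lambda_n$ diagonal, $\Lambda_n\to\diag(\lambda_1,\lambda_2)$ uniformly, and $\sum_n\sup_K\|\tilde E_n\|<\infty$ (the original $E_n$ contributes here too, since conjugation by uniformly bounded $T_n^{-1}$, $T_n$ preserves \eqref{eq:40}).

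With the system in the form $\Psi_{n+1}=(\Lambda_n+\tilde E_n)\Psi_n$, $\Lambda_n=\diag(\mu_n^{(r)},\ast)$, I would run the classical discrete Levinson argument uniformly in $x\in K$: dividing the first coordinate by $\prod_{j<n}\mu_j^{(r)}(x)$ and using the uniform domination $|\mu_n^{(r)}|\ge(1+\delta')|\ast|$ to control the ratios of the products, one obtains a contraction-type fixed-point equation for the rescaled vector whose inhomogeneous term is summable by \eqref{eq:40}; the uniform gap makes the geometric series converge uniformly on $K$, yielding a continuous limit $v_0(x)=(1,0)^{\mathrm T}$-direction in the $\Psi$-coordinates. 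Undoing the transformations, $\Phi_n/\prod_{j<n}\mu_j \to C Y_\infty \cdots v_0 = v$, a continuous eigenvector of $\calX(x)$ for the chosen eigenvalue $\eta$ (the case $\eta=\lambda_2$ being symmetric). I expect the main obstacle to be bookkeeping the case $\discr\calX>0$ versus the irrelevance of \eqref{eq:35} when $\discr\calX<0$ (complex conjugate eigenvalues), and more substantively, verifying that after the $r$-fold application of Theorem~\ref{thm:1} the remaining off-diagonal terms are genuinely \emph{absolutely summable} on $K$ rather than merely $\calD_{r,s}$ for some $s<r$ — this is exactly what the index shift $s\mapsto s+1$ in Theorem~\ref{thm:1} buys, and one must check that $r$ iterations starting from $s=0$ reach $s=r-1=\calD_{1,0}$, i.e.\ plain summability; the convergence $\mu_n^\pm\to\lambda^\pm$ at each stage guarantees the final $\mu_n\to\eta$.
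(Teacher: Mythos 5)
Your proposal is correct and follows essentially the same route as the paper: diagonalize each $X_n$ as $C_{n,0}D_{n,0}C_{n,0}^{-1}$, apply Theorem~\ref{thm:1} iteratively ($r-1$ times, not $r$ — the index runs $s=0,1,\dots,r-1$ and $\calD_{r,r-1}=\calD_{1,0}$ gives the needed summability of $\Delta C_{n,r-1}$) to push the non-summable part into the conjugating matrices, and then invoke a uniform discrete Levinson theorem (the paper cites \cite[Theorem 4.1]{Silva2004} rather than rerunning the fixed-point argument, and groups the transfer matrices in pairs $Q_{2k+1}^{-1}(X_{2k+1}+E_{2k+1})(X_{2k}+E_{2k})Q_{2k-1}$ to make the telescoping clean). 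The only substantive point your sketch glosses over is that for $\discr\calX<0$ the eigenvalues are complex, so Theorem~\ref{thm:1} as stated (real eigenvalues with a modulus gap) does not apply and one must use its complex analogue from \cite{SwiderskiTrojan2019}, exactly as the paper does in its second half.
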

\begin{proof}
	Suppose that $\discr \calX(x) > 0$ and $\det \calX(x) > 0$ for all $x \in K$. In particular, $\tr \calX(x) \neq 0$
	for all $x \in K$. Let $\lambda^+$ and $\lambda^-$ denote the eigenvalues of $\calX$ such that 
	$|\lambda^+| > |\lambda^-|$, namely we set
	\[
		\lambda^+(x) = \frac{\tr \calX(x) + \sigma \sqrt{\discr \calX(x)}}{2},
		\qquad\text{and}\qquad
		\lambda^-(x) = \frac{\tr \calX(x) - \sigma \sqrt{\discr \calX(x)}}{2}
	\]
	where $\sigma = \sign{\tr \calX}$. Without loss of generality we can assume that \eqref{eq:35} is satisfied with
	$\lambda_1 = \lambda^+$ and $\lambda_2 = \lambda^-$, since otherwise we consider mappings conjugated by
	\[
		J = 
		\begin{pmatrix}
			0 & 1\\
			1 & 0
		\end{pmatrix}.
	\]
	Select $\delta > 0$ such that for all $x \in K$,
	\[
		\big|[\calX(x)]_{1, 1} - \lambda^+(x)\big| \geq 2 \delta
		\quad\text{and}\quad
		\big|[\calX(x)]_{2, 2} - \lambda^-(x)\big| \geq 2 \delta,
	\]
	and
	\[
		\discr \calX(x) \geq 2 \delta^2, \qquad \det \calX(x) \geq 2 \delta^2.
	\]
	Since $(\discr X_n : n \in \NN)$ converges uniformly on $K$, there is $M \geq 1$ such that for all $n \geq M$ and 
	$x \in K$,
	\begin{equation}
		\label{eq:43}
		\discr X_n(x) \geq \delta^2 \qquad
		\det X_n(x) \geq \delta^2.
	\end{equation}
	Hence, the matrix $X_n(x)$ has two eigenvalues
	\[
		\lambda_n^+(x) = \frac{\tr X_n(x) + \sigma \sqrt{\discr X_n(x)}}{2},
		\qquad\text{and}\qquad
		\lambda_n^-(x) = \frac{\tr X_n(x) - \sigma \sqrt{\discr X_n(x)}}{2},
	\]
	By increasing $M$, we can also assume that for all $n \geq M$ and $x \in K$,
	\begin{equation}
		\label{eq:41}
		\big|[X_n(x)]_{1, 1} - \lambda^+_n(x)\big| \geq \delta
		\quad\text{and}\quad
		\big|[X_n(x)]_{2, 2} - \lambda^-_n(x)\big| \geq \delta.
	\end{equation}
	Then setting
	\[
		C_{n, 0} =
		\begin{pmatrix}
			\frac{[X_n]_{1,2}}{\lambda^+_n-[X_n]_{1,1}} & 1 \\
			1 & \frac{[X_n]_{2,1}}{\lambda^-_n - [X_n]_{2,2}}
		\end{pmatrix}
		\qquad\text{and}\qquad
		D_{n, 0} =
		\begin{pmatrix}
			\lambda_n^+ & 0 \\
			0 & \lambda_n^-
		\end{pmatrix},
	\]
	we obtain
	\[
		X_n = C_{n, 0} D_{n, 0} C_{n, 0}^{-1}.
	\]
	In view of \eqref{eq:43} and \eqref{eq:41}, by Corollaries \ref{cor:2} and \ref{cor:1}, the sequences
	$(C_{n, 0} : n \geq M)$ and $(D_{n, 0} : n \geq M)$ belong to $\calD_{r, 0}\big(K, \GL(2, \RR)\big)$.
	If $r \geq 2$, in view of \eqref{eq:43} we can apply Theorem \ref{thm:1} to get two sequences of mappings
	\[
		(C_{n,1} : n \geq M) \in \calD_{r, 1}\big(K, \GL(2, \RR) \big),
		\qquad\text{and}\qquad
		(D_{n, 1} : n \geq M) \in \calD_{r, 0}\big(K, \GL(2, \RR) \big),
	\]
	such that
	\[
		D_{n, 0} C_{n, 0}^{-1} C_{n-1, 0} = C_{n, 1} D_{n, 1} C_{n, 1}^{-1},
	\]
	and
	\[
		D_{n, 1} = 
		\begin{pmatrix}
			\gamma_{n, 1}^+ & 0 \\
			0 & \gamma_{n, 1}^-
		\end{pmatrix}.
	\]
	Then for $n \geq M+1$,
	\begin{align*}
		X_{n+1} X_n
		&=
		(C_{n+1,0} D_{n+1, 0} C_{n+1, 0}^{-1}) (C_{n, 0} D_{n, 0} C_{n, 0}^{-1}) \\
		&=
		C_{n+1, 0} (D_{n+1, 0} C_{n+1, 0}^{-1} C_{n, 0}) (D_{n, 0} C_{n, 0}^{-1} C_{n-1, 0}) C_{n-1, 0}^{-1} \\
		&=
		C_{n+1, 0} (C_{n+1, 1} D_{n+1, 1} C_{n+1,1}^{-1}) (C_{n, 1} D_{n, 1} C_{n, 1}^{-1}) C_{n-1, 0}^{-1} \\
		&=
		C_{n+1, 0} C_{n+1, 1} (D_{n+1, 1} C_{n+1, 1}^{-1} C_{n, 1}) (D_{n, 1} C_{n, 1}^{-1} C_{n-1, 1}) 
		(C_{n-1, 0} C_{n-1, 1})^{-1}.
	\end{align*}
	By repeated application of Theorem \ref{thm:1} for $k \in \{2, 3, \ldots, r-1\}$, we can find sequences
	\begin{equation}
		\label{eq:6}
		(C_{j, k} : j \geq M+k) \in \calD_{r, k}\big(K, \GL(2, \RR)\big),
		\quad\text{and}\quad
		(D_{j, k} : j \geq M+k) \in \calD_{r, 0}\big(K, \GL(2, \RR)\big),
	\end{equation}
	such that
	\[
		D_{j, k-1} C_{j, k-1}^{-1} C_{j-1, k-1} = C_{j, k} D_{j, k} C_{j, k}^{-1}.
	\]
	Hence,
	\[
		X_{n+1} X_n = 
		Q_{n+1} \big(D_{n+1, r-1} C_{n+1, r-1}^{-1} C_{n, r-1}\big) \big(D_{n, r-1} C_{n, r-1}^{-1} C_{n-1, r-1} \big)
		Q_{n-1}^{-1}
	\]
	where
	\[
		Q_m = C_{m, 0} C_{m, 1} \cdots C_{m, r-1}.
	\]
	Notice that 
	\[
		\lim_{m \to \infty} Q_m = 
		\begin{pmatrix}
			\frac{[\calX]_{1,2}}{\lambda^+-[\calX]_{1,1}} & 1 \\
			1 & \frac{[\calX]_{2,1}}{\lambda^--[\calX]_{2,2}}
		\end{pmatrix}
	\]
	uniformly on $K$.

	Let us now consider the recurrence equation
	\begin{align*}
		\Psi_{k+1} &= Q_{2k+1}^{-1} (X_{2k+1} + E_{2k+1})(X_{2k} + E_{2k}) Q_{2k-1} \Psi_k \\
		&=\big(Y_k + R_k + F_k\big) \Psi_k, \qquad k \geq M
	\end{align*}
	where
	\[
		Y_k = D_{2k+1, r-1} D_{2k, r-1}
		=
		\begin{pmatrix}
			\gamma_{2k+1, r-1}^+ \gamma_{2k, r-1}^+ & 0 \\
			0 & \gamma_{2k+1, r-1}^- \gamma_{2k, r-1}^-
		\end{pmatrix},
	\]
	\[
		R_k = (D_{2k+1, r-1} C_{2k+1, r-1}^{-1} C_{2k, r-1})
		(D_{2k,r-1} C_{2k, r-1}^{-1} C_{2k-1, r-1}) - D_{2k+1, r-1} D_{2k, r-1},
	\]
	and
	\[
		F_k = Q_{2k+1}^{-1} X_{2k+1} E_{2k} Q_{2k-1} + Q_{2k+1}^{-1} E_{2k+1} X_{2k} Q_{2k-1} 
		+ Q_{2k+1}^{-1} E_{2k+1} E_{2k} Q_{2k-1}.
	\]
	Since
	\begin{align*}
		R_k &= -D_{2k+1, r-1} C_{2k+1, r-1}^{-1} \big( \Delta C_{2k, r-1}\big) D_{2k, r-1} \\
		&\phantom{=}- D_{2k+1, r-1} C_{2k+1, r-1}^{-1} C_{2k, r-1} D_{2k, r-1} C_{2k, r-1}^{-1}
		\big( \Delta C_{2k-1, r-1} \big),
	\end{align*}
	we easily see that
	\[
		\|R_k\| \leq
		c
		\big(\big\|\Delta C_{2k, r-1}\big\| + \big\|\Delta C_{2k-1, r-1} \big\| \big),
	\]
	which together with \eqref{eq:6} implies that
	\[
		\sum_{k = M}^\infty \sup_K \|R_k\| < \infty.
	\]
	In view of \eqref{eq:40} we also get
	\[
		\sum_{k = M}^{\infty} \sup_K \|F_k\| < \infty.
	\]
	Let us consider the case $\eta = \lambda^-$. The sequence $(\gamma_{n, r-1}^- : n \geq M)$ converges to $\lambda^-$,
	thus there are $n_0 \geq M$ and $\delta' > 0$, so that for all $n \geq n_0$,
	\[
		\bigg|\frac{\gamma^+_{n, r-1}}{\gamma^-_{n, r-1}}\bigg| \geq 1 + \frac{\delta}{\abs{\gamma^-_{n, r-1}}}
		\geq 1 + \delta',
	\]
	thus for all $k_1 \geq k_0 \geq n_0$,
	\[
		\prod_{j = k_0}^{k_1} 
		\bigg|\frac{\gamma^+_{2j+1, r-1}}{\gamma^-_{2j+1, r-1}}\bigg| \cdot 
		\bigg|\frac{\gamma^+_{2j, r-1}}{\gamma^-_{2j, r-1}} \bigg|
		\geq (1+ \delta')^{2(k_1-k_0)}.
	\]
	In particular, $(Y_k : k \geq n_0)$ satisfies the uniform Levinson's condition, see
	\cite[Definition 2.1]{Silva2004}. Therefore, in view of \cite[Theorem 4.1]{Silva2004}, there is a sequence
	$(\Psi_k : k \geq n_0)$ such that
	\[
		\lim_{n \to \infty}
		\sup_{x \in K}{
		\bigg\|
		\frac{\Psi_k(x)}{\prod_{j = n_0}^{k-1} \gamma^-_{2j+1, r-1}(x) \gamma^-_{2j, r-1}(x)}
		-
		e_2
		\bigg\|} = 0
	\]
	where
	\[
		e_2 = \begin{pmatrix}
		0 \\
		1
		\end{pmatrix}.
	\]
	In fact, in the proof of \cite[Theorem 4.1]{Silva2004} the author used supremum norm with respect to the
	parameter, thus when all the mappings in \cite[Theorem 4.1]{Silva2004} are continuous (or holomorphic) with respect
	to this parameter, the functions $\Phi_k$ are continuous (or holomorphic, respectively).

	We are now in the position to define $(\Phi_n : n \geq 2 n_0)$. Namely, for $x \in K$ and $n \geq 2n_0$, we set
	\[
		\Phi_{n}(x) = 
		\begin{cases}
			Q_{2k-1}(x) \Psi_k(x) & \text{if } n = 2k, \\
			(X_{2k}(x) + E_{2k}(x)) Q_{2k-1}(x) \Psi_k(x) & \text{if } n = 2k+1.
		\end{cases}
	\]
	As it is easy to check, $(\Phi_n : n \geq n_0)$ satisfies
	\[
		\Phi_{n+1} = (X_n + E_n) \Phi_n.
	\]
	Observe that for
	\[
		v^- = 
		\begin{pmatrix}
			1 \\
			\frac{[\calX]_{2,1}}{\lambda^--[\calX]_{2,2}}
		\end{pmatrix}
	\]
	we obtain
	\[
		\lim_{k \to \infty} \sup_{x \in K}{ \big\|Q_{2k-1}(x) e_2 - v^-(x)\big\|} = 0,
	\]
	and
	\[
		\lim_{k \to \infty} \sup_{x \in K}{
		\bigg\| \frac{X_{2k}(x) + E_{2k}(x)}{\gamma^-_{2k, r-1}(x)} Q_{2k-1}(x) e_2 - v^-(x) \bigg\|} = 0.
	\]
	Therefore, \eqref{eq:14} is satisfied for $(\mu_n : n \in \NN)$ defined on $K$ by the formula
	\[
		\mu_n = 
		\begin{cases}
			1 & \text{for } n < n_0, \\
			\gamma^-_{n, r-1} &\text{for } n \geq n_0.
		\end{cases}
	\]
	This completes the proof. The reasoning when $\eta = \lambda^+$ is analogous.

	If $\discr \calX(x) < 0$ for $x \in K$, the argument is simpler. First, let us observe that the matrix $\calX(x)$ has
	real entries, thus $|[\calX(x)]_{1, 2}| > 0$ for all $x \in K$. Since $(X_n : n \in \NN)$ converges uniformly on $K$,
	there are $\delta > 0$ and $M \geq 1$, such that for all $n \geq M$ and $x \in K$,
	\[
		\discr X_n(x) \leq -\delta,\qquad\text{and}\qquad |[X_n(x)]_{1, 2}| > \delta.
	\]
	Therefore, for each $x \in K$, the matrix $X_n(x)$ has two eigenvalues $\lambda_n$ and $\overline{\lambda_n}$ where
	\[
		\lambda_n(x) = \frac{\tr X_n(X) + i\sqrt{\abs{\discr X_n(x)}}}{2}.
	\]
	Hence, setting
	\[
		C_{n, 0} =
		\begin{pmatrix}
			1 & 1 \\
			\frac{\lambda_n - [X_n]_{1,1}}{[X_n]_{1,2}} & \frac{\overline{\lambda_n} - [X_n]_{1,1}}{[X_n]_{1,2}}
		\end{pmatrix},
		\qquad\text{and}\qquad
		D_{n, 0} =
		\begin{pmatrix}
			\lambda_n & 0 \\
			0 & \overline{\lambda_n}
		\end{pmatrix},
	\]
	we obtain
	\[
		X_n = C_{n, 0} D_{n, 0} C_{n, 0}^{-1}.
	\]
	Moreover, $(C_{n, 0} : n \geq M)$ and $(D_{n, 0} : n \geq M)$ belong to $\calD_{r, 0}\big(K, \GL(2, \CC) \big)$.
	If $r \geq 2$, then by \cite[Theorem 1]{SwiderskiTrojan2019}, there are two sequences of matrices
	\[
		(C_{n, 1} : n \geq M) \in \calD_{r, 1}\big(K, \GL(2, \CC) \big), \qquad\text{and}\qquad
		(C_{n, 1} : n \geq M) \in \calD_{r, 0}\big(K, \GL(2, \CC) \big),
	\]
	such that
	\[
		D_{n, 0} C_{n, 0}^{-1} C_{n, 0} = C_{n, 1} D_{n, 1} C_{n, 1}^{-1},
	\]
	and
	\[
		D_{n, 1} = 
		\begin{pmatrix}
			\gamma_{n, 1} & 0 \\
			0 & \overline{\gamma_{n, 1}}
		\end{pmatrix}.
	\]
	By repeated application of \cite[Theorem 1]{SwiderskiTrojan2019}, for $k \in \{2, 3, \ldots, r-1\}$, we can
	find sequences
	\[
		(C_{j, k} : j \geq M+k) \in \calD_{r, k}\big(K, \GL(2, \CC)\big),
		\qquad\text{and}\qquad
		(D_{j, k} : j \geq M+k) \in \calD_{r, 0}\big(K, \GL(2, \CC)\big),
	\]
	such that
	\[
		D_{j, k-1} C_{j, k-1}^{-1} C_{j-1, k-1} = C_{j, k} D_{j, k} C_{j, k}^{-1}.
	\]
	Hence,
	\[
		X_{n+1}X_n = Q_{n+1} \big(D_{n+1, r-1} C_{n+1, r-1}^{-1} C_{n, r-1} \big) 
		\big(D_{n, r-1} C_{n, r-1}^{-1} C_{n-1, r-1}\big) Q_{n-1}^{-1},
	\]
	where
	\[
		Q_m = C_{m, 0} C_{m, 1} \cdots C_{m, r-1}.
	\]
	Notice that 
	\[
		\lim_{m \to \infty} Q_m = 
		\begin{pmatrix}
			1 & 1 \\
		    \frac{\lambda - [\calX]_{1,1}}{[\calX]_{1,2}} & \frac{\overline{\lambda} - [\calX]_{1,1}}{[\calX]_{1,2}}
		\end{pmatrix}
	\]
	uniformly on $K$. 

	We next consider the recurrence equation
	\begin{align*}
		\Psi_{k+1}
		&= Q_{2k+1}^{-1} (X_{2k+1} + E_{2k+1})(X_{2k} + E_{2k})Q_{2k-1} \Psi_k \\
		&= (Y_k + R_k + F_k) \Psi_k, \qquad k \geq M
	\end{align*}
	where 
	\[
		Y_{k} = D_{2k+1, r-1} D_{2k, r-1} =
		\begin{pmatrix}
			\gamma_{2k+1, r} \gamma_{2k, r} & 0 \\
			0 & \overline{\gamma_{2k+1, r}} \overline{\gamma_{2k, r}}
		\end{pmatrix},
	\]
	\[
		R_k = (D_{2k+1, r-1} C_{2k+1, r-1}^{-1} C_{2k, r-1})
		(D_{2k,r-1} C_{2k, r-1}^{-1} C_{2k-1, r-1})-D_{2k+1, r-1} D_{2k, r-1},
	\]
	and
	\[
		F_k = Q_{2k+1}^{-1} X_{2k+1} E_{2k} Q_{2k-1} + Q_{2k+1}^{-1} E_{2k+1} X_{2k} Q_{2k-1}
		+Q_{2k+1}^{-1} E_{2k+1} E_{2k} Q_{2k-1}.
	\]
	Suppose that $\eta = \overline{\lambda}$. Since for all $n \geq M$,
	\[
		\bigg|\frac{\gamma_{n, r-1}}{\overline{\gamma_{n, r-1}}} \bigg| = 1,
	\]
	the sequence $(Y_k : k \geq M)$ satisfies the uniform Levinson's condition. Therefore, by
	\cite[Theorem 4.1]{Silva2004}, there is a sequence $(\Psi_k : k \geq M)$ such that
	\[
		\lim_{k \to \infty}\sup_{x \in K}{ \bigg\|\frac{\Psi_k(x)}
		{\prod_{j = M}^{k-1} \gamma_{2j+1, r-1}(x) \overline{\gamma_{2j, r-1}(x)}} - e_2\bigg\|} = 0.
	\]
	Hence, $(\Phi_n : n \geq 2M)$ defined by the formula 
	\[
		\Phi_{n}(x) = 
		\begin{cases}
			Q_{2k-1}(x) \Psi_k(x) & \text{if } n = 2k, \\
			(X_{2k}(x) + E_{2k}(x)) Q_{2k-1}(x) \Psi_k(x) & \text{if } n = 2k+1.
		\end{cases}
	\]
	together with
	\[
		\mu_n = 
		\begin{cases}
			1 & \text{for } n < 2 M, \\
			\gamma_{n, r-1} & \text{for } n \geq 2 M,
		\end{cases}
	\]
	satisfies \eqref{eq:14}. This completes the proof of the theorem.
\end{proof}

The following lemma guarantees that in the case of positive discriminant Theorem \ref{thm:2} can at least be applied
locally.
\begin{lemma}
	\label{lem:4}
	Suppose that $X$ is a continuous mapping defined on a closed interval $I \subset \RR$ with values in $\Mat(2, \RR)$
	that has positive discriminant on $I$. Let $\lambda_1, \lambda_2: I \rightarrow \RR$, be continuous functions
	so that $\lambda_1(x)$ and $\lambda_2(x)$ are the distinct eigenvalues of $X(x)$. Then for each 
	$x \in I$ there is an open interval $I_x$ containing $x$ such that
	\begin{enumerate}[(i), leftmargin=2em]
		\item
		for all $y \in I_x \cap I$,
		\[
			\big([X(y)]_{1,1} - \lambda_1(y)\big)\big([X(y)]_{2,2} - \lambda_2(y)\big) \neq 0,
		\]
	\end{enumerate}
	or
	\begin{enumerate}[(i), resume, leftmargin=2em]
		\item
		for all $y \in I_x \cap I$,
		\[
			\big([X(y)]_{1,1} - \lambda_2(y)\big)\big([X(y)]_{2,2} - \lambda_1(y)\big) \neq 0.
		\]
	\end{enumerate}
\end{lemma}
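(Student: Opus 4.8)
The plan is to reduce the lemma to an elementary continuity statement after recording one algebraic identity. Fix $y \in I$ and abbreviate $M = X(y)$, $\mu_1 = \lambda_1(y)$, $\mu_2 = \lambda_2(y)$. Since $M$ is a $2 \times 2$ matrix whose eigenvalues are $\mu_1$ and $\mu_2$, we have $[M]_{1,1} + [M]_{2,2} = \tr M = \mu_1 + \mu_2$, hence $[M]_{2,2} - \mu_2 = \mu_1 - [M]_{1,1}$ and $[M]_{2,2} - \mu_1 = \mu_2 - [M]_{1,1}$. Substituting, I obtain
\[
	\big([M]_{1,1} - \mu_1\big)\big([M]_{2,2} - \mu_2\big) = -\big([M]_{1,1} - \mu_1\big)^2
	\qquad\text{and}\qquad
	\big([M]_{1,1} - \mu_2\big)\big([M]_{2,2} - \mu_1\big) = -\big([M]_{1,1} - \mu_2\big)^2 .
\]
Consequently, the product appearing in item (i) is nonzero at $y$ exactly when $[X(y)]_{1,1} \neq \lambda_1(y)$, and the product in item (ii) is nonzero at $y$ exactly when $[X(y)]_{1,1} \neq \lambda_2(y)$.

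With this reformulation, fix $x \in I$. Because $\lambda_1(x)$ and $\lambda_2(x)$ are the \emph{distinct} eigenvalues of $X(x)$, the scalar $[X(x)]_{1,1}$ cannot coincide with both of them, so it differs from at least one. If $[X(x)]_{1,1} \neq \lambda_1(x)$, then the function $y \mapsto [X(y)]_{1,1} - \lambda_1(y)$ is continuous on $I$ and nonzero at $x$, hence nonzero on $I_x \cap I$ for some open interval $I_x$ containing $x$; by the first identity above, item (i) holds on $I_x \cap I$. In the remaining case $[X(x)]_{1,1} = \lambda_1(x)$, which forces $[X(x)]_{1,1} \neq \lambda_2(x)$, and running the same continuity argument with $\lambda_2$ in place of $\lambda_1$ produces an interval $I_x$ on which item (ii) holds. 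Since these two cases are exhaustive, the proof is complete.

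I do not expect any genuine obstacle here: the entire content is the trace identity $[M]_{1,1} + [M]_{2,2} = \mu_1 + \mu_2$, which turns each of the two products into the negative of a perfect square and thereby converts conditions (i) and (ii) into local non-vanishing conditions on the single continuous scalar function $[X(\cdot)]_{1,1} - \lambda_j(\cdot)$. The only thing to be careful about is that the two alternatives of the lemma match precisely the two eigenvalues that $[X(x)]_{1,1}$ could possibly avoid, which is immediate from $\lambda_1(x) \neq \lambda_2(x)$.
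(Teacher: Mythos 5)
Your proof is correct and rests on the same two ingredients as the paper's: the trace identity $[X]_{1,1}+[X]_{2,2}=\lambda_1+\lambda_2$ together with $\lambda_1(x)\neq\lambda_2(x)$, followed by continuity to pass from the point $x$ to a neighbourhood. Your reformulation of each product as $-\big([X]_{1,1}-\lambda_j\big)^2$ is a slightly slicker packaging of the paper's case analysis, but the argument is essentially identical.
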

\begin{proof}
	Let $x \in I$. Since $\discr X(x) > 0$, we have $\lambda_1(x) \neq \lambda_2(x)$. 
	By the continuity of $X$, it is enough to show that
	\[
		\big([X(x)]_{1,1} - \lambda_1(x)\big)\big([X(x)]_{2,2} - \lambda_2(x)\big) \neq 0,
	\]
	or
	\[
		\big([X(x)]_{1,1} - \lambda_2(x)\big)\big([X(x)]_{2,2} - \lambda_1(x)\big) \neq 0.
	\]
	If neither of the conditions is met, we would have
	\[
		[X(x)]_{1,1} = \lambda_1(x) \quad\text{and}\quad [X(x)]_{2,2} = \lambda_1(x),
	\]
	or
	\[
		[X(x)]_{1,1} = \lambda_2(x) \quad\text{and}\quad [X(x)]_{2,2} = \lambda_2(x).
	\]
	Thus $\tr X(x)$ equals $2 \lambda_1(x)$ or $2 \lambda_2(x)$, but neither of the situations is possible. 
\end{proof}
The following corollary gives a Levinson's type theorem in the case when the limit $\calX$ is a constant
matrix. It may be proved in much the same way as Theorem \ref{thm:2}. Here, the condition \eqref{eq:35} can be dropped
since $\calX$ is a constant matrix.
\begin{corollary}
	\label{cor:4}
	Let $(X_n : n \in \NN)$ be a sequence of matrices belonging to $\calD_1 \big(\GL(2, \RR)\big)$ 
	convergent to the matrix $\calX$. Let $(E_n : n \in \NN)$ be a sequence of continuous (or holomorphic) 
	mappings defined on a compact set $K \subset \CC$ with values in $\Mat(2, \CC)$, such that 
	\[
		\sum_{n = 1}^\infty \sup_K \|E_n\| < \infty.
	\]
	Suppose that $\discr \calX \neq 0$ and $\det \calX > 0$. If $\eta$ is an eigenvalue of $\calX$, then there
	are continuous (or holomorphic, respectively) mappings $\Phi_n: K \rightarrow \CC^2$, satisfying
	\[
		\Phi_{n+1} = (X_n + E_n) \Phi_n
	\]
	such that
	\[
		\lim_{n \to \infty}
		\sup_{x \in K}{\bigg\|
		\frac{\Phi_n(x)}{\prod_{j = 1}^{n-1} \mu_j} - v
		\bigg\|}=0
	\]
	where $v$ is the eigenvector of $\calX$ corresponding to $\eta$, and $\mu_n$ is the eigenvalue of $X_n$ such
	that
	\[
		\lim_{n \to \infty} |\mu_n - \eta| = 0.
	\]
\end{corollary}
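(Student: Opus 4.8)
The plan is to run the proof of Theorem~\ref{thm:2} in the case $r=1$. When $\discr\calX<0$ nothing has to be adapted, since the negative-discriminant part of that proof does not use \eqref{eq:35} at all. When $\discr\calX>0$, the single place where \eqref{eq:35} intervened --- keeping the denominators of the explicit diagonalizing matrices bounded away from $0$ --- becomes automatic because $\calX$ is a fixed matrix: its two eigenvalues $\lambda^+,\lambda^-$ (real, with $|\lambda^+|>|\lambda^-|>0$ since $\det\calX>0$) are distinct, so by Lemma~\ref{lem:4} applied to the constant mapping $X\equiv\calX$ one of its two alternatives holds, and after conjugating $X_n$, $E_n$ and the sought $\Phi_n$ by the permutation matrix $J$ from the proof of Theorem~\ref{thm:2} if necessary, we may assume $[\calX]_{1,1}\neq\lambda^+$ and $[\calX]_{2,2}\neq\lambda^-$, which is exactly \eqref{eq:35}. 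I would also note that in Corollary~\ref{cor:4} the matrices $X_n$ do not depend on the parameter $x$ --- only the $E_n$ do --- so all of the diagonalization below is performed once and for all, trivially uniformly in $x\in K$.

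So assume $\discr\calX>0$ (the case $\discr\calX<0$ being analogous and simpler). Since $X_n\to\calX$, for $n$ large $X_n$ has two eigenvalues $\mu_n^\pm$ with $\mu_n^\pm\to\lambda^\pm$, $\discr X_n$ bounded away from $0$, and $|[X_n]_{1,1}-\mu_n^+|$, $|[X_n]_{2,2}-\mu_n^-|$ bounded below by a fixed $\delta>0$ --- the analogue of \eqref{eq:41}, obtained here for free. I would then diagonalize $X_n=C_{n,0}D_{n,0}C_{n,0}^{-1}$ with $D_{n,0}=\diag(\mu_n^+,\mu_n^-)$ and $C_{n,0}$ given by the same formula as in the proof of Theorem~\ref{thm:2}. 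Because $(X_n)\in\calD_1$ and these denominators stay bounded below, Corollaries~\ref{cor:1} and \ref{cor:2} together with Lemma~\ref{lem:2} applied to $t\mapsto\sqrt t$ on an interval away from $0$ give $(\mu_n^\pm)\in\calD_1$ and $(C_{n,0}),(C_{n,0}^{-1}),(D_{n,0})\in\calD_1\big(\GL(2,\RR)\big)$. Since $r=1$, the iterated application of Theorem~\ref{thm:1} in the proof of Theorem~\ref{thm:2} is vacuous, and one simply sets $Q_m=C_{m,0}$, which converges to the fixed eigenvector matrix of $\calX$.

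From this point the proof of Theorem~\ref{thm:2} carries over verbatim. One forms the two-step recurrence $\Psi_{k+1}=(Y_k+R_k+F_k)\Psi_k$ with $Y_k=D_{2k+1,0}D_{2k,0}$ diagonal, $R_k$ built from $\Delta C_{2k,0}$ and $\Delta C_{2k-1,0}$, and $F_k$ built from $E_{2k}$ and $E_{2k+1}$; then $\sum_k\|R_k\|<\infty$ because $(C_{n,0})\in\calD_1$, and $\sum_k\sup_K\|F_k\|<\infty$ by the summability of $\sum_n\sup_K\|E_n\|$ and the boundedness of $X_n$ and $Q_m$. The diagonal sequence $(Y_k)$ satisfies the uniform Levinson condition of \cite{Silva2004}: one has $|\mu_n^+/\mu_n^-|\ge1+\delta'$ for a fixed $\delta'>0$, so the relevant products of ratios grow, respectively decay, geometrically, and the condition is moreover parameter-independent. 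Hence \cite[Theorem~4.1]{Silva2004} produces $(\Psi_k)$ converging, after the appropriate rescaling, to $e_1$ or $e_2$ uniformly on $K$ (both choices of $\eta$ being treated as in Theorem~\ref{thm:2}); and since the parameter enters only additively through the $E_n$, hence through $F_k$, this preserves continuity (or holomorphy) in $x$. Reconstructing $\Phi_n$ from $\Psi_k$ as in Theorem~\ref{thm:2} and undoing the initial $J$-conjugation gives the $\Phi_n$, $\mu_n$ and $v$ claimed in Corollary~\ref{cor:4}, with $\mu_n$ the eigenvalue of $X_n$ tending to $\eta$ (set to $1$ for the finitely many small $n$) and $v$ the corresponding eigenvector of $\calX$.

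I expect no genuine obstacle: the corollary is a transcription of Theorem~\ref{thm:2}, and the only point that will require care is the reduction in the first paragraph --- recognizing that \eqref{eq:35} is precisely what Lemma~\ref{lem:4} delivers for a \emph{constant} matrix, so that no hypothesis beyond $\discr\calX\neq0$ and $\det\calX>0$ survives. (Equivalently: after conjugating everything by a fixed matrix so that $\calX$ itself is diagonal, $X_n$ becomes nearly diagonal and the denominators occurring in the diagonalizations of Theorem~\ref{thm:2} converge to the nonzero number $\lambda^+-\lambda^-$.)
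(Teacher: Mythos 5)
Your proposal is correct and follows exactly the route the paper intends: the paper's own ``proof'' of Corollary~\ref{cor:4} is the one-line remark that it may be proved in much the same way as Theorem~\ref{thm:2}, with condition~\eqref{eq:35} dropped because $\calX$ is constant. Your reduction of \eqref{eq:35} to Lemma~\ref{lem:4} applied to the constant mapping (with a $J$-conjugation if needed), and your observation that the $x$-dependence enters only through the $E_n$, are precisely the details the authors leave implicit.
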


\subsection{Perturbations of the identity}

\begin{theorem}
	\label{thm:3}
	Let $(X_n : n \in \NN)$ be a sequence of continuous mappings defined on $\RR$
	with values in $\GL(2, \RR)$ that uniformly converges on a compact interval $K$ to $\sigma \Id$ for a certain
	$\sigma \in \{-1, 1\}$. Suppose that there is a sequence of positive numbers $(\gamma_n : n \in \NN_0)$
	such that $R_n = \gamma_n(X_n - \sigma \Id)$ converges uniformly on $K$ to the mapping $\calR$ satisfying
	$\discr \calR(x) \neq 0$ for all $x \in K$. If $\discr \calR > 0$, we additionally assume that
	\begin{equation}
		\label{eq:110}
		\sum_{n=0}^\infty \frac{1}{\gamma_n} = \infty.
	\end{equation}
	Let $(E_n : n \in \NN)$ be a sequence of continuous mappings defined on $\RR$ with values in $\Mat(2, \CC)$, 
	such that
	\begin{equation}
		\label{eq:37}
		\sum_{n = 1}^\infty \sup_K{\|E_n\|} < \infty.
	\end{equation}
	If $(R_n : n \in \NN)$ belongs to $\calD_{1, 0}\big(K, \Mat(2, \RR)\big)$ and $\eta$ is a continuous eigenvalue of
	$\calR$, then there are $n_0 \geq 1$ and continuous mappings $\Phi_n : K \rightarrow \CC^2$, 
	$\mu_n : K \rightarrow \CC$, and $v : K \rightarrow \CC^2$ satisfying
	\[
		\Phi_{n+1} = (X_n  + E_n) \Phi_n,
	\]
	such that
	\begin{equation}
		\label{eq:17}
		\lim_{n \to \infty}
		\sup_{x \in K}{\bigg\|\frac{\Phi_n(x)}{\prod_{j = n_0}^{n-1} 
		\big( \sigma + \gamma_j^{-1} \mu_j(x)\big)} - v(x) \bigg\|} = 0
	\end{equation}
	where for each $x \in K$, $v(x)$ is an eigenvector of $\calR(x)$ corresponding to $\eta(x)$, and $\mu_n(x)$
	is an eigenvalue of $R_n(x)$ such that
	\[
		\lim_{n \to \infty} \sup_{x \in K}{\big|\mu_n(x) - \eta(x)\big|} = 0.
	\]
\end{theorem}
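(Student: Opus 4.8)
The plan is to follow the scheme of the proof of Theorem~\ref{thm:2}, which is much simpler here because $r=1$: no iteration of Theorem~\ref{thm:1} is required, a single diagonalization will do. Two preliminary observations first. Since $X_n\to\sigma\Id$ uniformly on $K$ while $R_n=\gamma_n(X_n-\sigma\Id)\to\calR$ and $\calR(x)\neq0$ for every $x$ (because $\discr\calR(x)\neq0$), we must have $\gamma_n\to\infty$; hence for all large $n$ the eigenvalues of $X_n$ are $\sigma+\gamma_n^{-1}\lambda_n^\pm$, where $\lambda_n^\pm$ are the eigenvalues of $R_n$, and $\discr X_n=\gamma_n^{-2}\discr R_n$. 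Moreover, $\discr\calR$ is continuous and non-vanishing on the \emph{interval} $K$, so either $\discr\calR>0$ throughout $K$ or $\discr\calR<0$ throughout $K$; the two cases are handled separately, with $\lambda_n^\pm$ taken real and $\lambda_n^+>\lambda_n^-$ in the first, complex conjugate in the second.

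\emph{Diagonalization.} The first step is to produce a bounded sequence $(C_n)$ of continuous maps on $K$ with values in $\GL(2,\RR)$ (or $\GL(2,\CC)$ in the negative case) with $\sup_n\sup_K\norm{C_n^{-1}}<\infty$ and $(C_n)\in\calD_{1,0}$, such that $C_n^{-1}R_nC_n=\diag(\lambda_n^+,\lambda_n^-)$, whence $C_n^{-1}X_nC_n=D_n:=\diag(\sigma+\gamma_n^{-1}\lambda_n^+,\sigma+\gamma_n^{-1}\lambda_n^-)$. In the negative-discriminant case $\calR$ has no real invariant line, so $[\calR(x)]_{1,2}\neq0$ on $K$ and the complex conjugator diagonalizing $R_n$, as in the proof of Theorem~\ref{thm:2} for negative discriminant, is well defined on all of $K$. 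In the positive-discriminant case the analogue of~\eqref{eq:35} for $\calR$ need not hold on the whole of $K$; here I would use Lemma~\ref{lem:4} to cover $K$ by finitely many closed subintervals on each of which the conjugator from the proof of Theorem~\ref{thm:2} for positive discriminant (possibly after a conjugation by $J$) is available, and then assemble a single generalized eigenvector on $K$ from the local pieces obtained below, using that on overlaps the local diagonalizers differ only by a diagonal similarity, since they all conjugate $X_n$ to the same $D_n$, whose eigenvalue functions are globally defined on $K$. (Alternatively one may choose continuous eigenvector fields of $R_n$ directly on $K$, the eigenline bundles over the interval $K$ being trivial.) Boundedness of $(C_n)$ and $(C_n^{-1})$ follows from $\sup_n\sup_K\norm{R_n}<\infty$ and the uniform gap $\abs{\discr R_n}\geq\delta>0$ for large $n$ (valid since $\discr R_n\to\discr\calR$ uniformly and $\discr\calR$ is bounded away from $0$ on the compact $K$), while $(C_n)\in\calD_{1,0}$ follows from $(R_n)\in\calD_{1,0}$ by Corollaries~\ref{cor:1} and~\ref{cor:2}, the entries of $C_n$ being rational functions of those of $R_n$ with denominators bounded away from $0$. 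Finally $C_n$ converges uniformly on $K$ to a diagonalizer $C_\infty$ of $\calR$.

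\emph{Reduction and the Levinson dichotomy.} Substituting $\Phi_n=C_n\Psi_n$ in $\Phi_{n+1}=(X_n+E_n)\Phi_n$ and using $X_nC_n=C_nD_n$ gives $\Psi_{n+1}=(D_n+\hat E_n)\Psi_n$ with $\hat E_n=-C_{n+1}^{-1}(\Delta C_n)D_n+C_{n+1}^{-1}E_nC_n$; since $(C_n),(C_n^{-1}),(D_n)$ are bounded and $\sum\sup_K\norm{\Delta C_n}<\infty$, $\sum\sup_K\norm{E_n}<\infty$ by~\eqref{eq:37}, we get $\sum\sup_K\norm{\hat E_n}<\infty$. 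It remains to verify that $(D_n)$ satisfies the uniform Levinson condition of \cite[Definition~2.1]{Silva2004} on $K$. In the negative-discriminant case the two diagonal entries of $D_n$ are complex conjugates, hence of equal modulus, so the condition holds trivially (both orderings fall under the ``bounded'' alternative); this is why~\eqref{eq:110} is not needed there. In the positive-discriminant case, for all large $n$ the two numbers $1+\sigma\gamma_n^{-1}\lambda_n^\pm$ are positive and
\[
	\log\frac{\abs{\sigma+\gamma_n^{-1}\lambda_n^+}}{\abs{\sigma+\gamma_n^{-1}\lambda_n^-}}
	=\sigma\gamma_n^{-1}\big(\lambda_n^+-\lambda_n^-\big)\big(1+o(1)\big)
	=\sigma\gamma_n^{-1}\sqrt{\discr R_n}\,\big(1+o(1)\big)
\]
uniformly on $K$, where $\sqrt{\discr R_n}\geq c>0$ uniformly on $K$; hence $\sum\gamma_n^{-1}=\infty$ (by~\eqref{eq:110}) forces the partial products $\prod_{j=m}^{n-1}\abs{\sigma+\gamma_j^{-1}\lambda_j^+}/\abs{\sigma+\gamma_j^{-1}\lambda_j^-}$ to tend uniformly on $K$ to $\infty$ when $\sigma=1$ and to $0$ when $\sigma=-1$, with the reciprocal products uniformly bounded once $m$ is large. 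This is the uniform Levinson condition. I expect this passage --- extracting a \emph{uniform} exponential dichotomy from the bare divergence $\sum\gamma_n^{-1}=\infty$, together with carrying out the diagonalization globally on $K$ in the absence of~\eqref{eq:35} --- to be the main obstacle.

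\emph{Conclusion.} Let $\mu_n$ be the eigenvalue of $R_n$ converging uniformly to $\eta$ (one of $\lambda_n^\pm$, the same one for every $x$), and $i\in\{1,2\}$ the corresponding index, so $[D_n]_{i,i}=\sigma+\gamma_n^{-1}\mu_n$. By the uniform Levinson theorem \cite[Theorem~4.1]{Silva2004} --- which, as recalled in the proof of Theorem~\ref{thm:2}, preserves continuity in the parameter --- there are $n_0\geq1$ and continuous maps $\Psi_n:K\to\CC^2$ with $\Psi_{n+1}=(D_n+\hat E_n)\Psi_n$ and
\[
	\lim_{n\to\infty}\sup_{x\in K}\bigg\|\frac{\Psi_n(x)}{\prod_{j=n_0}^{n-1}\big(\sigma+\gamma_j^{-1}\mu_j(x)\big)}-e_i\bigg\|=0 .
\]
Then $\Phi_n=C_n\Psi_n$ (assembled from the local pieces as in the diagonalization step, in the positive case) satisfies $\Phi_{n+1}=(X_n+E_n)\Phi_n$, and setting $v=C_\infty e_i$ --- the $i$-th column of $C_\infty$, an eigenvector of $\calR$ associated with $\eta$ --- relation~\eqref{eq:17} follows from $C_n\to C_\infty$ uniformly together with the uniform boundedness of $(C_n)$. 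That $\mu_n\to\eta$ uniformly on $K$ is immediate from $R_n\to\calR$ uniformly and the uniform spectral gap.
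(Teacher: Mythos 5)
Your overall strategy is the paper's: a single diagonalization $X_n=C_nD_nC_n^{-1}$ built from the eigenvalues of $R_n$, the reduction $\Psi_{n+1}=(D_n+\hat E_n)\Psi_n$ with exactly the error term $-C_{n+1}^{-1}(\Delta C_n)D_n+C_{n+1}^{-1}E_nC_n$, verification of the uniform Levinson condition (your expansion of $\log\abs{\lambda_n^+/\lambda_n^-}$ is equivalent to the paper's estimate \eqref{eq:126}, and the negative-discriminant case is dispatched by $\abs{\lambda_n^+}=\abs{\lambda_n^-}$ just as in the paper), an appeal to \cite[Theorem 4.1]{Silva2004} with continuity in the parameter, and $v=C_\infty e_i$. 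The one place you deviate is the step you yourself flag as the main obstacle: globalizing over $K$ in the positive-discriminant case when the analogue of \eqref{eq:35} fails somewhere on $K$. The paper also covers $K$ by finitely many intervals via Lemma \ref{lem:4}, but it glues the local \emph{solutions}, not the diagonalizers: it takes a continuous partition of unity $\{\psi_j\}$ subordinate to the cover and sets $\Phi_n=\sum_j\psi_j\Phi_n^{(j)}$, $v=\sum_j\psi_jv^{(j)}$, which is again a solution of the linear recurrence because the $\psi_j$ are $n$-independent scalars, and inherits the asymptotics \eqref{eq:46} term by term. Your proposed mechanism --- that on overlaps the local diagonalizers differ only by a diagonal similarity --- does not by itself produce a single global $\Phi_n$: the solutions delivered by Silva's theorem on two overlapping subintervals need not coincide even when the diagonalizations agree up to a diagonal factor (in particular, for the dominant eigenvalue the asymptotics determine the solution only modulo an additive recessive solution), so you cannot simply declare the local pieces consistent. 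Your alternative (a global continuous eigenvector field of $R_n$ on the interval $K$) could be made to work but would require re-deriving the $\calD_{1,0}$ regularity and the uniform bounds for that field. The partition-of-unity device closes this gap cleanly; with it inserted, your argument matches the paper's proof.
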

\begin{proof}
	Let us first consider the case of positive discriminant. There is $\delta > 0$ such that for all $x \in K$,
	\[
		\discr \calR(x) \geq 2\delta^2.
	\]
	Then the matrix $\calR(x)$ has two eigenvalues
	\[
		\xi^+(x) = \frac{\tr \calR (x) + \sigma \sqrt{\discr \calR(x)}}{2},
		\qquad\text{and}\qquad
		\xi^-(x) = \frac{\tr \calR (x) - \sigma \sqrt{\discr \calR(x)}}{2}.
	\]
	Since $(R_n : n \in \NN)$ converges uniformly on $K$, there is $M \geq 1$, such that for all $n \geq M$ and
	$x \in K$,
	\begin{equation} 
		\label{eq:108}
		\discr R_n(x) \geq \delta^2,
		\qquad\text{and}\qquad
		\gamma_n \geq \delta.
	\end{equation}
	In particular, the matrix $R_{n}$ has two eigenvalues
	\begin{equation}
		\label{eq:107}
		\xi^+_n(x) = \frac{\tr R_n(x) + \sigma \sqrt{\discr R_n(x)}}{2}, 
		\qquad\text{and}\qquad
		\xi^-_n(x) = \frac{\tr R_n(x) - \sigma \sqrt{\discr R_n(x)}}{2}.
	\end{equation}
	Now, let us consider the collection of intervals $\{I_x : x \in K\}$ determined in Lemma \ref{lem:4} for 
	the mapping $\calR$. By compactness of $K$ we can find a finite subcollection $\{I_1, \ldots, I_J\}$ that covers $K$.
	Let us consider the case $\eta = \xi^-$. It is clear that
	\[
		\lim_{n \to \infty} \sup_{x \in K}{\big\|\xi^-_n(x) - \eta(x)\big\|} = 0.
	\]
	Suppose that on each $K_j = \overline{I_j} \cap K$, one can find $\Phi_n^{(j)}$ and $v^{(j)}$ so that
	\begin{equation}
		\label{eq:46}
		\lim_{n \to \infty} \sup_{x \in K_j}{
		\bigg\|
		\frac{\Phi^{(j)}_n(x)}{\prod_{m = n_0}^{n-1} \big(\sigma + \gamma_m^{-1} \mu_m(x)\big)}
		-
		v^{(j)}(x)
		\bigg\|
		}
		=0.
	\end{equation}
	Let $\{\psi_1, \ldots, \psi_J\}$ be the continuous partition of unity subordinate to the covering
	$\{I_1, \ldots , I_J\}$, that is $\psi_j$ is a continuous non-negative function with $\supp \psi_j \subset I_j$, so
	that
	\[
		\sum_{j = 1}^J \psi_j \equiv 1.
	\]
	We set
	\[
		\Phi_n = \sum_{j = 1}^J \Phi^{(j)}_n \psi_j, \qquad\text{and}\qquad
		v = \sum_{j = 1}^J v^{(j)} \psi_j.
	\]
	Observe that $v(x)$ is an eigenvector of $\calR(x)$ corresponding to $\eta(x)$ for all $x \in K$. Moreover,
	since $\psi_j$ is supported inside $I_j$,
	\begin{align*}
		\lim_{n \to \infty}
		\sup_{x \in K}{
		\bigg\|
		\frac{\Phi_n(x)}{\prod_{m = n_0}^{n-1} \big(\sigma + \gamma_m^{-1} \mu_m(x)\big)}
		-
		v(x)
		\bigg\|} 
		\leq
		\lim_{n \to \infty}
		\sum_{j = 1}^J
		\sup_{x \in K_j}{
		\bigg\|
		\frac{\Phi_n^{(j)}(x)}{\prod_{m = n_0}^{n-1} \big(\sigma + \gamma_m^{-1} \mu_m(x)\big)}
		-
		v^{(j)}(x)
		\bigg\|}
		=
		0.
	\end{align*}
	Therefore, it is sufficient to prove \eqref{eq:46} for $K = K_j$ where $j \in \{ 1, \ldots, J \}$.
	To simplify the notation, we drop the dependence on $j$. Without loss of generality, we can assume that 
	for each $x \in K$,
	\[
		\big|[\calR(x)]_{1,1} - \xi^+(x)\big| \geq 2\delta,
		\qquad\text{and}\qquad
		\big|[\calR(x)]_{2,2} - \xi^-(x) \big| \geq 2\delta.
	\]
	Since $(R_n : n \in \NN)$ converges to $\calR$ uniformly on $K$, there are $M \geq 1$ such that for all
	$x \in K$ and $n \geq M$,
	\begin{equation}
		\label{eq:49}
		\big| [R_n(x)]_{1,1} - \xi^+_n(x) \big| \geq \delta,
		\qquad\text{and}\qquad
		\big| [R_n(x)]_{2,2} - \xi^-_n(x) \big| \geq \delta.
	\end{equation}
	Now, we can define
	\[
		C_n =
		\begin{pmatrix}
			\frac{[R_n]_{1,2}}{\xi^+_n - [R_n]_{1,1}} & 1 \\
			1 & \frac{[R_n]_{2,1}}{\xi^-_n-[R_n]_{2,2}}
		\end{pmatrix},
		\qquad\text{and}\qquad
		\tilde{D}_n(x) =
		\begin{pmatrix}
			\xi^+_n(x) & 0 \\
			0 & \xi^-_n(x)
		\end{pmatrix}.
	\]
	Then
	\[
		R_{n}(x) = C_n(x) \tilde{D}_n(x) C_n^{-1}(x),
	\]
	and in view of \eqref{eq:49}, \eqref{eq:108}, \eqref{eq:107}, Corollary~\ref{cor:1} and Lemma~\ref{lem:2}, 
	we conclude that
	\begin{equation}
		\label{eq:113}
		(C_n : n \geq M) \in \calD_{1,0} \big( K, \GL(2, \RR) \big).
	\end{equation}
	Notice that
	\begin{equation}
		\label{eq:16}
		\lim_{n \to \infty} C_n = 
		\begin{pmatrix}
			\frac{[\calR]_{1,2}}{\xi^+ - [\calR]_{1,1}} & 1 \\
			1 & \frac{[\calR]_{2,1}}{\xi^- - [\calR]_{2,2}}		
		\end{pmatrix}
	\end{equation}
	uniformly on $K$. Since
	\[
		X_{n} = \sigma \Id + \frac{1}{\gamma_n} R_{n},
	\]	
	we obtain
	\[
		X_{n}(x) = C_n(x) D_n(x) C_n^{-1}(x)
	\]
	where
	\[
		D_n = \sigma \Id + \frac{1}{\gamma_n} \tilde{D}_n.
	\]
	Hence, eigenvalues of $X_n$ are
	\begin{equation}
		\label{eq:52}
		\lambda^+_n = \sigma + \frac{1}{\gamma_n} \xi^+_n, \qquad\text{and}\qquad
		\lambda^-_n = \sigma + \frac{1}{\gamma_n} \xi^-_n.
	\end{equation}
	Let us now consider the recurrence equation
	\begin{align*}
		\Psi_{n+1} &=
		C_{n+1}^{-1} (X_n + E_n)C_n \Psi_n\\
		&=(D_n + F_n) \Psi_n 
	\end{align*}
	where
	\[
		F_n = -C^{-1}_{n+1} \big( \Delta C_n \big) D_n + C_{n+1}^{-1} E_n C_n.
	\]
	By \eqref{eq:16}, we easily see that
	\[
		\| F_n \| \leq c \big(\| \Delta C_n \| + \|E_n\|\big),
	\]
	which together with \eqref{eq:113} and \eqref{eq:37} gives
	\[
		\sum_{n=M}^\infty \sup_K \| F_n \| < \infty.
	\]
	Next, in view of \eqref{eq:52}, \eqref{eq:107} and \eqref{eq:108}, for $n \geq M$,
	\begin{align} \label{eq:126}
		\bigg| \frac{\lambda^+_n}{\lambda^-_n} \bigg|
		= 
		\bigg| 1 + \frac{1}{\gamma_n} \frac{\sqrt{\discr R_n}}{1 + \frac{\sigma}{\gamma_n} \xi_n^-}\bigg| 
		\geq
		1 + \frac{\sqrt{\discr R_n}}{2 \gamma_n}
		\geq
		\exp\bigg(\frac{\delta}{4 \gamma_n} \bigg),
	\end{align}
	after possibly enlarging $M$. Therefore, for all $n_2 > n_1 \geq M$,
	\[
		\prod_{n=n_1}^{n_2} 
		\bigg| \frac{\lambda^+_n}{\lambda^-_n} \bigg|
		\geq 
		\exp\bigg(\frac{\delta}{4} \sum_{n = n_1}^{n_2} \frac{1}{\gamma_n} \bigg).
	\]
	Hence, \eqref{eq:110} guarantees that the sequence $(D_n : n \geq M)$ satisfies the uniform Levinson's
	condition. Let us remind that we are considering $\eta = \xi^-$. In view of \cite[Theorem 4.1]{Silva2004}, there
	is a sequence $(\Psi_n : n \geq M)$ such that
	\[
		\lim_{n \to \infty} \sup_{x \in K}{
		\bigg\|
		\frac{\Psi_n(x)}{\prod_{j=M}^{n-1} \lambda_{j}^-(x)} - e_2
		\bigg\|}
		=
		0.
	\]
	Now, for $x \in K$ and $n \geq M$, we set
	\[
		\Phi_n(x) = C_n(x) \Psi_n(x).
	\]
	It is easy to verify that $(\Phi_n : n \geq M)$ satisfies 
	\[
		\Phi_{n+1} = (X_n + E_n) \Phi_n.
	\]
	Setting
	\[
		v = 
		\begin{pmatrix}
			1 \\
			\frac{[\calR]_{2,1}}{\xi^-- [\calR]_{2,2}}
		\end{pmatrix}
	\]
	by \eqref{eq:16}, we get 
	\[
		\lim_{n \to \infty} \sup_{x \in K}{\big\| C_n(x) e_2 - v(x) \big\|} = 0,
	\]
	which completes the proof of \eqref{eq:17} for $K = K_j$, and the case of positive discriminant follows.
	
	When $\discr \calR < 0$ on $K$, the reasoning is similar. Since the matrix $\calR$
	has real entries, $[\calR(x)]_{1,2} \neq 0$ for all $x \in K$. Therefore, for $n \geq M$, we can set
	\[
		C_n =
		\begin{pmatrix}
			1 & 1 \\
			\frac{\xi^+_n - [R_n]_{1,1}}{[R_n]_{1, 2}} & \frac{\xi^-_n - [R_n]_{1,1}}{[R_n]_{1, 2}}
		\end{pmatrix}
	\]
	where
	\[
		\xi^+_n(x) = \frac{\tr R_n(x) + i \sqrt{|\discr R_n(x)|}}{2}, 
		\qquad\text{and}\qquad
		\xi^-_n(x) = \frac{\tr R_n(x) - i \sqrt{|\discr R_n(x)}|}{2}.
	\]
	Since
	\[
		\bigg| \frac{\lambda^+_n}{\lambda^-_n} \bigg| = 1,
	\]
	the sequence $(D_n : n \geq M)$ satisfies the uniform Levinson's condition. The rest of the proof runs as before.
\end{proof}

The method of the proof used in Theorem \ref{thm:3}, can be also applied in the case of different eigenvalues
and $r = 1$. In particular, the condition \eqref{eq:35} can be dropped.

The proof of the following corollary is analogous to the proof of Theorem \ref{thm:3}.
\begin{corollary}
	\label{cor:5}
	Let $(X_n : n \in \NN)$ be a sequence of matrices in $\GL(2, \RR)$ convergent to the matrix $\sigma \Id$
	for a certain $\sigma \in \{-1, 1\}$. Suppose that there is a sequence of positive numbers $(\gamma_n : n \in \NN_0)$ 
	such that $R_n = \gamma_n(X_n - \sigma \Id)$ converges to the matrix $\calR$ satisfying $\discr \calR \neq 0$.
	If $\discr \calR > 0$, we additionally assume
	\[
		\sum_{n = 0}^\infty \frac{1}{\gamma_n} = \infty.
	\]
	Let $(E_n : n \in \NN)$ be is a sequence of continuous (or holomorphic) mappings on a compact set $K \subset \CC$ with values in
	$\Mat(2, \CC)$, such that
	\[
		\sum_{n = 1}^\infty \sup_K \|E_n\| < \infty.
	\]
	If $(R_n: n \in \NN)$ belongs to $\calD_{1, 0}(\Mat(2, \RR))$, and $\eta$ is an eigenvalue of $\calR$, then there
	are $n_0 \geq 1$ and continuous (or holomorphic, respectively) mappings $\Phi_n: K \rightarrow \CC^2$, satisfying
	\[
		\Phi_{n+1} = (X_n + E_n) \Phi_n,
	\]
	and such that
	\[
		\lim_{n \to \infty}
		\sup_{x \in K}{
		\bigg\|
		\frac{\Phi_n(x)}{\prod_{j = n_0}^{n-1} (\sigma + \gamma_j^{-1} \mu_j)} - v
		\bigg\|}=0
	\]
	where $v$ is an eigenvector of $\calR$ corresponding to $\eta$, $\mu_n$ is the eigenvalue of $R_n$ such that
	\[
		\lim_{n \to \infty} |\mu_n - \eta| = 0.
	\]
\end{corollary}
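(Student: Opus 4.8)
The plan is to repeat, almost verbatim, the argument from the proof of Theorem~\ref{thm:3} in the case $r=1$, discarding the part of that argument which is needed only because there the limiting matrix $\calR(x)$ varies with the spectral parameter. Indeed, in the present statement $X_n$ and $R_n$ are fixed matrices, so the eigenframe of $\calR$ cannot degenerate; hence there is no need for Lemma~\ref{lem:4} nor for the partition-of-unity localization --- a single admissible diagonalizing frame serves on all of $K$, and the spectral parameter enters only through the perturbations $E_n$.

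First I would diagonalize. Let $\xi^{\pm}$ be the eigenvalues of $\calR$; they are distinct since $\discr\calR\neq 0$, and they form a conjugate pair when $\discr\calR<0$. After interchanging the two coordinates if necessary, we may assume that the diagonalizing frames $C_n$ of $R_n$ used in the proof of Theorem~\ref{thm:3} --- built from the entries of $R_n$ and its eigenvalues $\xi_n^{\pm}$ --- are well defined, bounded with bounded inverses for all large $n$, and convergent to the eigenframe of $\calR$. Since $R_n\to\calR$ and $(R_n)\in\calD_{1,0}(\Mat(2,\RR))$, Corollaries~\ref{cor:1} and \ref{cor:2} together with Lemma~\ref{lem:2} give $(C_n)\in\calD_{1,0}$. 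Writing $X_n=\sigma\Id+\gamma_n^{-1}R_n$ we obtain $X_n=C_n D_n C_n^{-1}$ with $D_n=\diag(\lambda_n^{+},\lambda_n^{-})$ and $\lambda_n^{\pm}=\sigma+\gamma_n^{-1}\xi_n^{\pm}$.

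Next I would reduce the recursion and verify the Levinson condition. The substitution $\Phi_n=C_n\Psi_n$ turns $\Phi_{n+1}=(X_n+E_n)\Phi_n$ into $\Psi_{n+1}=(D_n+F_n)\Psi_n$ with $F_n=-C_{n+1}^{-1}(\Delta C_n)D_n+C_{n+1}^{-1}E_n C_n$; as $(C_n)\in\calD_{1,0}$ and $\sum_n\sup_K\|E_n\|<\infty$, this gives $\sum_n\sup_K\|F_n\|<\infty$. If $\discr\calR<0$ then $\lambda_n^{-}=\overline{\lambda_n^{+}}$, so $|\lambda_n^{+}/\lambda_n^{-}|=1$; if $\discr\calR>0$ then for $n$ large $|\lambda_n^{+}/\lambda_n^{-}|\geq 1+\tfrac{1}{2\gamma_n}\sqrt{\discr R_n}\geq\exp(c/\gamma_n)$, whence $\sum_n\gamma_n^{-1}=\infty$ forces $\prod_n|\lambda_n^{+}/\lambda_n^{-}|=\infty$. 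In both cases $(D_n)$ satisfies the uniform Levinson's condition. Taking $\eta=\xi^{-}$ (the case $\eta=\xi^{+}$ is symmetric, after interchanging the coordinates), the continuous --- respectively holomorphic --- version of \cite[Theorem~4.1]{Silva2004} produces $n_0\geq 1$ and continuous (holomorphic) maps $\Psi_n\colon K\to\CC^2$ with $\Psi_{n+1}=(D_n+F_n)\Psi_n$ and $\Psi_n/\prod_{j=n_0}^{n-1}\lambda_j^{-}\to e_2$ uniformly on $K$; the holomorphy is inherited from that of $E_n$ through $F_n$, exactly as in the proof of Theorem~\ref{thm:2}. Setting $\Phi_n=C_n\Psi_n$ one has $\Phi_{n+1}=(X_n+E_n)\Phi_n$, and with $v=\lim_n C_n e_2$ --- an eigenvector of $\calR$ for $\xi^{-}$ --- and $\mu_n=\xi_n^{-}$, so that $\sigma+\gamma_n^{-1}\mu_n=\lambda_n^{-}$ and $\mu_n\to\xi^{-}$, the asserted convergence follows.

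I do not expect a genuine obstacle: every ingredient already appears in the proof of Theorem~\ref{thm:3}, and the lack of parameter dependence of $X_n$ and $R_n$ removes that proof's only delicate point, namely the possible degeneration of the eigenframe of $\calR(x)$ and the covering argument it requires. The only steps still demanding attention are routine: deducing $(C_n)\in\calD_{1,0}$ and then $\sum_n\sup_K\|F_n\|<\infty$, and checking the uniform Levinson's condition as above.
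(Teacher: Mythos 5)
Your proposal is correct and follows essentially the same route the paper intends: the paper itself only remarks that Corollary~\ref{cor:5} is ``analogous to the proof of Theorem~\ref{thm:3}'', and you have carried out exactly that adaptation, rightly observing that the constancy of $X_n$ and $R_n$ makes Lemma~\ref{lem:4} and the partition-of-unity localization unnecessary. The diagonalization, the reduction to $\Psi_{n+1}=(D_n+F_n)\Psi_n$, the verification of the uniform Levinson condition in both discriminant cases, and the appeal to \cite[Theorem~4.1]{Silva2004} all match the argument of Theorem~\ref{thm:3}.
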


In the following proposition we describe a way to estimate the denominator in \eqref{eq:17}.
\begin{proposition} \label{prop:7}
	Let $(X_n : n \in \NN)$ be a sequence of mappings defined on $\RR$ with values in $\GL(2, \RR)$ convergent 
	on a compact set $K$ to $\sigma \Id$ for a certain $\sigma \in \{-1, 1\}$. Suppose that there is a sequence of
	positive numbers $(\gamma_n : n \in \NN)$ satisfying
	\[
		\lim_{n \to \infty} \Gamma_n = \infty \qquad \text{where} \qquad
		\Gamma_n = \sum_{j=1}^n \frac{1}{\gamma_j},
	\]
	such that $R_n = \gamma_n (X_n - \sigma \Id)$ converges uniformly on $K$ to the mapping $\calR$. Assume that
	$\discr \calR(x) > 0$ for all $x \in K$, and
	\begin{equation}
		\label{eq:50}
		\sum_{n=1}^\infty \Gamma_n \cdot \sup_{x \in K} \| R_{n+1}(x) - R_n(x) \| < \infty.
	\end{equation}
	Then there is $n_0$ such that for all $n \geq n_0$, and $x \in K$,
	\begin{equation}
		\label{eq:38}
		\prod_{j = n_0}^n \big|\sigma + \gamma_j^{-1} \mu_j^-(x) \big|^2 \asymp
		\exp \Big(\Gamma_n \big( \sigma \tr \calR(x) - \sqrt{\discr \calR(x)} \big) \Big)
	\end{equation}
	and
	\begin{equation}
		\label{eq:39}
		\prod_{j = n_0}^n \big|\sigma + \gamma_j^{-1} \mu_j^+(x) \big|^2 \asymp
		\exp \Big(\Gamma_n \big( \sigma \tr \calR(x) + \sqrt{\discr \calR(x)} \big) \Big)
	\end{equation}
	where
	\begin{equation}
		\label{eq:56}
		\mu_j^- = \frac{1}{2} \Big(\tr R_{j} - \sigma \sqrt{\discr R_{j}}\Big),
		\qquad\text{and}\qquad
		\mu_j^+ = \frac{1}{2}\Big(\tr R_{j} + \sigma \sqrt{\discr R_{j}}\Big).
	\end{equation}
	The implicit constants in \eqref{eq:38} and \eqref{eq:39} are independent of $x$ and $n$.
\end{proposition}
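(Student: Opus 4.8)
The plan is to pass to logarithms and compare the resulting sum to $2\sigma\xi^{\pm}(x)\,\Gamma_n$, where $\xi^{\pm}(x)=\tfrac12\bigl(\tr\calR(x)\pm\sigma\sqrt{\discr\calR(x)}\bigr)$ are the eigenvalues of $\calR(x)$; note $2\sigma\xi^{\pm}=\sigma\tr\calR\pm\sqrt{\discr\calR}$, which is the exponent appearing in \eqref{eq:38}--\eqref{eq:39}. Since $\discr\calR>0$ on the compact set $K$, fix $\delta>0$ with $\discr\calR\ge 2\delta^{2}$ on $K$; by the uniform convergence $R_n\to\calR$ there is $M$ with $\discr R_n\ge\delta^{2}$ on $K$ for $n\ge M$, so for such $n$ the matrix $R_n(x)$ has the real eigenvalues $\mu_n^{\pm}(x)$ of \eqref{eq:56}, the sequence $(\mu_n^{\pm})$ is uniformly bounded on $K$, and, $t\mapsto\sqrt t$ being Lipschitz on $[\delta^{2},\infty)$, $\mu_n^{\pm}\to\xi^{\pm}$ uniformly on $K$. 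Moreover $\discr\calR>0$ forces $\calR(x)\ne 0$ on $K$, hence from $R_n=\gamma_n(X_n-\sigma\Id)$ and $X_n-\sigma\Id\to 0$ uniformly we get $\gamma_n\to\infty$. Thus we may enlarge $M$ to an $n_0$ with $\sup_{x\in K}\bigl|\gamma_j^{-1}\mu_j^{\pm}(x)\bigr|\le\tfrac12$ for $j\ge n_0$; since $\sigma^{-1}=\sigma$ this gives $\sigma+\gamma_j^{-1}\mu_j^{\pm}=\sigma\bigl(1+\sigma\gamma_j^{-1}\mu_j^{\pm}\bigr)$ with $1+\sigma\gamma_j^{-1}\mu_j^{\pm}\in[\tfrac12,\tfrac32]$, so that
\[
	\prod_{j=n_0}^{n}\bigl|\sigma+\gamma_j^{-1}\mu_j^{\pm}(x)\bigr|^{2}
	=\exp\Bigl(2\sum_{j=n_0}^{n}\log\bigl(1+\sigma\gamma_j^{-1}\mu_j^{\pm}(x)\bigr)\Bigr).
\]

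Next I would show that the exponent equals $2\sigma\xi^{\pm}(x)\Gamma_n$ up to a quantity bounded uniformly in $x\in K$ and $n\ge n_0$. Writing $\log(1+t)=t+O(t^{2})$ for $|t|\le\tfrac12$,
\[
	\log\bigl(1+\sigma\gamma_j^{-1}\mu_j^{\pm}\bigr)
	=\sigma\gamma_j^{-1}\xi^{\pm}
	+\sigma\gamma_j^{-1}\bigl(\mu_j^{\pm}-\xi^{\pm}\bigr)
	+O\bigl(\gamma_j^{-2}\bigr),
\]
the last term being $O(\gamma_j^{-2})$ uniformly in $x$ because $(\mu_j^{\pm})$ is uniformly bounded. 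The first term sums to $\sigma\xi^{\pm}(x)(\Gamma_n-\Gamma_{n_0-1})$, and the $O(\gamma_j^{-2})$ terms contribute a bounded quantity. The decisive point is the middle term, for which one must bound $\sum_{j\ge n_0}\gamma_j^{-1}\sup_{x\in K}\bigl|\mu_j^{\pm}(x)-\xi^{\pm}(x)\bigr|$ by a finite constant, and this is where \eqref{eq:50} is used. I would write $\calR-R_j=\sum_{m\ge j}\bigl(R_{m+1}-R_m\bigr)$, so that $\sup_K\|R_j-\calR\|\le\sum_{m\ge j}\sup_K\|R_{m+1}-R_m\|$, and then use the uniform Lipschitz dependence of $\mu_j^{\pm}$ on the entries of $R_j$ (valid as $\discr R_j\ge\delta^{2}$) to get $\sup_K|\mu_j^{\pm}-\xi^{\pm}|\lesssim\sum_{m\ge j}\sup_K\|R_{m+1}-R_m\|$; interchanging the order of summation,
\[
	\sum_{j\ge n_0}\gamma_j^{-1}\sup_K\bigl|\mu_j^{\pm}-\xi^{\pm}\bigr|
	\lesssim\sum_{m\ge n_0}\sup_K\|R_{m+1}-R_m\|\sum_{j=n_0}^{m}\gamma_j^{-1}
	\le\sum_{m\ge n_0}\Gamma_m\sup_K\|R_{m+1}-R_m\|<\infty.
\]

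Putting the pieces together yields $2\sum_{j=n_0}^{n}\log\bigl(1+\sigma\gamma_j^{-1}\mu_j^{\pm}(x)\bigr)=2\sigma\xi^{\pm}(x)\Gamma_n+O(1)$ uniformly in $x\in K$ and $n\ge n_0$: the $O(1)$ absorbs $-2\sigma\xi^{\pm}(x)\Gamma_{n_0-1}$ (bounded on the compact $K$), the two sums estimated above, and the tails $\sum_{j>n}$ of those absolutely convergent series, which tend to $0$. Exponentiating and recalling $2\sigma\xi^{\pm}=\sigma\tr\calR\pm\sqrt{\discr\calR}$ gives \eqref{eq:38} and \eqref{eq:39} with implied constants independent of $x$ and $n$. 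The main obstacle is precisely the uniform estimate of the middle sum: all other errors are elementary, but turning the mere uniform convergence $R_n\to\calR$ into a bound for $\sum_j\gamma_j^{-1}\|R_j-\calR\|_\infty$ that is both finite and independent of the parameter requires representing $R_j-\calR$ as the tail of the series of increments and bringing in the $\Gamma_m$-weight of \eqref{eq:50} via interchange of summation.
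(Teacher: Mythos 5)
Your strategy coincides with the paper's: pass to logarithms, reduce \eqref{eq:38}--\eqref{eq:39} to showing that $\sum_{j=n_0}^{n}\gamma_j^{-1}\cdot 2\sigma\mu_j^{\pm}(x)$ differs from $\big(\sigma\tr\calR(x)\pm\sqrt{\discr\calR(x)}\big)\Gamma_n$ by a quantity bounded uniformly in $n$ and $x$, and feed the hypothesis \eqref{eq:50} into that comparison. Where the paper controls $\sum_{j=n_0}^{n}\gamma_j^{-1}\big(\mu_j^{\pm}-\xi^{\pm}\big)$ by Abel summation (first proving $\Gamma_n\sup_K|\mu_n^{\pm}-\xi^{\pm}|\le c$ from the tail of the increment series, then summing by parts against \eqref{eq:50}), you bound the stronger, absolutely convergent sum $\sum_j\gamma_j^{-1}\sup_K|\mu_j^{\pm}-\xi^{\pm}|$ by representing $\calR-R_j$ as $\sum_{m\ge j}(R_{m+1}-R_m)$ and interchanging the order of summation. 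The two devices are interchangeable here and extract exactly the same information from \eqref{eq:50}; your version is, if anything, a little cleaner. Your observation that $\discr\calR>0$ forces $\calR\ne 0$ and hence $\gamma_n\to\infty$ is a detail the paper uses implicitly without comment.

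The one step that does not follow from the stated hypotheses is the assertion that the $O(\gamma_j^{-2})$ remainders of the logarithm expansion ``contribute a bounded quantity'': this requires $\sum_j\gamma_j^{-2}<\infty$, which is not implied by $\gamma_n\to\infty$ together with $\Gamma_n\to\infty$. For instance $\gamma_n=\log(n+2)$ with $R_n$ a constant matrix of positive discriminant satisfies every hypothesis of the proposition (including \eqref{eq:50}, trivially), yet $\sum_{j\le n}\gamma_j^{-2}\to\infty$, so the second-order terms are not summable and the comparison \eqref{eq:38} genuinely degrades. To be fair, the paper's own proof buries exactly the same term in an unexplained ``$\asymp$'' between $\log\prod_{j}\lvert\sigma+\gamma_j^{-1}\mu_j^{-}\rvert^2$ and $\sum_j\gamma_j^{-1}\big(\sigma\tr R_j-\sqrt{\discr R_j}\big)$, a step that likewise needs either $\sum_j\gamma_j^{-2}<\infty$ or an argument exploiting that the summands eventually all share one sign (problematic near zeros of $\sigma\tr\calR\mp\sqrt{\discr\calR}$). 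So your write-up reproduces the paper's argument at the same level of rigor and makes its weak point explicit rather than introducing a new one; in the situations where the proposition is actually invoked (e.g.\ $\gamma_n=n+1$) the missing summability holds and both arguments close up completely.
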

\begin{proof}
	Since $\discr \calR > 0$ on $K$, there is $n_0$ such that for all $j \geq n_0$ and $x \in K$,
	$\discr R_{j}(x) > 0$. Thus $R_{j}$ has two eigenvalues given by the formulas \eqref{eq:56}.
	By possible enlarging $n_0$, for all $n \geq n_0$, we have
	\[
		\log\Big(
		\prod_{j = n_0}^n \big|\sigma + \gamma_j^{-1} \mu_j^- \big|^2
		\Big)
		\asymp
		\sum_{j = n_0}^n \frac{1}{\gamma_j} \Big(\sigma \tr R_{j} - \sqrt{\discr R_{j}}\Big)
	\]
	uniformly on $K$. Let 
	\[
		A_n^- = \sigma \tr R_{n} - \sqrt{\discr R_{n}},
		\qquad
		A_{\infty}^- = \sigma \tr \calR - \sqrt{\discr \calR}.
	\]
	Since for $m \geq n$,
	\begin{align*}
		\big|A_n^- - A_m^- \big| \cdot \Gamma_n
		&\leq
		c \sum_{k = n}^\infty \big\|R_{k+1} - R_{k} \big\| \cdot \Gamma_n \\
		&\leq
		c \sum_{k = n}^\infty \big\|R_{k+1} - R_{k} \big\| \cdot \Gamma_k,
	\end{align*}
	we obtain
	\begin{equation}
		\label{eq:59}
		\sup_K{\big| A_n^- - A_{\infty}^- \big|} \cdot \Gamma_n
		\leq
		c.
	\end{equation}
	Now, by the summation by parts, we get
	\begin{align*}
		\sum_{j = n_0}^n \frac{1}{\gamma_j} A_j^-
		&=
		(\Gamma_n - \Gamma_{n_0-1}) A_{\infty}^- + \sum_{j = n_0}^n (\Gamma_j - \Gamma_{j-1}) 
		(A_j^- - A_{\infty}^-) \\
		&=
		\Gamma_n A^-_{\infty} - \Gamma_{n_0-1} A_{n_0}^- + \Gamma_n (A_n^- - A_{\infty}^-) +
		\sum_{j = n_0}^{n-1} \Gamma_j (A_j^- - A_{j+1}^-),
	\end{align*}
	thus, by \eqref{eq:50} and \eqref{eq:59},
	\[
		\sup_K{\bigg| \sum_{j = n_0}^n \frac{1}{\gamma_j} A_j^- - A_{\infty}^- \cdot \Gamma_n\bigg|}
		\leq c.
	\]
	Hence,
	\[
		\prod_{j = n_0}^n \big|\sigma + \gamma_j^{-1} \mu_j^- \big|^2 
		\asymp
		\exp \Big(\Gamma_n \big( \sigma \tr \calR - \sqrt{\discr \calR} \big) \Big),
	\]
	uniformly on $K$. The proof of \eqref{eq:39} is similar.
\end{proof}

\section{Essential spectrum for positive discriminant} 
\label{sec:essential}
In this section we prove the main results of the paper.
\begin{theorem}
	\label{thm:6}
	Let $N$ and $r$ be positive integers and $i \in \{1, 2, \ldots, N\}$. Let $A$ be a Jacobi matrix with
	$N$-periodically blended entries. If there is a compact set $K_0 \subset \RR$ with at least $N+3$ points so
	that
	\begin{equation} 
		\label{eq:127}
		\big( X_{n(N+2)+i} : n \in \NN \big) \in \calD_{r,0} \big( K_0, \Mat(2, \RR) \big),
	\end{equation}
	then $A$ is self-adjoint and 
	\[
		\sigmaS(A) \cap \Lambda = \emptyset \quad \text{and} \quad
		\sigmaAC(A) = \sigmaEss(A) = \overline{\Lambda}
	\]
	where
	\[
		\Lambda = \big\{ x \in \RR : \discr \calX_1(x) < 0 \big\}
	\]
	wherein $\calX_1$ is given by the formula \eqref{eq:32}.
\end{theorem}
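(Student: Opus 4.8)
The plan is to import the absolutely continuous part from prior work and to reduce everything to proving $\sigmaEss(A) \subseteq \overline{\Lambda}$, which I would obtain by producing a fast–decaying generalized eigenvector, uniformly on compact subsets of $\RR \setminus \overline{\Lambda}$. First I would check self-adjointness: by Definition \ref{def:3}(b) one has $a_{k(N+2)} = \tilde{a}_{kN}$, and $(\tilde{a}_n)$ is bounded since $\tilde{A}$ is asymptotically $N$-periodic, so $\sum_n a_n^{-1} \geq \sum_k a_{k(N+2)}^{-1} = \infty$; hence Carleman's condition \eqref{eq:19} holds and $A$ is self-adjoint (this is also contained in \cite[Theorem B]{SwiderskiTrojan2019}). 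By \cite[Theorem B]{SwiderskiTrojan2019} we moreover have $\overline{\Lambda} \subseteq \sigmaAC(A)$ and $\sigmaS(A) \cap \Lambda = \emptyset$. Since $\sigmaAC(A) \subseteq \sigmaEss(A)$ always, once $\sigmaEss(A) \subseteq \overline{\Lambda}$ is established the identities $\sigmaEss(A) = \sigmaAC(A) = \overline{\Lambda}$ follow and the proof is complete; thus the whole task is to prove $\sigmaEss(A) \subseteq \overline{\Lambda}$.

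For the preliminaries: the entries of $X_m(x)$ are polynomials in $x$ of degree at most $N+2$, and $K_0$ has at least $N+3$ points, so Lemma \ref{lem:3} upgrades \eqref{eq:127} to $(X_{n(N+2)+i} : n \in \NN) \in \calD_{r,0}(K, \Mat(2, \RR))$ for every compact $K \subset \RR$. Furthermore $\det X_{n(N+2)+i}(x) = \tilde{a}_{nN+i-1}/\tilde{a}_{(n+1)N+i-1}$ is independent of $x$ and tends to $1$, so this sequence lies in $\calD_{r,0}(K, \GL(2, \RR))$ and converges uniformly on compacts to $\calX_i$ with $\det \calX_i \equiv 1$; by cyclicity of the trace (using \eqref{eq:32} for $i \leq N-1$, and a similar computation based on \eqref{eq:21a}--\eqref{eq:21c} for $i = N$) all $\calX_i$ have the same trace, hence $\Lambda = \{x : \discr \calX_i(x) < 0\}$ regardless of $i$. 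Finally, writing $v_k(x)$ for the vector with components $u_{k(N+2)+i-1}(x)$ and $u_{k(N+2)+i}(x)$, the generalized-eigenvector recursion of $A$ becomes $v_{k+1}(x) = X_{k(N+2)+i}(x) v_k(x)$.

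Now fix $x_0 \in \RR \setminus \overline{\Lambda}$, so $\discr \calX_i(x_0) > 0$ while $\det \calX_i(x_0) = 1 > 0$. By Lemma \ref{lem:4} — after swapping the two coordinate axes if necessary, which is harmless — there is a closed interval $I \ni x_0$ on which \eqref{eq:35} holds for $\calX_i$. I would apply Theorem \ref{thm:2} to $(X_{n(N+2)+i}|_I : n \in \NN)$ with $E_n \equiv 0$ and with $\eta = \lambda^-$, the eigenvalue of smaller modulus; since $\lambda^+ \lambda^- = \det \calX_i = 1$ and the eigenvalues are real, distinct and of the same sign, necessarily $|\lambda^-(x)| < 1 < |\lambda^+(x)|$ on $I$, hence $\sup_I |\lambda^-| < 1$. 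Theorem \ref{thm:2} then yields $n_0$ and continuous $\Phi_n : I \to \CC^2$, $\mu_n : I \to \CC$ with $\Phi_{n+1} = X_{n(N+2)+i} \Phi_n$, $\mu_n \to \lambda^-$ uniformly, and $\Phi_n / \prod_{j=1}^{n-1} \mu_j \to v$ uniformly on $I$ with $v(x) \neq 0$; consequently $\sup_I \|\Phi_n\| \lesssim \rho^n$ for some $\rho \in (0,1)$. Invertibility of $X_{n(N+2)+i}$ forces $\Phi_{n_0}(x) \neq 0$ for all $x \in I$, and from $(\Phi_n)$ one reconstructs a genuine generalized eigenvector $(u_n(x) : n \in \NN_0)$ with $(u_0(x), u_1(x)) \neq (0,0)$: each $u_m$ is obtained by applying at most $N+1$ transfer matrices $B_j^{\pm 1}$ to the nearest $\Phi_n$, and these are uniformly bounded on compacts — at $j \not\equiv 0 \pmod{N+2}$ one uses $B_j$ moving forward (bounded; cf. \eqref{eq:21b}, \eqref{eq:21c} for $j \equiv N, N+1$), and across $j \equiv 0 \pmod{N+2}$ one uses $B_j^{-1}$ moving backward (bounded by \eqref{eq:21a}). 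Hence $\sup_{x \in I} |u_n(x)| \lesssim \rho^{n/(N+2)}$ for large $n$, so $\sum_{n=0}^\infty \sup_{x \in I} |u_n(x)|^2 < \infty$, and the criterion of \cite{Silva2007} recalled in \eqref{eq:132} gives $\sigmaEss(A) \cap I = \emptyset$. Covering $\RR \setminus \overline{\Lambda}$ by such intervals yields $\sigmaEss(A) \subseteq \overline{\Lambda}$, which finishes the proof.

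The main obstacle is the positive-discriminant step. In contrast to the elliptic case, Theorem \ref{thm:2} is only available locally because of condition \eqref{eq:35} (Lemma \ref{lem:4}), so one must patch the estimate over a cover of $\RR \setminus \overline{\Lambda}$; and, more delicately, one has to transfer the block-level decay of $(\Phi_n)$ to the full sequence $(u_n)$ with a uniform geometric rate, which forces a careful treatment of the degenerate positions $j \equiv 0 \pmod{N+2}$ — where $B_j$ is unbounded and one must propagate backward with $B_j^{-1}$ instead (cf. Proposition \ref{prop:1}) — together with the verification that the reconstructed $(u_n)$ is a nonzero, bona fide generalized eigenvector.
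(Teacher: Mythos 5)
Your proof is correct and follows essentially the same route as the paper: Lemma \ref{lem:3} to upgrade \eqref{eq:127} to arbitrary compact sets, Lemma \ref{lem:4} to localize so that \eqref{eq:35} can be assumed, Theorem \ref{thm:2} with $\eta=\lambda^-$ to produce a uniformly geometrically decaying block solution, Proposition \ref{prop:1} to handle the degenerate positions $j\equiv 0,N,N+1 \pmod{N+2}$ when passing from the block sequence to the full generalized eigenvector, and the criterion \eqref{eq:132} of \cite{Silva2007} to exclude the essential spectrum off $\overline{\Lambda}$. The one genuine divergence is self-adjointness: the paper obtains it from the same machinery by also running Theorem \ref{thm:2} with $\eta=\lambda^+$, producing a generalized eigenvector with $\|\phi^+_{n(N+2)+i}\|^2\gtrsim(1+\delta)^n$ and hence not square-summable, whereas you note that $a_{k(N+2)}=\tilde a_{kN}$ stays bounded so Carleman's condition \eqref{eq:19} already holds for blended entries; both arguments are valid, and yours is shorter at that step.
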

\begin{proof}
	Fix $x_0 \in \RR \setminus \overline{\Lambda}$. Let $I$ be an open interval containing $x_0$ such that
	$\overline{I} \subset \RR \setminus \overline{\Lambda}$. Since $\discr \calX_1 = \discr \calX_i$,
	we have $\discr \calX_i > 0$ on $\overline{I}$. Thus the matrix $\calX_i$ has two different eigenvalues
	$\lambda^+$ and $\lambda^-$. Since $\det \calX_i \equiv 1$, we can select them in such a way that
	\[
		\abs{\lambda^-} < 1 < \abs{\lambda^+}.
	\]
	Let $I_0$ be an open interval determined by Lemma \ref{lem:4} for $x_0$ and the mapping $\calX_i: \overline{I}
	\rightarrow \Mat(2, \RR)$. Without loss of generality we can assume that, for all $x \in I_0$,
	\[
		\big| [\calX_i(x)]_{1, 1} - \lambda^-(x)\big| > 0,
		\qquad\text{and}\qquad
		\big| [\calX_i(x)]_{2, 2} - \lambda^+(x)\big| > 0.
	\]
	Let $K = \overline{I_0}$. In view of Lemma \ref{lem:3},
	\begin{equation} 
		\label{eq:130}
		\big( X_{j(N+2)+i} : j \in \NN \big) \in \calD_{r,0} \big( K, \GL(2, \RR) \big).
	\end{equation}
	Now, by Theorem \ref{thm:2}, there are sequences $(\Phi^\pm_n : n \geq n_0)$ and $(\mu_n^\pm : n \in \NN)$,
	such that
	\begin{equation} \label{eq:138}
		\lim_{n \to \infty} \sup_{x \in K}{
		\bigg\| \frac{\Phi^\pm_n(x)}{\prod_{j=1}^n \mu_j^\pm(x)} - v^\pm(x) \bigg\|} = 0.
	\end{equation}
	where $v^\pm$ is a continuous eigenvector of $\calX_i$ corresponding to $\lambda^\pm$. We set
	\[
		\phi_1^\pm = B_1^{-1} \cdots B_{n_0(N+2)+i-1}^{-1} \Phi^\pm_{n_0},
	\]
	and for $n \geq 1$,
	\begin{equation}
		\label{eq:24}
		\phi^\pm_{n+1} = B_n \phi^\pm_n.
	\end{equation}
	Then for $k(N+2)+i' > n_0(N+2) + i$ with $i' \in \{0, 1, \ldots, N+1\}$, we have
	\begin{equation} \label{eq:139}
		\phi^\pm_{k(N+2)+i'}
		=
		\begin{cases}
			B^{-1}_{k(N+2)+i'} B^{-1}_{k(N+2)+i'+1} \cdots B^{-1}_{k(N+2)+i-1} \Phi^\pm_k
			&\text{if } i' \in \{0, 1, \ldots, i-1\}, \\
			\Phi^\pm_k 
			&\text{if } i' = i, \\
			B_{k(N+2)+i'-1} B_{k(N+2)+i'-2} \cdots B_{k(N+2)+i} \Phi_k^\pm
			&\text{if } i' \in \{i+1, \ldots, N+1\}.
		\end{cases}
	\end{equation}
	Since for $i' \in \{1, \ldots, i-1\}$,
	\[
		\lim_{k \to \infty}
		B^{-1}_{k(N+2)+i'} B^{-1}_{k(N+2)+i'+1} \cdots B^{-1}_{k(N+2)+i-1}
		=
		\frakB^{-1}_{i'} \frakB^{-1}_{i'-1} \cdots \frakB^{-1}_{i-1},
	\]
	we obtain
	\begin{equation}
		\label{eq:25}
		\lim_{k \to \infty}
		\sup_K{
		\bigg\|
		\frac{\phi^{-}_{k(N+2)+i'}}{\prod_{j=1}^{k-1} \mu_j^-} 
		- 
		\frakB^{-1}_{i'} \frakB^{-1}_{i'-1} \cdots \frakB^{-1}_{i-1} v^-
		\bigg\|
		}
		=0.
	\end{equation}
	Analogously, for $i' \in \{i+1, \ldots, N\}$, we get
	\begin{equation}
		\label{eq:26}
		\lim_{k \to \infty}
		\sup_K{
		\bigg\|
		\frac{\phi^{-}_{k(N+2)+i'}}{\prod_{j=1}^{k-1} \mu_j^-}
		-
		\frakB_{i'-1} \frakB_{i'-2} \cdots \frakB_i v^-
		\bigg\|
		}
		=0.
	\end{equation}
	Lastly, by Proposition \ref{prop:1},
	\begin{equation}
		\label{eq:22}
		\lim_{k \to \infty}
		\sup_K
		\Bigg\|
		\frac{\phi^{-}_{k(N+2)}}{\prod_{j = 0}^{k-1} \mu_j^-}
		-
		\begin{pmatrix}
			0 & 0 \\
			1 & 0
		\end{pmatrix}
		\frakB_{1}^{-1} \frakB_{2}^{-1} \cdots \frakB_{i-1}^{-1} v^-
		\Bigg\|
		=0
	\end{equation}
	and	
	\begin{equation}
		\label{eq:23}
		\lim_{k \to \infty}
		\sup_K
		\Bigg\|
		\frac{\phi^\pm_{k(N+2)+N+1}}{\prod_{j=1}^{k-1} \mu_j^-}
		-
		\begin{pmatrix}
			0 & 1 \\
			0 & 0
		\end{pmatrix}
		\frakB_{N-1} \frakB_{N-2} \cdots \frakB_i v^-
		\Bigg\|
		=0.
	\end{equation}
	Since $(\phi^\pm_n : n \in \NN)$ satisfies \eqref{eq:24}, the sequence $(u^\pm_n(x) : n \in \NN_0)$ 
	defined as
	\[
		u^\pm_n(x) = 
		\begin{cases}
			\langle \phi^\pm_1(x), e_1 \rangle & \text{if } n = 0, \\
			\langle \phi^\pm_n(x), e_2 \rangle & \text{if } n \geq 1,
		\end{cases}
	\]
	is a generalized eigenvector associated to $x \in K$. Observe that $(u_0, u_1) \neq 0$ on $K$. 
	Indeed, otherwise there is $x \in K$, so that $\phi^\pm_1(x) = 0$, hence $\phi^\pm_n(x) = 0$ for 
	all $n \in \NN$. Therefore, $v^\pm(x) = 0$, which is impossible. Now, in view of \eqref{eq:138} and \eqref{eq:139},
	there are constants $c > 0$ and $\delta > 0$ such that for all $n \geq n_0$ and $x \in K$,
	\[
		\big| u^+_{n(N+2)+i-1}(x) \big|^2 + \big| u^+_{n(N+2)+i}(x) \big|^2 = 
		\big\| \phi^+_{n(N+2)+i}(x) \big\|^2 \geq c
		\prod_{j=n_0}^{n-1} |\mu^+_j(x)|^2 \geq c (1 + \delta)^n.
	\]
	Moreover, by \eqref{eq:25}--\eqref{eq:23}, for all $n \geq n_0$, $i' \in \{0, 1, \ldots, N+1 \}$, and $x \in K$,
	\[
		\big\| \phi^-_{n(N+2)+i'}(x) \big\|^2 \leq c \prod_{j=n_0}^{n-1} |\mu^-_j(x)|^2 \leq c (1+\delta)^{-n}.
	\]
	Consequently, for any $x \in K$,
	\[
		\sum_{n = 0}^\infty \abs{u^+_n(x)}^2 = \infty
	\]
	which shows that $A$ is self-adjoint (see \cite[Theorem 6.16]{Schmudgen2017}). Moreover,
	\[
		\sum_{n = 0}^\infty \sup_{x \in K}{\abs{u^-_n(x)}}^2 < \infty,
	\]
	thus by the proof of \cite[Theorem 5.3]{Silva2007},
	\[
		\sigmaEss(A) \cap K = \emptyset.
	\]
	Therefore, for all $x_0 \in \RR \setminus \overline{\Lambda}$ there is an open interval $I_0$ containing $x_0$
	such that $\sigmaEss(A) \cap I_0 = \emptyset$. Consequently, $\sigmaEss(A) \subseteq \overline{\Lambda}$.
	In view of \eqref{eq:130}, \cite[Theorem B]{SwiderskiTrojan2019} implies that $A$ is purely absolutely continuous
	on $\Lambda$, and $\overline{\Lambda} \subset \sigmaAC(A)$. This completes the proof.
\end{proof}

\begin{remark}
	The proof of \cite[Corollary 6.7]{Swiderski2020} entails that \eqref{eq:127} is satisfied for any compact
	set $K \subset \RR$, and all $i \in \{ 1, 2, \ldots, N \}$, provided that
	\[
		\bigg( \frac{1}{a_n} : n \in \NN \bigg), 
		\big( b_n : n \in \NN \big) \in \calD^{N+2}_{r,0} \quad \text{and} \quad
		\bigg( \frac{a_{n(N+2)+N}}{a_{n(N+2)+N+1}} : n \in \NN \bigg) \in \calD_{r,0}.
	\]
\end{remark}

Essentially the same reasoning as in the proof of Theorem \ref{thm:6} leads to the following theorem.
\begin{theorem}
	\label{thm:7}
	Let $N$ and $r$ be positive integers and $i \in \{0, 1, \ldots, N-1\}$. Let $A$ be a Jacobi matrix with 
	$N$-periodically modulated entries. If $|\tr \frakX_0(0)| > 2$ and there is a compact set $K_0 \subset \RR$ with
	at least $N+1$ points so that 
	\begin{equation} 
		\label{eq:128}
		\big( X_{nN+i} : n \in \NN \big) \in \calD_{r,0} \big( K_0, \Mat(2, \RR) \big),
	\end{equation}
	then $A$ is self-adjoint and $\sigmaEss(A) = \emptyset$.
\end{theorem}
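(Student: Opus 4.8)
The plan is to transcribe the proof of Theorem~\ref{thm:6}, the simplification being that the limiting one‑period transfer matrix is here the \emph{constant} matrix $\calX_i = \frakX_i(0)$, and that the positions inside a period are controlled through the (locally uniform in $x$) convergence $B_{kN+m}(x) \to \frakB_m(0)$ rather than through Proposition~\ref{prop:1}.

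I would first collect the linear algebra. From $N$‑periodicity one has $\frakX_{n+1}(0) = \frakB_n(0)\,\frakX_n(0)\,\frakB_n(0)^{-1}$, so all the matrices $\calX_i$ ($i = 0,\dots,N-1$) are conjugate; hence $\discr \calX_i = (\tr \frakX_0(0))^2 - 4 > 0$ and $\det \calX_i = \prod_{j} \alpha_{j-1}/\alpha_j = 1$. Thus $\calX_i$ has two real eigenvalues $\lambda^+,\lambda^-$ with $\lambda^+\lambda^- = 1$, and $|\tr \frakX_0(0)| > 2$ forces $|\lambda^-| < 1 < |\lambda^+|$. Applying Lemma~\ref{lem:4} to the constant mapping $\calX_i$ and, if necessary, conjugating all transfer matrices by $J = \left(\begin{smallmatrix}0&1\\1&0\end{smallmatrix}\right)$, we may assume $\big|[\calX_i]_{1,1} - \lambda^+\big| > 0$ and $\big|[\calX_i]_{2,2} - \lambda^-\big| > 0$, which is exactly hypothesis \eqref{eq:35} of Theorem~\ref{thm:2} (and $\det \calX_i = 1 > 0$, $\discr\calX_i \neq 0$ are the remaining hypotheses).

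Fix now $x_0 \in \RR$ and a bounded open interval $I \ni x_0$, and set $K = \overline{I}$. The entries of $X_{nN+i}$ are polynomials of degree at most $N$ and $K_0$ has at least $N+1$ points, so Lemma~\ref{lem:3} promotes \eqref{eq:128} to $(X_{nN+i} : n \in \NN) \in \calD_{r,0}\big(K,\Mat(2,\RR)\big)$; since $\det X_{nN+i} = a_{nN+i-1}/a_{(n+1)N+i-1}$ is positive and converges to $1$ by condition (b) of Definition~\ref{def:2}, this sequence lies in $\calD_{r,0}\big(K,\GL(2,\RR)\big)$. Feeding $(X_{nN+i})$ with $E_n \equiv 0$ into Theorem~\ref{thm:2}, once with $\eta = \lambda^+$ and once with $\eta = \lambda^-$, yields sequences $(\Phi_n^\pm)$ and $(\mu_n^\pm)$ with $\mu_n^\pm \to \lambda^\pm$ uniformly on $K$ and $\Phi_n^\pm/\prod_{j=n_0}^{n-1}\mu_j^\pm \to v^\pm$ uniformly on $K$, where $v^\pm(x)$ is a continuous, nowhere‑vanishing eigenvector of $\calX_i$ for $\lambda^\pm$. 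Exactly as in Theorem~\ref{thm:6} I would put $\phi_1^\pm = B_1^{-1}\cdots B_{n_0N+i-1}^{-1}\Phi_{n_0}^\pm$ and $\phi_{n+1}^\pm = B_n\phi_n^\pm$, so that $\phi_{kN+i}^\pm = \Phi_k^\pm$ for $k \geq n_0$, while for each $i' \in \{0,\dots,N-1\}$ the vector $\phi_{kN+i'}^\pm$ differs from $\Phi_k^\pm$ by a fixed word of at most $N$ factors $B_{kN+m}(x)^{\pm1}$, all of which converge uniformly on $K$ to the invertible matrices $\frakB_m(0)^{\pm1}$. Consequently $\phi_{kN+i'}^\pm/\prod_{j=n_0}^{k-1}\mu_j^\pm$ converges uniformly on $K$ to a continuous, nowhere‑vanishing vector, so $\|\phi_{kN+i'}^\pm(x)\|^2 \asymp \prod_{j=n_0}^{k-1}|\mu_j^\pm(x)|^2$ with constants uniform over $x \in K$ (two‑sided for the $+$ branch).

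From here the spectral conclusions follow verbatim from the blended case. The sequence $u_n^\pm$ given by $u_0^\pm = \langle \phi_1^\pm, e_1\rangle$ and $u_n^\pm = \langle \phi_n^\pm, e_2\rangle$ for $n \geq 1$ is a generalized eigenvector associated with $x$, and it is genuine since $\phi_1^\pm$ vanishes nowhere on $K$ (otherwise $\phi_n^\pm \equiv 0$, contradicting $v^\pm \neq 0$). Because $|\lambda^+| > 1$ we have $\prod_{j=n_0}^{n-1}|\mu_j^+(x_0)|^2 \to \infty$, so the generalized eigenvector $u^+$ at the real point $x_0$ is not square‑summable; hence $A$ is self‑adjoint by \cite[Theorem~6.16]{Schmudgen2017}. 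Because $|\lambda^-| < 1$ there are $\rho < 1$ and $c$ with $\prod_{j=n_0}^{n-1}|\mu_j^-(x)| \leq c\,\rho^{\,n}$ uniformly on $K$, so $\sum_n \sup_{x \in K}|u_n^-(x)|^2 < \infty$, and the proof of \cite[Theorem~5.3]{Silva2007} gives $\sigmaEss(A)\cap K = \emptyset$. Since $x_0 \in \RR$ was arbitrary and $K$ is a neighbourhood of it, $\sigmaEss(A) = \emptyset$. The only step requiring genuine care is the uniform two‑sided control of $\|\phi_{kN+i'}^\pm\|$ at the non‑distinguished positions $i' \neq i$; everything else is elementary linear algebra or a direct copy of the argument for $N$‑periodically blended matrices.
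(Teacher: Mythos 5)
Your proposal is correct and matches the paper's intent exactly: the paper gives no separate proof of Theorem~\ref{thm:7}, stating only that ``essentially the same reasoning as in the proof of Theorem~\ref{thm:6}'' applies, and your writeup is precisely that adaptation (constant limit matrix $\frakX_i(0)$ with $\det=1$ and $\discr=(\tr\frakX_0(0))^2-4>0$, Lemma~\ref{lem:3} to upgrade the regularity from $K_0$ to arbitrary compacts, Theorem~\ref{thm:2} with $E_n\equiv 0$, and the invertibility of all $\frakB_m(0)$ replacing the special treatment via Proposition~\ref{prop:1} at the degenerate positions of the blended case). The supporting details you supply --- the verification of hypothesis \eqref{eq:35} via Lemma~\ref{lem:4} and conjugation by $J$, and the two-sided control of $\|\phi^{\pm}_{kN+i'}\|$ at the non-distinguished positions --- are all sound.
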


\begin{remark}
	The proof of \cite[Corollary 8]{SwiderskiTrojan2019} implies that \eqref{eq:128} is satisfied for any compact 
	set $K \subset \RR$, and all $i \in \{0, 1, \ldots, N-1 \}$, provided that
	\[
		\bigg( \frac{a_{n-1}}{a_n} : n \in \NN \bigg), 
		\bigg( \frac{b_n}{a_n} : n \in \NN \bigg),
		\bigg( \frac{1}{a_n} : n \in \NN \bigg) \in \calD^N_{r,0}.
	\]
\end{remark}

We next consider the case when $\frakX_0$ has equal eigenvalues.
\begin{theorem} 
	\label{thm:10}
	Let $N$ and $r$ be positive integers and $i \in \{0, 1, \ldots, N-1\}$. Let $A$ be a Jacobi matrix with 
	$N$-periodically modulated entries so that $\frakX_0(0) = \sigma \Id$ for a certain $\sigma \in \{-1, 1\}$. Suppose 
	that there are two $N$-periodic sequences $(s_n : n \in \NN_0)$ and $(z_n : n \in \NN_0)$, such that
	\[
		\lim_{n \to \infty} \bigg|\frac{\alpha_{n-1}}{\alpha_n} a_n - a_{n-1} - s_n\bigg| = 0,
		\qquad
		\lim_{n \to \infty} \bigg|\frac{\beta_n}{\alpha_n} a_n - b_n - z_n\bigg| = 0.
	\]
	Let
	\[ 
		R_n = a_{n+N-1} \big(X_n - \sigma \Id\big).
	\]
	Then $(R_{nN} : n \in \NN)$ converges to $\calR_0$ locally uniformly on $\RR$.
	If there is a compact set $K_0 \subset \RR$ with at least $N+1$ points such that
	\begin{equation} \label{eq:129}
		\big( R_{nN+i} : n \in \NN \big) \in 
		\calD_{1,0} \big( K_0, \Mat(2, \RR) \big),
	\end{equation}
	then $A$ is self-adjoint and 
	\[
		\sigmaS(A) \cap \Lambda = \emptyset \qquad \text{and} \qquad
		\sigmaAC(A) = \sigmaEss(A) = \overline{\Lambda}
	\]
	where
	\[
		\Lambda = \big\{ x \in \RR : \discr \calR_0(x) < 0 \big\}.
	\]
\end{theorem}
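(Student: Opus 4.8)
The plan is to transcribe the proof of Theorem~\ref{thm:6}, replacing Theorem~\ref{thm:2} by Theorem~\ref{thm:3}; this substitution is forced because $\frakX_0(0)=\sigma\Id$ makes $(X_{nN+i})$ a perturbation of $\sigma\Id$ rather than of a matrix with distinct eigenvalues. First I would dispose of the preliminary assertion and of self-adjointness. The convergence $R_{nN}\to\calR_0$ is precisely Proposition~\ref{prop:10} for $i=0$, since $R_{nN}=a_{(n+1)N-1}(X_{nN}-\sigma\Id)$. The same proposition, valid for every residue under the standing hypotheses on $(s_n),(z_n)$, shows that the increments $a_{(k+1)N+i-1}-a_{kN+i-1}$ converge as $k\to\infty$; hence $a_n=O(n)$, the Carleman condition \eqref{eq:19} holds, and $A$ is self-adjoint.

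Next I would fix $x_0\in\RR\setminus\overline{\Lambda}$ and localize. As in the blended case, $\Lambda$ is a finite union of open intervals, so $\discr\calR_0(x_0)>0$; since $\discr\calR_i=c_i\,\discr\calR_0$ for a constant $c_i>0$ (cf.\ \eqref{eq:102a} and the cyclic structure of the $\frakB_j(0)$), so that $\{\discr\calR_i<0\}=\Lambda$ for every $i$, one finds a closed interval $K=\overline{I_0}\ni x_0$ with $K\subset\RR\setminus\overline{\Lambda}$ on which $\discr\calR_i>0$. Each entry of $X_{nN+i}$, hence of $R_{nN+i}=a_{(n+1)N+i-1}(X_{nN+i}-\sigma\Id)$, is a polynomial in $x$ of degree at most $N$, so Lemma~\ref{lem:3} upgrades \eqref{eq:129} to $(R_{nN+i})\in\calD_{1,0}(K,\Mat(2,\RR))$. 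I would then apply Theorem~\ref{thm:3} on $K$ to $(X_{nN+i})$ (which converges uniformly on $K$ to $\sigma\Id$) with $\gamma_n=a_{(n+1)N+i-1}$, $E_n\equiv0$, and $\eta$ the smaller eigenvalue of $\calR_i$; this is legitimate since $\sum_n\gamma_n^{-1}=\infty$, again because $\gamma_n=O(n)$, and since $\gamma_n(X_{nN+i}-\sigma\Id)=R_{nN+i}\to\calR_i$ uniformly on $K$. The theorem produces $(\Phi^-_n:n\ge n_0)$ with $\Phi^-_{n+1}=X_{nN+i}\Phi^-_n$ and $\Phi^-_n/\prod_{j=n_0}^{n-1}\lambda^-_j\to v^-$ uniformly on $K$, where $v^-$ is a nowhere-vanishing continuous eigenfield and $\lambda^-_j$ is the smaller-modulus eigenvalue of $X_{jN+i}$; moreover \eqref{eq:126} together with $\det X_{jN+i}=a_{jN+i-1}/a_{(j+1)N+i-1}$ gives $|\lambda^-_j(x)|^2\le(a_{jN+i-1}/a_{(j+1)N+i-1})\exp(-\delta\gamma_j^{-1}/4)$ for large $j$, uniformly on $K$.

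From here I would follow Theorem~\ref{thm:6} almost verbatim. Setting $\phi^-_1=(B_1\cdots B_{n_0N+i-1})^{-1}\Phi^-_{n_0}$ and $\phi^-_{n+1}=B_n\phi^-_n$, the vector $\phi^-_{kN+i'}$ is $\Phi^-_k$ pre-multiplied by a product of $B$'s (or of inverses of $B$'s) converging to the corresponding invertible product of matrices $\frakB_{i'}(0)$, so $\|\phi^-_{kN+i'}(x)\|\asymp\prod_{j<k}|\lambda^-_j(x)|$ uniformly in $i'\in\{0,\dots,N-1\}$ and $x\in K$ — unlike in Theorem~\ref{thm:6} no degenerate transfer matrices occur, so this step is routine. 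The resulting generalized eigenvector $(u^-_n(x):n\in\NN_0)$ has non-zero initial data on $K$ (else $\Phi^-_n\equiv0$ and $v^-\equiv0$), and telescoping the product of the ratios $a_{jN+i-1}/a_{(j+1)N+i-1}$ yields, with $\Gamma_k=\sum_{j\le k}\gamma_j^{-1}\to\infty$, the bound $\prod_{j<k}|\lambda^-_j(x)|^2\le C\,a_{kN+i-1}^{-1}\exp(-\delta\Gamma_k/4)$. Since $a_{kN+i-1}^{-1}\asymp\Gamma_k-\Gamma_{k-1}$, comparison with $\int e^{-\delta\Gamma/4}\,d\Gamma$ shows $\sum_k\prod_{j<k}|\lambda^-_j|^2<\infty$, hence $\sum_n\sup_{x\in K}|u^-_n(x)|^2<\infty$; I would not invoke Proposition~\ref{prop:7} here, since its hypothesis \eqref{eq:50} is strictly stronger than membership in $\calD_{1,0}$ and only finiteness of this series is needed. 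The criterion of \cite[Theorem~5.3]{Silva2007}, applied exactly as in Theorem~\ref{thm:6}, then gives $\sigmaEss(A)\cap I_0=\emptyset$, and letting $x_0$ run over $\RR\setminus\overline{\Lambda}$ yields $\sigmaEss(A)\subseteq\overline{\Lambda}$. For the reverse inclusion and the singular part I would invoke \cite{PeriodicIII} together with the negative-discriminant case of Theorem~\ref{thm:3} and subordinacy theory (in the spirit of \cite{SwiderskiTrojan2019}): $A$ is purely absolutely continuous on $\Lambda$ and $\overline{\Lambda}\subseteq\sigmaAC(A)\subseteq\sigmaEss(A)$, which with the above gives $\sigmaS(A)\cap\Lambda=\emptyset$ and $\sigmaAC(A)=\sigmaEss(A)=\overline{\Lambda}$.

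I expect the square-summability estimate for $u^-$ to be the delicate point: because $\gamma_n\to\infty$, the ratio of the two eigenvalues of $X_{nN+i}$ tends to $1$, so the subordinate solution decays only sub-geometrically, and one must play the telescoping of $\det X_{nN+i}$ against the divergence of $\Gamma_n$ rather than rely on a fixed contraction ratio. A secondary obstacle is the structural input $\RR\setminus\overline{\Lambda}\subseteq\{\discr\calR_0>0\}$ — that is, the band structure of the effective matrix $\calR_0$ — which is needed to start the localization; everything else is a faithful adaptation of the blended-case argument with Theorem~\ref{thm:3} in place of Theorem~\ref{thm:2}.
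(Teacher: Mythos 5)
Your proposal is correct and its overall architecture coincides with the paper's: Proposition~\ref{prop:10} gives $a_n=O(n)$, hence the Carleman condition and self-adjointness; Lemma~\ref{lem:3} upgrades \eqref{eq:129} to arbitrary compacta; Theorem~\ref{thm:3} applied to $(X_{nN+i})$ with $\gamma_n=a_{(n+1)N+i-1}$ produces the subordinate solution; the other residues $i'$ are handled by pre-multiplying by convergent products of transfer matrices; and \cite[Theorem 5.3]{Silva2007} closes the essential-spectrum part, with the absolutely continuous part outsourced to the literature (the paper cites \cite{Moszynski2009} where you cite \cite{PeriodicIII} and subordinacy; both are citation-level choices). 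The one place where you genuinely diverge is the step you correctly identified as delicate, the square-summability of $u^-$. The paper proves a pointwise bound $|\lambda_n^-(x)|\leq 1-\tfrac{1+\delta'}{2n}$ via its Claim~\ref{clm:1}, which rests on the Stolz--Ces\'aro theorem and the trace formula of Proposition~\ref{prop:10} (with a case distinction according to whether $\tr\calR_i$ vanishes), yielding the explicit polynomial decay $n^{-(1+\delta')/2}$ whose square is summable. You instead combine the eigenvalue-ratio bound \eqref{eq:126} with the telescoping identity $\lambda_j^+\lambda_j^-=\det X_{jN+i}=a_{jN+i-1}/a_{(j+1)N+i-1}$ to get
\[
	\prod_{j=n_0}^{k-1}\big|\lambda_j^-\big|^2\leq C\, a_{kN+i-1}^{-1}\exp\Big(-\tfrac{\delta}{4}\Gamma_{k}\Big),
\]
and then sum by comparison with $\int e^{-\delta t/4}\,{\rm d}t$, using $a_{kN+i-1}^{-1}\asymp\Gamma_{k}-\Gamma_{k-1}$ and $\Gamma_k-\Gamma_{k-1}\to 0$. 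This is valid and arguably cleaner: it avoids the Stolz--Ces\'aro step and the case distinction entirely, and, as you note, it achieves what Proposition~\ref{prop:7} would give without its stronger hypothesis \eqref{eq:50}. What it loses is the explicit decay rate, which the paper's argument provides but does not actually need for the conclusion. The only cosmetic discrepancy is that you write $\discr\calR_i=c_i\discr\calR_0$ where the paper asserts plain equality; either way the sets $\{\discr\calR_i<0\}$ agree for all $i$, which is all that is used.
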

\begin{proof}
	In view of Proposition~\ref{prop:10}, there is $c > 0$ such that for all $k \geq 0$,
	\begin{align*}
		a_{kN+i} 
		&= 
		\sum_{j=0}^{k-1} \big( a_{(j+1)N+i} - a_{jN+i} \big) + a_i \\
		&\leq
		c(k+1).
	\end{align*}
	Therefore,
	\[
		\sum_{n = 0}^{k_0N+i} \frac{1}{a_n} 
		\geq 
		\sum_{k = 0}^{k_0} \frac{1}{a_{kN+i}}
		\geq 
		\frac{1}{c} \sum_{k=1}^{k_0} \frac{1}{k}.
	\]
	Thus, the Carleman's condition is satisfied, and so $A$ is self-adjoint.

	Thanks to Lemma~\ref{lem:3}, for any compact set $K \subset \RR$,
	\[
		\big( X_{jN+i} : j \in \NN_0 \big),
		\big( R_{nN+i} : n \in \NN \big) \in 
		\calD_{1,0} \big( K, \Mat(2, \RR) \big).
	\]
	Since $\discr \calR_0 = \discr \calR_i$, by \cite[Criterion 5.8]{Moszynski2009} together with
	\cite[Proposition 5.7]{Moszynski2009} and \cite[Theorem 5.6]{Moszynski2009}, we conclude that $A$ is purely
	absolutely continuous on $\Lambda$ and $\overline{\Lambda} \subset \sigmaAC(A)$. Hence, it remains to show that
	$\sigmaEss(A) \subset\overline{\Lambda}$. To do so, we fix a compact set $K \subset \RR \setminus \overline{\Lambda}$
	with non-empty interior. Since $\discr \calR_i > 0$ on $K$, for each $x \in K$ the matrix $\calR_i(x)$ has two
	distinct eigenvalues
	\[
		\xi^+(x) = \frac{\tr \calR_{i}(x) + \sigma \sqrt{\discr \calR_i(x)}}{2},
		\qquad\text{and}\qquad
		\xi^-(x) = \frac{\tr \calR_i(x) - \sigma \sqrt{\discr \calR_i(x)}}{2}.
	\]
	Moreover, $(\discr R_{jN+i} : j \in \NN)$ converges uniformly on $K$, thus there are $M \geq 1$ and $\delta > 0$
	such that for all $j \geq M$ and $x \in K$,
	\[
		\discr R_{jN+i}(x) \geq \delta.
	\]
	Therefore, $R_{jN+i}(x)$ has two distinct eigenvalues
	\[
		\xi^+_j(x) = \frac{\tr R_{jN+i}(x) + \sigma \sqrt{\discr R_{jN+i}(x)}}{2},
		\qquad\text{and}\qquad
		\xi^-_j(x) = \frac{\tr R_{jN+i}(x) - \sigma \sqrt{\discr R_{jN+i}(x)}}{2}.
	\]
	Since
	\[
		X_n = \sigma \Id + \frac{1}{a_{n+N-1}} R_n,
	\]
	the eigenvalues of $X_{jN+i}(x)$ are
	\[
		\lambda_j^+(x) = \sigma + \frac{\xi_j^+(x)}{a_{(j+1)N+i-1}},
		\qquad\text{and}\qquad
		\lambda_j^-(x) = \sigma + \frac{\xi_j^-(x)}{a_{(j+1)N+i-1}}.
	\]
	By Theorem \ref{thm:3}, there is a sequence $(\Phi_n : n \geq n_0)$ such that
	\begin{equation}
		\label{eq:42}
		\lim_{n \to \infty}
		\sup_{x \in K} \bigg\|\frac{\Phi_n(x)}{\prod_{j = n_0}^{n-1} \lambda_j^-(x)} - v^-(x) \bigg\|= 0
	\end{equation}
	where $v^-$ is a continuous eigenvector of $\calR_i$ corresponding to $\xi^-$. We set
	\[
		\phi_1 = B_1^{-1} \cdots B^{-1}_{n_0} \Phi_{n_0},
	\]
	and for $n \geq 1$,
	\begin{equation}
		\label{eq:15}
		\phi_{n+1} = B_n \phi_n.
	\end{equation}
	Then for $kN+i' > n_0N+i$ with $i' \in \{0, 1, \ldots, N-1\}$, we have
	\[
		\phi_{kN+i'} = 
		\begin{cases}
			B_{kN+i'}^{-1} B_{kN+i'+1}^{-1} \cdots B_{kN+i-1}^{-1} \Phi_k 
			&\text{if } i' \in \{0, 1, \ldots, i-1\}, \\
			\Phi_k & \text{if } i ' = i,\\\
			B_{kN+i'-1} B_{kN+i'-2} \cdots B_{kN+i} \Phi_k &
			\text{if } i' \in \{i+1, \ldots, N-1\}.
		\end{cases}
	\]
	Since for $i' \in \{0, 1, \ldots, i-1\}$,
	\[
		\lim_{k \to \infty} B_{kN+i'}^{-1} B_{kN+i'+1}^{-1} \cdots B_{kN+i-1}^{-1}
		=
		\frakB_{i'}^{-1}(0) \frakB_{i'+1}^{-1}(0) \cdots \frakB_{i-1}^{-1}(0),
	\]
	we obtain
	\begin{equation}
		\label{eq:27}
		\lim_{k \to \infty}
		\sup_K{
		\bigg\|
		\frac{\phi_{kN+i'}}{\prod_{j = n_0}^{k-1} \lambda^-_j} - 
		\frakB_{i'}^{-1}(0) \frakB_{i'+1}^{-1}(0) \cdots \frakB_{i-1}^{-1}(0)
		v^-
		\bigg\|
		} = 0.
	\end{equation}
	Analogously, for $i' \in \{i+1, \ldots, N-1\}$,
	\begin{equation}
		\label{eq:28}
		\lim_{k \to \infty}
		\sup_K{
		\bigg\|
		\frac{\phi_{kN+i'}}{\prod_{j = n_0}^{k-1} \lambda^-_j} -
		\frakB_{i'-1}(0) \frakB_{i'-2}(0) \cdots \frakB_{i}(0)
		v^-
		\bigg\|}
		=0.
	\end{equation}
	Since $(\phi_n : n \in \NN)$ satisfies \eqref{eq:15}, the sequence $(u_n(x) : n \in \NN_0)$ defined as
	\[
		u_n(x) = \begin{cases}
			\langle \phi_1(x), e_1 \rangle & \text{if } n = 0, \\
			\langle \phi_n(x), e_2 \rangle & \text{if } n \geq 1,
			\end{cases}
	\]
	is a generalized eigenvector associated to $x \in K$. By \eqref{eq:42}, \eqref{eq:27} and \eqref{eq:28}, for each
	$i' \in \{0, 1, \ldots, N-1\}$, $n > n_0$, and $x \in K$,
	\begin{equation}
		\label{eq:29}
		|u_{nN+i'}(x)|
		\leq
		c 
		\prod_{j = n_0}^{n-1} |\lambda_j^-(x)|.
	\end{equation}
	Since $(R_{nN+i} : n \in \NN)$ converges to $\calR_i$ uniformly on $K$, and
	\[
		\lim_{n \to \infty} a_n = \infty,
	\]
	there is $M \geq n_0$, such that for $n \geq M$,
	\[
		\frac{|\tr R_{nN+i}(x)| + \sqrt{\discr R_{nN+i}(x)}}{ a_{(n+1)N+i-1}} \leq 1.
	\]
	Therefore, for $n \geq M$,
	\[
		|\lambda_n^-(x)|
		=
		1 +
		\frac{ \sigma \tr R_{nN+i}(x) - \sqrt{\discr R_{nN+i}(x)} }{ 2 a_{(n+1)N+i-1}}.
	\]
	We next claim the following holds true.
	\begin{claim} 
		\label{clm:1}
		There are $\delta' > 0$ and $M_0 \geq M$ such that for all $n \geq M_0$ and $x \in K$,
		\begin{equation}
			\label{eq:103}
			n \frac{\sigma \tr R_{nN+i}(x) - \sqrt{\discr R_{nN+i}(x)}} {a_{(n+1)N+i-1}} 
			\leq
			-1-\delta'.
		\end{equation}
	\end{claim}
	First observe that by the Stolz--Ces\'aro theorem and Proposition~\ref{prop:10}, we have
	\begin{equation}
		\label{eq:104}
		0 \leq 
		\lim_{n \to \infty} \frac{a_{(n+1)N+i-1}}{n} =
		\lim_{n \to \infty} \big( a_{(n+1)N+i-1} - a_{nN+i-1} \big) = 
		-\sigma \tr \calR_i(x).
	\end{equation}
	Since $(R_{nN+i} : n \in \NN)$ converges to $\calR_i$ uniformly on $K$,
	\begin{equation} 
		\label{eq:105}
		\lim_{n \to \infty} 
		\Big( \sigma \tr R_{nN+i}(x) - \sqrt{\discr R_{nN+i}(x)} \Big) =
		\sigma \tr \calR_i(x) - \sqrt{\discr \calR_i(x)}.
	\end{equation}
	Thus, by \eqref{eq:104} and \eqref{eq:105} we get
	\[
		\lim_{n \to \infty}
		n \frac{ \sigma \tr R_{nN+i}(x) - \sqrt{\discr R_{nN+i}(x)}}{a_{(n+1)N+i-1}} =
		\begin{cases}
			-\infty & \text{if } \tr \calR_i = 0, \\
			-1 - \tfrac{\sqrt{\discr \calR_i}}{-\sigma \tr \calR_i} & \text{otherwise,}
		\end{cases}
	\]
	which together with \eqref{eq:104} implies \eqref{eq:103}.
	
	Now, using Claim \ref{clm:1}, we conclude for all $n \geq M_0$,
	\[
		\sup_{x \in K}
		|\lambda^-_n(x)| \leq 1 - \frac{1+\delta'}{2n}.
	\]
	Consequently, by \eqref{eq:29}, there is $c' > 0$ such that for all $i' \in \{0, 1, \ldots, N-1\}$ and $n \geq M_0$,
	\begin{align*}
		\sup_{x \in K}{|u_{nN+i}(x)|} 
		&\leq c \prod_{j = n_0}^{n-1} \bigg(1 - \frac{1+\delta'}{2j} \bigg) \\
		&\leq
		c' \exp\bigg(-\frac{1+\delta'}{2} \log (n-1) \bigg).
	\end{align*}
	Hence,
	\[
		\sum_{n = 0}^\infty \sup_{x \in K}{|u_n(x)|^2} < \infty,
	\]
	thus by the proof of \cite[Theorem 5.3]{Silva2007} we conclude that $\sigmaEss(A) \cap K = \emptyset$. Since
	$K$ was any compact subset of $\RR \setminus \overline{\Lambda}$, we obtain
	$\sigmaEss(A) \subseteq \overline{\Lambda}$, and the theorem follows.
\end{proof}

\begin{remark}
	By \cite[Proposition 9]{PeriodicIII}, the regularity \eqref{eq:129} holds true for any compact set 
	$K \subset \RR$, and all $i \in \{0, 1, \ldots, N-1 \}$, if
	\[
		\bigg( \frac{\alpha_{n-1}}{\alpha_n} a_n - a_{n-1} : n \in \NN \bigg),
		\bigg( \frac{\beta_n}{\alpha_n} a_n - b_n : n \in \NN \bigg), 
		\bigg( \frac{1}{a_n} : n \in \NN \bigg) \in \calD_1^N(\RR).
	\]
\end{remark}

\section{Periodic modulations in non-Carleman setup} \label{sec:notCarleman}
In this section we shall consider Jacobi matrices such that
\[
	\sum_{n=0}^\infty \frac{1}{a_n} < \infty.
\]
Let us start with the following general observation.
\begin{proposition}
	\label{prop:3}
	Let $N$ be a positive integer and
	\[
		X_n(z) = \prod_{j=n}^{n+N-1} B_j(z).
	\]
	Let $K$ be a compact subset of $\CC$ containing $0$, and suppose that
	\begin{equation} 
		\label{eq:115}
		\sup_{n \geq 1} \sup_{z \in K} \| B_n(z) \| < \infty.
	\end{equation}
	Then there is $c > 0$ such that
	\[
		\sup_{x \in K} \| X_n(z) - X_n(0) \| \leq c \sum_{j = 0}^{N-1} \frac{1}{a_{n+j}}.
	\]
	In particular, if
	\begin{equation}
		\label{eq:116}
		\sum_{n=0}^\infty \frac{1}{a_n} < \infty,
	\end{equation}
	then
	\[
		\sum_{n=1}^\infty \sup_{z \in K} \| X_n(z) - X_n(0) \| < \infty.
	\]
\end{proposition}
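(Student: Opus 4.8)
The plan is to reduce the whole statement to one telescoping identity. First I would record the only fact about the transfer matrices that is needed, namely that they depend on $z$ solely through one entry:
\[
	B_j(z) - B_j(0) =
	\begin{pmatrix}
		0 & 0 \\
		0 & \frac{z}{a_j}
	\end{pmatrix},
	\qquad\text{so}\qquad
	\big\| B_j(z) - B_j(0) \big\| = \frac{|z|}{a_j} \leq \frac{C_K}{a_j},
\]
where $C_K = \sup_{z \in K} |z|$, which is finite by compactness. Setting $M = \sup_{n \geq 1} \sup_{z \in K} \|B_n(z)\|$, which is finite by \eqref{eq:115} and which also bounds $\|B_n(0)\|$ since $0 \in K$, both families $(B_j(z))_j$ and $(B_j(0))_j$ are uniformly bounded in norm by $M$.

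Next I would expand the difference of the two ordered products by the standard telescoping formula. With the paper's convention $\prod_{k=n_0}^{n_1} C_k = C_{n_1} C_{n_1-1}\cdots C_{n_0}$ this reads
\[
	X_n(z) - X_n(0)
	=
	\sum_{k=n}^{n+N-1}
	\bigg( \prod_{j=k+1}^{n+N-1} B_j(z) \bigg)
	\big( B_k(z) - B_k(0) \big)
	\bigg( \prod_{j=n}^{k-1} B_j(0) \bigg),
\]
where empty products are interpreted as $\Id$. Using submultiplicativity of the operator norm together with the two bounds from the first step, the $k$-th summand has norm at most $M^{n+N-1-k} \cdot (C_K/a_k) \cdot M^{k-n} = M^{N-1} C_K / a_k$. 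Summing over $k$ and taking the supremum over $z \in K$ gives
\[
	\sup_{z \in K} \big\| X_n(z) - X_n(0) \big\|
	\leq
	M^{N-1} C_K \sum_{k=n}^{n+N-1} \frac{1}{a_k}
	=
	c \sum_{j=0}^{N-1} \frac{1}{a_{n+j}}
\]
with $c = M^{N-1} C_K$, which is the asserted estimate.

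Finally, for the displayed consequence I would sum this inequality over $n \geq 1$ and observe that in $\sum_{n \geq 1} \sum_{j=0}^{N-1} a_{n+j}^{-1}$ each term $a_m^{-1}$ occurs for at most $N$ pairs $(n,j)$, so the double sum is bounded by $N \sum_{m \geq 0} a_m^{-1}$, which is finite under \eqref{eq:116}. I do not expect a genuine obstacle: the computation is entirely routine, and the only point that needs a little care is keeping the order of the factors in the telescoping identity consistent with the paper's convention for $\prod$, so that the left and right tails are correctly identified as products of $B_j(z)$'s and of $B_j(0)$'s respectively.
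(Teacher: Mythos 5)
Your proposal is correct and follows essentially the same route as the paper: the same observation that $B_j(z)-B_j(0)$ has a single nonzero entry $z/a_j$, the same telescoping expansion of the product difference (you merely put the $B_j(z)$ factors on the left and the $B_j(0)$ factors on the right, whereas the paper does the opposite; both identities are valid), and the same uniform bound \eqref{eq:115} applied to each factor, with $0\in K$ guaranteeing that the $B_j(0)$ are also controlled. The concluding summation, which the paper leaves implicit, is carried out correctly.
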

\begin{proof}
	Let us notice that
	\[
		B_j(z) - B_j(0) =
		\begin{pmatrix}
			0 & 0 \\
			0 & \frac{z}{a_j}
		\end{pmatrix},
	\]
	thus
	\begin{equation}
		\label{eq:114}
		\| B_j(z) - B_j(0) \| \leq \frac{|z|}{a_j}.
	\end{equation}
	Since
	\[
		X_n(z) - X_n(0) =  
		\sum_{j=0}^{N-1} 
		\Bigg\{ \prod_{m=j+1}^{N-1} B_{n+m}(0) \Bigg\}
		\big( B_{n+j}(z) - B_{n+j}(0) \big)
		\Bigg\{ \prod_{m=0}^{j-1} B_{n+m}(z) \Bigg\},
	\]
	we have
	\[
		\| X_n(z) - X_n(0) \| \leq  
		\sum_{j=0}^{N-1} 
		\Bigg\{ \prod_{m=j+1}^{N-1} \big\| B_{n+m}(0) \big\| \Bigg\}
		\big\| B_{n+j}(z) - B_{n+j}(0) \big\|
		\Bigg\{ \prod_{m=0}^{j-1} \big\| B_{n+m}(z) \| \Bigg\}.
	\]
	Now the conclusion easily follows by \eqref{eq:115} and \eqref{eq:114}.
\end{proof}

The following corollary reproves the main result of \cite{Yafaev2019} obtained by a different method.
\begin{corollary}[Yafaev]
	\label{cor:3}
	Suppose that the Carleman's condition is \emph{not} satisfied and
	\begin{equation} \label{eq:134}
		\bigg( \frac{a_n}{\sqrt{a_{n-1} a_{n+1}}} - 1 : n \in \NN \bigg) \in \ell^1 \quad \text{and} \quad
		\bigg( \frac{b_n}{\sqrt{a_{n-1} a_n}} : n \in \NN \bigg) \in \calD_1.
	\end{equation}
	Let
	\begin{equation} \label{eq:136}
		q = \lim_{n \to \infty} \frac{b_n}{2 \sqrt{a_{n-1} a_n}}.
	\end{equation}
	If  $|q| \neq 1$, then for every $z \in \CC$ there is a basis $\{ u^+(z), u^-(z) \}$ of generalized eigenvectors
	associated with $z$ such that
	\begin{equation} 
		\label{eq:133}
		u^\pm_n(z) =
		\bigg(\prod_{j=1}^n \lambda_j^\pm(0) \bigg) \big( 1 + \epsilon^\pm_n(z) \big)
	\end{equation}
	where $\lambda^\pm_j(0)$ is the eigenvalue of $B_j(0)$, and
	$(\epsilon_n^\pm)$ is a sequence of holomorphic functions tending to zero uniformly on any compact subset of $\CC$.
\end{corollary}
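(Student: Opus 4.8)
The plan is to read the non-Carleman hypothesis as a degenerate $1$-periodically modulated situation: the transfer matrices $B_n(z)$ differ from $B_n(0)$ by an absolutely summable (in $n$) perturbation, because $a_n\to\infty$ fast enough, while $(B_n(0))_n$ converges in the $\calD_1$ sense to a constant matrix with nonzero discriminant. The conclusion then follows from the $r=1$ Levinson theorem, Corollary~\ref{cor:4}.

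First I would set up the splitting. Since the Carleman's condition fails, $\sum_n 1/a_n<\infty$; as $B_n(z)-B_n(0)=\diag(0,z/a_n)$, we get $\sum_n\sup_{z\in K}\|B_n(z)-B_n(0)\|\le(\sup_{z\in K}|z|)\sum_n 1/a_n<\infty$ for every compact $K\subset\CC$ (the $N=1$ instance of Proposition~\ref{prop:3}), and $E_n:=B_n(\cdot)-B_n(0)$ is a polynomial, hence holomorphic, in $z$. Next I would analyze $B_n(0)$. Writing $r_n=a_n/a_{n-1}$ one has $a_n/\sqrt{a_{n-1}a_{n+1}}=\sqrt{r_n/r_{n+1}}$, so \eqref{eq:134} gives $\sum_n|r_n/r_{n+1}-1|<\infty$; hence $\log r_n$ converges, $r_n\to r_\infty$, and since $\sum 1/a_n<\infty$ forces $a_n\to\infty$ we have $r_\infty\in[1,\infty)$, while $(r_n)\in\calD_1$. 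With $q_n:=b_n/(2\sqrt{a_{n-1}a_n})$, from \eqref{eq:134} and \eqref{eq:136} we have $(q_n)\in\calD_1$ and $q_n\to q$; combining through Lemma~\ref{lem:2} and Corollary~\ref{cor:1}, the entries $a_{n-1}/a_n=1/r_n$ and $b_n/a_n=2q_n/\sqrt{r_n}$ of $B_n(0)$ lie in $\calD_1$, and since $\det B_n(0)=1/r_n\to 1/r_\infty>0$ is bounded below, $(B_n(0))_n\in\calD_1(\GL(2,\RR))$, converging to
\[
	\calB=\begin{pmatrix}0&1\\-r_\infty^{-1}&-2q r_\infty^{-1/2}\end{pmatrix},
	\qquad \discr\calB=\frac{4(q^2-1)}{r_\infty},\qquad \det\calB=\frac{1}{r_\infty}.
\]
Here $|q|\neq1$ enters: it makes $\discr\calB\neq0$, so $\calB$ has two distinct eigenvalues $\lambda^+,\lambda^-$ with respective eigenvectors $\begin{pmatrix}1\\\lambda^\pm\end{pmatrix}$, and $\det\calB>0$.

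Then I would apply Corollary~\ref{cor:4} on a fixed compact $K\subset\CC$ with $X_n=B_n(0)$, perturbations $E_n$, limit $\calB$, and $\eta=\lambda^\pm$: this yields holomorphic $\Phi_n^\pm\colon K\to\CC^2$ with $\Phi_{n+1}^\pm=B_n(z)\Phi_n^\pm$ and $\Phi_n^\pm\big/\prod_{j}\lambda_j^\pm(0)\to\begin{pmatrix}1\\\lambda^\pm\end{pmatrix}$ uniformly on $K$, where $\lambda_j^\pm(0)$ is the eigenvalue of $B_j(0)$ tending to $\lambda^\pm$. Extending $\Phi_n^\pm$ downward by the recurrence (each $B_n$ is invertible, $\det B_n=a_{n-1}/a_n>0$) and setting $u^\pm_n(z)=\langle\Phi^\pm_n(z),e_2\rangle$ for $n\ge1$, $u^\pm_0(z)=\langle\Phi^\pm_1(z),e_1\rangle$, one obtains generalized eigenvectors (with $(u^\pm_0,u^\pm_1)=\Phi^\pm_1\neq0$, since $\Phi^\pm_1=0$ would force $\Phi^\pm\equiv0$). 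As $\lambda_n^\pm(0)\to\lambda^\pm\neq0$, a single constant renormalization of $\Phi^\pm_n$ turns the asymptotics into the precise form \eqref{eq:133} with $\epsilon^\pm_n$ holomorphic and $\epsilon^\pm_n\to0$ uniformly on $K$. The basis property follows from the Casoratian: $\det[\Phi^+_n\mid\Phi^-_n]\big/\prod_j\lambda_j^+(0)\lambda_j^-(0)\to\lambda^- -\lambda^+\neq0$, so it is eventually nonzero, hence nonzero for every $n$. Finally, since $K$ was arbitrary and a solution normalized as in \eqref{eq:133} is determined up to an ambiguity that does not affect \eqref{eq:133} (the two growth rates being genuinely distinct when $|q|\neq1$: in modulus if $|q|>1$, in argument if $|q|<1$), the constructions over an exhaustion of $\CC$ are compatible and assemble into globally defined entire functions $\epsilon^\pm_n$.

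I expect the only genuinely delicate points to be: (i) this last compatibility/renormalization bookkeeping, i.e.\ matching the output of Corollary~\ref{cor:4} (which starts at a $K$-dependent index and carries its own normalization) to the exact statement \eqref{eq:133} uniformly on all of $\CC$; and (ii) tracking, in the case $|q|<1$, that the complex-conjugate eigenvalues of $\calB$ still satisfy the uniform Levinson condition, since there $|\lambda^+/\lambda^-|=1$, which is the situation handled via the negative-discriminant branch of Theorem~\ref{thm:2} (while for $|q|>1$ one has $\tr\calB\neq0$, hence $|\lambda^+|\neq|\lambda^-|$). Everything else is a direct verification of the hypotheses of Proposition~\ref{prop:3} and Corollary~\ref{cor:4}.
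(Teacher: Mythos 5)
Your proposal is correct and follows essentially the same route as the paper: split $B_n(z)=B_n(0)+E_n(z)$ via Proposition~\ref{prop:3} using the failure of Carleman's condition, verify $(B_n(0))\in\calD_1(\GL(2,\RR))$ from \eqref{eq:134}, identify the limit $\calB$ whose discriminant is nonzero exactly when $|q|\neq 1$, and apply Corollary~\ref{cor:4}. The only differences are cosmetic: you derive the $\calD_1$ regularity of $a_{n-1}/a_n$ directly from \eqref{eq:134} where the paper cites Yafaev's Lemma~2.1, and you add the Casoratian check of linear independence and the globalization remark, both of which the paper leaves implicit.
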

\begin{proof}
	By \cite[Lemma 2.1]{Yafaev2019}
	\begin{equation} \label{eq:135}
		\bigg( \sqrt{\frac{a_{n+1}}{a_n}} : n \in \NN \bigg) \in \calD_1 \quad \text{and} \quad
		\lim_{n \to \infty} \sqrt{\frac{a_{n+1}}{a_n}} \geq 1.
	\end{equation}
	By Corollary~\ref{cor:2} and Lemma~\ref{lem:2} it implies
	\begin{equation}
		\label{eq:57}
		\bigg( \frac{a_{n-1}}{a_n} : n \in \NN \bigg) \in \calD_1.
	\end{equation}
	Next, we write
	\begin{equation} 
		\label{eq:137}
		\frac{b_n}{a_n} = \frac{b_n}{\sqrt{a_{n-1} a_n}} \sqrt{\frac{a_{n-1}}{a_n}},
	\end{equation}
	thus by \eqref{eq:134}, \eqref{eq:135} and Corollary~\ref{cor:1}
	\begin{equation}
		\label{eq:58}
		\bigg( \frac{b_n}{a_n} : n \in \NN \bigg) \in \calD_1.
	\end{equation}
	In particular, by \eqref{eq:57}  and \eqref{eq:58} we conclude that
	\[
		(B_n(0) : n \in \NN) \in \calD_1 \big( \GL(2, \RR) \big).
	\]
	Now, in view of Proposition~\ref{prop:3}
	\[
		B_n(z) = B_n(0) + E_n(z)
	\]
	where for any compact set $K \subset \CC$,
	\[
		\sum_{n=1}^\infty \sup_{z \in K} \| E_n(z) \|  < \infty.
	\]
	By \eqref{eq:57} and \eqref{eq:58}, there are $r, s \in \RR$,
	\[
		r = \lim_{n \to \infty} \frac{a_{n-1}}{a_n} \quad \text{and} \quad
		s = \lim_{n \to \infty} \frac{b_{n}}{a_n}.
	\]
	Then the limit of $(B_n(0) : n \in \NN)$ is
	\[
		\calB = 
		\begin{pmatrix}
			0 & 1  \\
			-r & -s
		\end{pmatrix}.
	\]
	Notice that
	\[
		\discr \calB = s^2 - 4r = 
		r \Big(\frac{s}{2 \sqrt{r}} - 1 \Big) 
		\Big(\frac{s}{2 \sqrt{r}} + 1 \Big).
	\]
	On the other hand, by \eqref{eq:135}, \eqref{eq:136} and \eqref{eq:137}, we can easily deduce that
	\[
		r \in (0, 1] \quad \text{and} \quad q = \frac{s}{2 \sqrt{r}}.
	\]
	Therefore, $\discr \calB \neq 0$ whenever $\abs{q} \neq 1$. Fix a compact set $K \subset \CC$. 
	If $\discr \calB > 0$, then $\calB$ has two eigenvectors
	\[
		v^+ = 
		\begin{pmatrix}
			1 \\
			\lambda^+
		\end{pmatrix}
		\qquad
		v^- =
		\begin{pmatrix}
			1 \\
			\lambda^-
		\end{pmatrix}
	\]
	corresponding to the eigenvalues
	\[
		\lambda^+ = \frac{-s + \sqrt{s^2-4r}}{2}, \qquad
		\lambda^- = \frac{-s - \sqrt{s^2-4r}}{2}.
	\]
	Since $\det \calB = r$ these eigenvalues are non-zero.
	Let us consider the system
	\[
		\Phi_{n+1} = \big( B_n(0) + E_n \big) \Phi_n.
	\]
	By Corollary \ref{cor:4}, there is a sequence of mappings $(\Phi^\pm_n : n \geq n_0)$ so that
	\begin{equation}
		\label{eq:36}
		\lim_{n \to \infty} \sup_{z \in K}{\bigg\|\frac{\Phi^\pm_n(z)}{\prod_{j=1}^{n-1} \lambda^\pm_j(0)} - v^\pm
		\bigg\|} = 0.
	\end{equation}
	Since $B_n$ is invertible for any $n$, we set
	\[
		\phi^\pm_1 = B_1^{-1} \cdots B_{n_0}^{-1} \Phi^\pm_{n_0}.
	\]
	Then for $n \geq 1$, we define
	\[
		\phi^\pm_{n+1} = B_n \phi_n^\pm.
	\]
	Finally, to obtain a generalized eigenvector associated with $z \in K$, we set
	\[
		u_n^\pm(z) = 
		\begin{cases}
			\langle \phi^\pm_1(z), e_1\rangle & \text{if } n = 0,\\
			\langle \phi^\pm_n(z), e_2\rangle & \text{if } n \geq 1.
		\end{cases}
	\]
	Now, by \eqref{eq:36} we easily see that
	\[
		u^\pm_n(z) = 
		\bigg(\prod_{j = 1}^{n-1} \lambda_j^\pm(0) \bigg)
		\big(\lambda^\pm + \epsilon^\pm_n(z)\big)
	\]
	with 
	\[
		\lim_{n \to \infty} \sup_{z \in K}{|\epsilon^\pm_n(z)|} = 0.
	\]
	Since $(\lambda^\pm_j(0))$ converges to $\lambda^\pm$, we obtain \eqref{eq:133}.
	When $\discr \calB < 0$, the reasoning is analogous.
\end{proof}

\subsection{Perturbation of the identity}
\begin{theorem}
	\label{thm:8a}
	Let $N$ be a positive integer. Let $A$ be a Jacobi matrix with $N$-periodically modulated entries so that
	$\frakX_0(0) = \sigma \Id$ for a certain $\sigma \in \{-1, 1 \}$. Assume that there are $i \in \{0, 1, \ldots, N-1 \}$,
	and a sequence of positive numbers $(\gamma_n : n \in \NN_0)$ satisfying
	\[
		\sum_{n=0}^\infty \frac{1}{\gamma_n} = \infty,
	\]
	such that $R_{nN+i}(0) = \gamma_n \big(X_{nN+i}(0) - \sigma \Id \big)$ converges to the non-zero matrix $\calR_i$.
	If $\big( R_{nN+i}(0) : n \in \NN \big)$ belongs to $\calD_1 \big( \Mat(2, \RR) \big)$, $\discr \calR_i > 0$, and
	\begin{equation}
		\label{eq:45}
		\sum_{n=0}^\infty \frac{1}{a_n} < \infty,
	\end{equation}
	then $A$ is self-adjoint if and only if there is $n_0 \geq 1$, such that
	\begin{equation} \label{eq:72}
		\sum_{n=n_0}^{\infty} 
		\prod_{j=n_0}^n 
		\bigg| 1 + \frac{\sigma \tr R_{jN+i}(0) + \sqrt{\discr R_{jN+i}(0)}}{2 \gamma_j} \bigg|^2 = \infty.
	\end{equation}
	Moreover, if $A$ is self-adjoint, then $\sigmaEss(A) = \emptyset$.
\end{theorem}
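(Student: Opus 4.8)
The plan is to extract from Corollary~\ref{cor:5} the asymptotics of two independent generalized eigenvectors, to turn the $\ell^2$-dichotomy between them into the series condition \eqref{eq:72}, and finally to make these asymptotics uniform on compact sets so as to apply the subordinacy argument used in the proofs of Theorems~\ref{thm:6} and \ref{thm:10}. Since $A$ has $N$-periodically modulated entries, $\calX_i(x) = \frakX_i(0) = \sigma \Id$ for every $x$ and every $i$, so $X_{nN+i}(x) \to \sigma \Id$ locally uniformly; in particular $\gamma_n^{-1} R_{nN+i}(0) = X_{nN+i}(0) - \sigma \Id \to 0$, which together with $R_{nN+i}(0) \to \calR_i \neq 0$ forces $\gamma_n \to \infty$. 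By Proposition~\ref{prop:3} and \eqref{eq:45}, for every compact $K \subset \RR$ one has $X_{nN+i}(z) = X_{nN+i}(0) + E_n(z)$ with $\sum_n \sup_{z \in K} \|E_n(z)\| < \infty$; we keep the \emph{unperturbed} matrices $R_{nN+i}(0)$ and absorb the $z$-dependence into the summable $E_n$. As $\discr \calR_i > 0$, for $n$ large the eigenvalues $\mu_n^\pm(0) = \tfrac12\big(\tr R_{nN+i}(0) \pm \sigma \sqrt{\discr R_{nN+i}(0)}\big)$ of $R_{nN+i}(0)$ are real and converge to the two eigenvalues $\xi^\pm$ of $\calR_i$; write $\lambda_n^\pm(0) = \sigma + \gamma_n^{-1} \mu_n^\pm(0)$ for the corresponding eigenvalues of $X_{nN+i}(0)$. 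Since $\sum 1/\gamma_n = \infty$ furnishes the uniform Levinson condition, Corollary~\ref{cor:5} applies to $\Phi_{n+1} = X_{nN+i}(z) \Phi_n$ and gives, for each of $\eta = \xi^\pm$, continuous (holomorphic) solutions $\Phi_n^\pm$ with
\[
	\Phi_n^\pm(z) \Big/ \prod_{j=n_0}^{n-1} \lambda_j^\pm(0) \longrightarrow v^\pm \quad \text{uniformly on } K,
\]
$v^\pm$ being an eigenvector of $\calR_i$. Interpolating the intermediate indices via $\phi_{m+1}^\pm = B_m(z) \phi_m^\pm$ exactly as in the proof of Theorem~\ref{thm:6} — the matrices $B_m(z)^{\pm1}$ being uniformly bounded on $K$ because $B_m(z) \to \frakB_{\cdot}(0)$ — produces generalized eigenvectors $(u_m^\pm(z))$, with $(u_0^\pm,u_1^\pm)(z)\neq 0$ on $K$, satisfying $\sup_{z\in K}\|\phi_m^\pm(z)\|^2 \lesssim P_n^\pm$ whenever $m$ lies in the block indexed by $n$ and $\|\phi_m^\pm(0)\|^2 \asymp P_n^\pm$ at the sampled index $m=nN+i$, where $P_n^\pm := \prod_{j=n_0}^{n-1} |\lambda_j^\pm(0)|^2$. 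Noting that $\sigma \mu_j^+(0) = \tfrac12\big(\sigma \tr R_{jN+i}(0) + \sqrt{\discr R_{jN+i}(0)}\big)$ identifies $\prod_{j=n_0}^{n}|\lambda_j^+(0)|^2$ with the $n$-th term of the series in \eqref{eq:72}.

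\emph{Self-adjointness.} Because $\sqrt{\discr R_{jN+i}(0)}$ is bounded away from $0$ and $\sum 1/\gamma_j = \infty$, we get $\sum_j \gamma_j^{-1}\sqrt{\discr R_{jN+i}(0)} = \infty$, hence $P_n^+/P_n^- \to \infty$; therefore $u^+$ and $u^-$ are linearly independent and span the space of solutions, while $0 < |\lambda_j^-(0)| < |\lambda_j^+(0)|$ for $j$ large gives $P_n^- \lesssim P_n^+$. Consequently $\sum_m |u_m^+(0)|^2$ and $\sum_n P_n^+$ converge or diverge together, and the latter is — up to a fixed constant factor and a shift of index — the series in \eqref{eq:72}. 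If \eqref{eq:72} holds, then $u^+ \notin \ell^2$, so $A$ is in the limit point case, i.e.\ self-adjoint. If \eqref{eq:72} fails, then $\sum_n P_n^+ < \infty$, hence $\sum_n P_n^- < \infty$, so every generalized eigenvector associated with $0$ belongs to $\ell^2$; thus $A$ is in the limit circle case and is \emph{not} self-adjoint. This is the asserted equivalence.

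\emph{Essential spectrum.} Suppose now $A$ is self-adjoint and fix a compact $K \subset \RR$. Running the construction above with $\eta = \xi^-$ gives $(u_m^-(z))$ with $\sup_{z \in K} |u_m^-(z)|^2 \lesssim P_n^-$ on the block indexed by $n$. Since $\det B_j = a_{j-1}/a_j$, the product $\prod_{j=n_0}^{n-1} \det X_{jN+i}(0)$ telescopes and
\[
	P_n^+ P_n^- = \prod_{j=n_0}^{n-1} \big(\det X_{jN+i}(0)\big)^2 = \bigg( \frac{a_{n_0 N + i - 1}}{a_{nN + i - 1}} \bigg)^2,
\]
whence $P_n^- = \big(a_{n_0 N + i - 1}/a_{nN+i-1}\big)\big(P_n^+/P_n^-\big)^{-1/2}$. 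As $\big(P_n^+/P_n^-\big)^{-1/2}$ is bounded (indeed $\to 0$) and $\sum_n a_{nN+i-1}^{-1} \le \sum_m a_m^{-1} < \infty$ by \eqref{eq:45}, this yields $\sum_n P_n^- < \infty$, and therefore $\sum_m \sup_{z \in K} |u_m^-(z)|^2 < \infty$. By the argument of \cite[Theorem 5.3]{Silva2007} employed in Theorems~\ref{thm:6} and \ref{thm:10}, $\sigmaEss(A) \cap K = \emptyset$; as $K$ was an arbitrary compact subset of $\RR$, $\sigmaEss(A) = \emptyset$.

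The main obstacle is the uniform bookkeeping rather than anything conceptual: one must interpolate the sampled transfer matrices $X_{nN+i}$ so that the two-sided asymptotics survive uniformly on $K$, and control the products $P_n^\pm$ even though $\gamma_n$ need not be comparable to $a_n$. What rescues the argument is precisely the determinant identity, which, combined with the failure of Carleman's condition $\sum 1/a_n < \infty$, forces the subordinate solution to be uniformly square-summable; the a priori worrying possibility that $\sum_n P_n^-$ diverges despite $P_n^-\to 0$ is ruled out exactly by this mechanism.
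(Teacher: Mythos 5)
Your proposal is correct and follows essentially the same route as the paper: reduce to the unperturbed $z=0$ matrices via Proposition~\ref{prop:3} and the failure of Carleman's condition, apply the Levinson-type Corollary~\ref{cor:5} to produce two generalized eigenvectors with asymptotics governed by $\prod_j|\lambda_j^\pm(0)|^2$, read off the self-adjointness dichotomy from whether $\sum_n P_n^+$ diverges, and use the telescoping determinant identity $\prod_j \lambda_j^+(0)\lambda_j^-(0)=a_{n_0N+i-1}/a_{nN+i-1}$ together with \eqref{eq:45} and $|\lambda_j^-|\le|\lambda_j^+|$ to get uniform square-summability of the subordinate solution and hence $\sigmaEss(A)=\emptyset$. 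All the key steps match the paper's proof, including the identification of the series \eqref{eq:72} with $\prod_{j}|\lambda_j^+(0)|^2$.
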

\begin{proof}
	Since $\discr \calR_i > 0$, there are $\delta > 0$ and $n_0 \in \NN$, such that for all $j \geq n_0$,
	\[
		\discr R_{jN+i}(0) > \delta.
	\]
	Hence, the matrix $R_{jN+i}(0)$ has two distinct eigenvalues
	\[
		\xi^+_j(0) = \frac{\tr R_{jN+i}(0) + \sigma \sqrt{\discr R_{jN+i}(0)}}{2},
		\qquad\text{and}\qquad
		\xi^-_j(0) = \frac{\tr R_{jN+i}(0) - \sigma \sqrt{\discr R_{jN+i}(0)}}{2},
	\]
	thus the matrix $X_{jN+i}(0)= \sigma \Id + \gamma_j^{-1}R_{jN+i}(0)$ has two eigenvalues
	\[
		\lambda^+_j(0) = \sigma + \frac{\xi^+_j(0)}{\gamma_j},
		\qquad\text{and}\qquad
		\lambda^-_j(0) = \sigma + \frac{\xi^-_j(0)}{\gamma_j}.
	\]
	Let $K$ be any compact subset of $\RR$. By Proposition~\ref{prop:3}, we can write
	\[
		X_{nN+i}(x) = \sigma \Id + \frac{1}{\gamma_n} R_{nN+i}(0) + E_{nN+i}(x)
	\]
	where
	\[
		\sum_{n=0}^\infty \sup_{x \in K} \| E_{nN+i}(x) \| < \infty.
	\]
	Since $(R_{jN+i}(0) : j \in \NN)$ belongs to $\calD_1\big(\Mat(2, \RR)\big)$, by Corollary~\ref{cor:5},
	there are two sequences $\big( \Phi_j^- : j \geq n_0 \big)$ and $\big( \Phi_j^+ : j \geq n_0 \big)$ satisfying
	\[
		\Phi_{j+1} = \big( X_{jN+i}(0) + E_{jN+i}\big) \Phi_j,
	\]
	and such that
	\begin{equation} \label{eq:140}
		\lim_{n \to \infty} \sup_K
		\bigg\| \frac{\Phi_n^\pm}{\prod_{j = n_0}^{n-1} \lambda^\pm_j(0)} - v^\pm \bigg\| = 0
	\end{equation}
	for certain $v^-, v^+ \neq 0$. Let
	\[
		\phi_1^\pm = B_1^{-1} B_2^{-1} \cdots B_{n_0}^{-1} \Phi_{n_0}^\pm.
	\]
	For $n \geq 1$, we set
	\[
		\phi_{n+1}^\pm = B_n \phi_n^\pm.
	\]
	Then for $kN+i' > n_0N+i$ with $i' \in \{0, 1, \ldots, N-1\}$, we have
	\[
		\phi_{kN+i'}^\pm = 
		\begin{cases}
			B_{kN+i'}^{-1} B_{kN+i'+1}^{-1} \cdots B_{kN+i-1}^{-1} \Phi_k^\pm
			&\text{if } i' \in \{0, 1, \ldots, i-1\}, \\
			\Phi_k^\pm & \text{if } i ' = i,\\\
			B_{kN+i'-1} B_{kN+i'-2} \cdots B_{kN+i} \Phi_k^\pm &
			\text{if } i' \in \{i+1, \ldots, N-1\}.
		\end{cases}
	\]
	Consequently, we obtain
	\begin{equation} 
		\label{eq:71}
		\lim_{k \to \infty} \frac{\phi_{kN+i'}^\pm}{\prod_{j = n_0}^n \lambda^\pm_j(0)} = 
		\begin{cases}
			\frakB_{i'}^{-1}(0) \frakB_{i'+1}^{-1}(0) \cdots \frakB_{i-1}^{-1}(0) v^\pm	
			&\text{if } i' \in \{0, 1, \ldots, i-1\} \\
			v^\pm &\text{if } i' = i, \\
			\frakB_{i'-1}(0) \frakB_{i'-2}(0) \cdots \frakB_{i}(0) v^\pm
			&\text{if } i' \in \{i+1, \ldots ,N-1\},
		\end{cases}
	\end{equation}
	uniformly on $K$. Let
	\[
		u_n^\pm(x) =
		\begin{cases}
			\langle \phi_1^\pm(x), e_1 \rangle & \text{if } n = 0, \\
			\langle \phi_n^\pm(x), e_2 \rangle & \text{if } n \geq 1.
		\end{cases}
	\]
	Then $(u_n^+(x) : n \in \NN_0)$ and $(u_n^-(x) : n \in \NN_0)$ are two generalized eigenvectors associated with
	$x \in K$. Since their asymptotic behavior is different from each other, they are linearly independent. 
	
	By \eqref{eq:140} and \eqref{eq:71}, there is a constant $c>0$ such that for all $n > n_0$, and $x \in K$,
	\begin{equation}
		\label{eq:44}
		\big| u^{\pm}_{nN+i}(x) \big|^2 + \big| u^{\pm}_{nN+i-1}(x) \big|^2
		=
		\big\| \phi^{\pm}_{nN+i}(x) \big\|^2
		\geq
		c \prod_{j = n_0}^{n-1} \big| \lambda_j^{\pm}(0) \big|^2.
	\end{equation}
	Moreover, for all $n > n_0$, $i' \in \{0, 1, \ldots, N-1\}$, and $x \in K$,
	\begin{equation}
		\label{eq:48a}
		\big\| \phi^\pm_{nN+i'}(x) \big\|^2 \leq 
		c \prod_{j = n_0}^{n-1} \big| \lambda_j^{\pm}(0) \big|^2.
	\end{equation}	
	Since $|\lambda_{j}^{-}(0)| \leq |\lambda_{j}^{+}(0)|$, we obtain
	\begin{equation}
		\label{eq:73}
		\sum_{n=n_0+1}^\infty |u_n^-(x)|^2 \leq c
		\sum_{n=n_0+1}^\infty |u_n^+(x)|^2.
	\end{equation}
	Now, observe that if \eqref{eq:72} is satisfied then by \eqref{eq:44} the generalized eigenvector
	$(u_n^+(x) : n \in \NN_0)$ is not square-summable, hence by \cite[Theorem 6.16]{Schmudgen2017}, the operator
	$A$ is self-adjoint. On the other-hand, if \eqref{eq:72} is not satisfied, then by \eqref{eq:48a} and \eqref{eq:73}, 
	all generalized eigenvectors are square-summable, thus by \cite[Theorem 6.16]{Schmudgen2017}, the operator
	$A$ is not self-adjoint.

	Finally, let us suppose that $A$ is self-adjoint. By the proof of \cite[Theorem 5.3]{Silva2007}, if
	\begin{equation} 
		\label{eq:70}
	    \sum_{n=0}^\infty \sup_{x \in K} | u^-_n(x) |^2 < \infty,
	\end{equation}
	then $\sigmaEss(A) \cap K = \emptyset$, and since $K$ is any compact subset of $\RR$ this implies that 
	$\sigmaEss(A) = \emptyset$. Therefore, to complete the proof of the theorem it is enough to show \eqref{eq:70}.
	Observe that $E_{nN+i}(0)=0$, thus 
	\[
		\lambda_j^+(0) \lambda_j^-(0) = 
		\det X_{jN+i}(0) = \frac{a_{jN+i-1}}{a_{(j+1)N+i-1}},
	\]
	and so
	\[
		\prod_{j=n_0}^k
		\lambda^-_j(0) \lambda^+_j(0)
		=
		\frac{a_{n_0 N+i-1}}{a_{(k+1)N+i-1}}.
	\]
	Consequently, by \eqref{eq:45},
	\[
		\sum_{k=n_0}^\infty 
		\prod_{j=n_0}^k
		\big| \lambda^-_j(0) \lambda^+_j(0) \big| 
		=
		\sum_{k=n_0}^\infty \frac{a_{n_0 N+i-1}}{a_{(k+1)N+i}} < \infty,
	\]
	which together with $| \lambda^-_j(0) | \leq | \lambda^+_j(0) |$ implies that
	\[
		\sum_{k=n_0}^\infty 
		\prod_{j=n_0}^k
		\big| \lambda^-_j(0) \big|^2 
		< \infty.
	\]
	Hence, by \eqref{eq:48a} we obtain \eqref{eq:70}, and the theorem follows.
\end{proof}

By the similar reasoning one can prove the following theorem.
\begin{theorem} 
	\label{thm:8}
	Let $N$ be a positive integer. Let $A$ be a Jacobi matrix with $N$-periodically
	modulated entries so that $\frakX_0(0) = \sigma \Id$ for a certain $\sigma \in \{-1, 1 \}$. Assume that there are 
	$i \in \{0, 1, \ldots, N-1 \}$ and a sequence of positive numbers $(\gamma_n : n \in \NN_0)$ such that
	$R_{nN+i}(0) = \gamma_n \big(X_{nN+i}(0) - \sigma \Id\big)$ converges to the non-zero matrix $\calR_i$.
	If $\big( R_{nN+i}(0) : n \in \NN \big)$ belongs to $\calD_1 \big( \Mat(2, \RR) \big)$, $\discr \calR_i < 0$, 
	and 
	\[
		\sum_{n=0}^\infty \frac{1}{a_n} < \infty,
	\]
	then the operator $A$ is \emph{not} self-adjoint.
\end{theorem}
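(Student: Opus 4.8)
The plan is to run the argument of Theorem~\ref{thm:8a} essentially verbatim, the only structural difference being that now the limiting matrix $\calR_i$ has a pair of complex conjugate eigenvalues, so the two linearly independent generalized eigenvectors share the same growth and \emph{both} turn out to be square-summable; by the limit-point/limit-circle dichotomy this forces $A$ not to be self-adjoint.

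First I would record the diagonalization data. Since $R_{nN+i}(0)\to\calR_i$ and $\discr\calR_i<0$, there are $\delta>0$ and $n_0\in\NN$ with $\discr R_{nN+i}(0)<-\delta$ for $n\ge n_0$, so $R_{nN+i}(0)$ has complex conjugate eigenvalues $\xi_n^\pm(0)=\tfrac12\big(\tr R_{nN+i}(0)\pm i\sqrt{|\discr R_{nN+i}(0)|}\big)$, whence $X_{nN+i}(0)=\sigma\Id+\gamma_n^{-1}R_{nN+i}(0)$ has the complex conjugate eigenvalues $\lambda_n^\pm(0)=\sigma+\gamma_n^{-1}\xi_n^\pm(0)$; in particular $|\lambda_n^+(0)|=|\lambda_n^-(0)|$ and $\lambda_n^+(0)\lambda_n^-(0)=\det X_{nN+i}(0)$. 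Next, fixing a compact interval $K\subset\RR$ and noting that $a_n\to\infty$ gives hypothesis \eqref{eq:115}, Proposition~\ref{prop:3} together with \eqref{eq:45} yields $X_{nN+i}(x)=\sigma\Id+\gamma_n^{-1}R_{nN+i}(0)+E_{nN+i}(x)$ with $\sum_n\sup_{x\in K}\|E_{nN+i}(x)\|<\infty$ and $E_{nN+i}(0)=0$. Since $X_{nN+i}(0)\to\sigma\Id$ (because $\frakX_i(0)$ is conjugate to $\frakX_0(0)=\sigma\Id$) and $(R_{nN+i}(0))\in\calD_1(\Mat(2,\RR))$, the negative-discriminant case of Corollary~\ref{cor:5} applies — no summability of $\gamma_n^{-1}$ being required there — and produces $(\Phi_j^\pm)$ with $\Phi_{j+1}^\pm=(X_{jN+i}(0)+E_{jN+i})\Phi_j^\pm$ and
\[
	\lim_{n\to\infty}\sup_{x\in K}\bigg\|\frac{\Phi_n^\pm(x)}{\prod_{j=n_0}^{n-1}\lambda_j^\pm(0)}-v^\pm\bigg\|=0,
\]
where $v^\pm\ne0$ are eigenvectors of $\calR_i$ for the two distinct eigenvalues, hence linearly independent. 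Exactly as in the proof of Theorem~\ref{thm:8a} one sets $\phi_1^\pm=B_1^{-1}\cdots B_{n_0}^{-1}\Phi_{n_0}^\pm$, $\phi_{n+1}^\pm=B_n\phi_n^\pm$, and extracts the generalized eigenvectors $u^\pm$; the analogues of \eqref{eq:71} and \eqref{eq:48a} give $\|\phi_{nN+i'}^\pm(x)\|^2\le c\prod_{j=n_0}^{n-1}|\lambda_j^\pm(0)|^2$ for all $i'\in\{0,1,\ldots,N-1\}$.

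The decisive computation comes next: because $E_{nN+i}(0)=0$, telescoping gives $\prod_{j=n_0}^{k}\lambda_j^+(0)\lambda_j^-(0)=\prod_{j=n_0}^{k}\det X_{jN+i}(0)=a_{n_0N+i-1}/a_{(k+1)N+i-1}$, so by \eqref{eq:45}
\[
	\sum_{k=n_0}^\infty\prod_{j=n_0}^{k}|\lambda_j^-(0)|^2=a_{n_0N+i-1}\sum_{k=n_0}^\infty\frac{1}{a_{(k+1)N+i-1}}<\infty,
\]
and since $|\lambda_j^+(0)|=|\lambda_j^-(0)|$ the same bound holds with $\lambda_j^+(0)$. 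Combining this with $\|\phi_{nN+i'}^\pm(x)\|^2\le c\prod_{j=n_0}^{n-1}|\lambda_j^\pm(0)|^2$ and summing over $n$ and over the $N$ residues $i'$ shows $\sum_m|u_m^\pm(x)|^2<\infty$ for every $x\in K$. Finally, to see that $u^+$ and $u^-$ span the whole solution space one uses the Wronskian-type identity $\det[\Phi_n^+\mid\Phi_n^-]=\big(\prod_{j=n_0}^{n-1}\lambda_j^+(0)\lambda_j^-(0)\big)(\det[v^+\mid v^-]+o(1))=\tfrac{a_{n_0N+i-1}}{a_{nN+i-1}}(\det[v^+\mid v^-]+o(1))\ne0$ for large $n$; since the transfer matrices are invertible, $\Phi_{n_0}^+,\Phi_{n_0}^-$, and therefore $u^+,u^-$, are linearly independent. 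Hence every generalized eigenvector associated with a fixed real $x\in K$ is a linear combination of $u^+$ and $u^-$, thus square-summable, and \cite[Theorem 6.16]{Schmudgen2017} gives that $A$ is not self-adjoint. The one point demanding more care than in Theorem~\ref{thm:8a} is precisely this last step: in the present regime $u^+$ and $u^-$ grow at the same rate, so one cannot tell them apart by size and must instead invoke the non-vanishing of their Wronskian to conclude that the entire solution space lies in $\ell^2$.
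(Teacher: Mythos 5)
Your proposal is correct and is precisely the ``similar reasoning'' to Theorem~\ref{thm:8a} that the paper invokes without writing out: the paper leaves this proof to the reader, and your version supplies the right details, in particular that $|\lambda_j^+(0)|=|\lambda_j^-(0)|$ makes $\prod_{j}|\lambda_j^\pm(0)|^2=a_{n_0N+i-1}/a_{(k+1)N+i-1}$ summable for \emph{both} solutions. Your observation that linear independence must now come from the non-vanishing limit of the Wronskian $\det[\Phi_n^+\mid\Phi_n^-]/\prod_j\lambda_j^+(0)\lambda_j^-(0)$ (rather than from differing growth rates, as in Theorem~\ref{thm:8a}) is exactly the one point where the argument genuinely differs, and you handle it correctly.
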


Proposition~\ref{prop:7} motivates us to the following notion.
Given a sequence $(w_n : n \in \NN)$ such that $w_n > 0$ for all $n \in \NN$, we introduce the
weighted Stolz class. We say that $(x_n)$ a bounded sequence from a normed vector space $X$ belongs to
$\calD_1(X; w)$, if
\[
	\sum_{n = 1}^\infty \left\|\Delta x_n \right\| w_n < \infty.
\]
Moreover, given a positive integer $N$, we say that $x \in \calD^N_{1}(X; w)$ if for each 
$i \in \{0, 1, \ldots, N-1 \}$,
\[
	 \big( x_{nN+i} : n \in \NN \big) \in \calD_{1}(X; w).
\]

Similar reasoning to \cite[Corollary 1]{SwiderskiTrojan2019} leads to the following fact.
\begin{proposition}
	\label{prop:4}
	For any weight $(w_n)$
	\begin{enumerate}[(i), leftmargin=2em]
		\item If $(x_n), (y_n) \in \calD_1(X; w)$, then $(x_n y_n), (x_n + y_n) \in \calD_1(X;w)$.
		\item If $(x_n) \in \calD_1(K, \CC; w)$, and $\|x_n(t)\| \geq c > 0$ for all $n \geq \NN_0$ and $t \in K$, then
		$(x_n^{-1}) \in \calD_1(K, \CC; w)$.
	\end{enumerate}
\end{proposition}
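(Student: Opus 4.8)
The plan is to follow the proofs of Corollary~\ref{cor:1} and Corollary~\ref{cor:2} almost verbatim, simply replacing the $\ell^{r/(j+s)}$-type sums there by the single weighted sum $\sum_n \|\Delta x_n\| w_n$ that defines $\calD_1(X;w)$. For part (i), the assertion for $x_n + y_n$ is immediate: $\Delta(x_n + y_n) = \Delta x_n + \Delta y_n$, so $\|\Delta(x_n+y_n)\| w_n \le \|\Delta x_n\| w_n + \|\Delta y_n\| w_n$, and summing over $n$ gives the required finiteness, while $(x_n + y_n)$ is bounded since both summands are. For the product I would invoke Lemma~\ref{lem:1} with $j = 1$, which yields $\Delta(x_n y_n) = (\Delta x_n)\,y_{n+1} + x_n\,(\Delta y_n)$. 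Setting $M_x = \sup_n \|x_n\|$ and $M_y = \sup_n \|y_n\|$, both finite by hypothesis, we get $\|\Delta(x_n y_n)\| \le M_y \|\Delta x_n\| + M_x \|\Delta y_n\|$, hence $\sum_n \|\Delta(x_n y_n)\| w_n \le M_y \sum_n \|\Delta x_n\| w_n + M_x \sum_n \|\Delta y_n\| w_n < \infty$; boundedness of $(x_n y_n)$ is clear.

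For part (ii), since the values lie in $\CC$, I would use the identity $\Delta(x_n^{-1}) = x_{n+1}^{-1} - x_n^{-1} = -\,\Delta x_n /(x_n x_{n+1})$, valid wherever $x_n$ does not vanish. The hypothesis $|x_n(t)| \ge c > 0$ for all $t \in K$ and all $n \ge N_0$ (for some $N_0 \in \NN_0$) gives $|x_n(t) x_{n+1}(t)| \ge c^2$ there, so $\sup_{t \in K} |\Delta(x_n^{-1})(t)| \le c^{-2} \sup_{t \in K} |\Delta x_n(t)|$ for $n \ge N_0$. Summing this against $(w_n)$ over $n \ge N_0$ is finite, and the finitely many terms with $n < N_0$ contribute only a finite amount, so $\sum_n \sup_K |\Delta(x_n^{-1})| w_n < \infty$; moreover $(x_n^{-1})$ is bounded on $K$ for $n \ge N_0$ by $c^{-1}$, which together with the finitely many initial terms gives boundedness of the whole sequence.

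There is no genuine obstacle here; the proof is short and closely parallels \cite[Corollary 1]{SwiderskiTrojan2019} together with Corollary~\ref{cor:2} above. The only two points needing a little attention are: keeping the supremum norm over $t \in K$ throughout part (ii) — every estimate is pointwise in $t$ and one then takes $\sup_{t \in K}$, which is harmless because the constants $M_x$, $M_y$, $c$ are uniform in $t$ — and the fact that the lower bound on $|x_n|$ is only assumed past the index $N_0$, so one should split off the tail $n \ge N_0$ and absorb the finitely many initial terms into the final constant. Both are entirely routine.
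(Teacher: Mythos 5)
Your proof is correct and is exactly the argument the paper intends: the paper gives no proof of Proposition~\ref{prop:4}, merely noting that it follows by the same reasoning as \cite[Corollary 1]{SwiderskiTrojan2019}, and your write-up (discrete Leibniz rule via Lemma~\ref{lem:1} for the product, the identity $\Delta(x_n^{-1}) = -\Delta x_n/(x_n x_{n+1})$ with the uniform lower bound for the inverse, and the routine handling of finitely many initial terms) supplies precisely those details with the weight $w_n$ carried through each sum.
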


The following proposition describes a way to construct matrices $(R_n : n \in \NN)$ satisfying the hypotheses of
Theorems \ref{thm:8a} and \ref{thm:8}.
\begin{proposition}
	\label{prop:2}
	Let $N$ be a positive integer and $i \in \{0, 1, \ldots, N-1\}$. Let $A$ be a Jacobi matrix with $N$-periodically
	modulated entries so that $\frakX_0(0) = \sigma \Id$ for a certain $\sigma \in \{-1, 1 \}$. Assume that there is
	$(\gamma_n : n \in \NN_0)$ a sequence of positive numbers such that
	$R_{nN+i}(0) = \gamma_n\big(X_{nN+i}(0) - \sigma \Id\big)$ converges to non-zero matrix $\calR_i$.
	If there are two $N$-periodic sequences $(\tilde{s}_{i'} : i' \in \NN_0)$ and 
	$(\tilde{z}_{i'} : i' \in \NN_0)$ such that
	\begin{equation}
		\label{eq:117}
		\tilde{s}_{i'} = 
		\lim_{n \to \infty} \gamma_n \Big(\frac{\alpha_{i'-1}}{\alpha_{i'}} - \frac{a_{nN+i'-1}}{a_{nN+i'}} \Big),
		\qquad\text{and}\qquad
		\tilde{z}_{i'} = 
		\lim_{n \to \infty} \gamma_n \Big(\frac{\beta_{{i'}}}{\alpha_{i'}} - \frac{b_{nN+i'}}{a_{nN+i'}} \Big),
	\end{equation}
	then
	\begin{equation} 
		\label{eq:118}
		\calR_i
		=
		\sum_{j=0}^{N-1}
		\left\{ \prod_{m=j+1}^{N-1} \frakB_{i+m}(0) \right\}
		\begin{pmatrix}
			0 & 0 \\
			\tilde{s}_{i+j} & \tilde{z}_{i+j}
		\end{pmatrix}
		\left\{ \prod_{m=0}^{j-1} \frakB_{i+m}(0) \right\}
	\end{equation}
	and
	\begin{equation} \label{eq:118a}
		\tr \calR_i = -\sigma \sum_{j=0}^{N-1} \tilde{s}_{i+j} \frac{\alpha_{i+j}}{\alpha_{i+j-1}}.
	\end{equation}
	Moreover, if there is a weight $(w_n : n \in \NN)$ so that for all $i' \in \{0, 1, \ldots, N-1 \}$,
	\begin{equation} 
		\label{eq:120}
		\bigg( \frac{1}{\gamma_n} : n \in \NN \bigg), 
		\bigg( \gamma_n \Big(\frac{\alpha_{i'-1}}{\alpha_{i'}} - \frac{a_{nN+i'-1}}{a_{nN+i'}} \Big) : n \in \NN \bigg),
		\bigg( \gamma_n \Big(\frac{\beta_{i'}}{\alpha_{i'}} - \frac{b_{nN+i'}}{a_{nN+i'}} \Big) : n \in \NN \bigg) 
		\in \calD_1(\RR; w),
	\end{equation}
	then
	\begin{equation} 
		\label{eq:121}
		\big( R_{nN+i}(0) : n \in \NN \big) \in \calD_1 \big( \Mat(2, \RR); w\big).
	\end{equation}
\end{proposition}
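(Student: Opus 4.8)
The plan is to read off all three assertions from the product--difference expansion of $X_{nN+i}(0)-\sigma\Id$, mimicking the proof of Proposition~\ref{prop:10} but keeping track of the weight $w$ in the last part. First I record that $\frakX_k(0)=\sigma\Id$ for every $k$: since $\frakX_{k+1}(0)=\frakB_k(0)\,\frakX_k(0)\,\frakB_k(0)^{-1}$ (the factors $\frakB_k(0)$ being invertible because $\det\frakB_k(0)=\alpha_{k-1}/\alpha_k>0$) and $\frakX_0(0)=\sigma\Id$, this follows by induction. Writing $X_{nN+i}(0)=\prod_{m=0}^{N-1}B_{nN+i+m}(0)$ and $\frakX_i(0)=\prod_{m=0}^{N-1}\frakB_{i+m}(0)$ and using the telescoping identity
\[
	X_{nN+i}(0)-\sigma\Id
	=
	\sum_{j=0}^{N-1}
	\Bigl\{\prod_{m=j+1}^{N-1}B_{nN+i+m}(0)\Bigr\}
	\bigl(B_{nN+i+j}(0)-\frakB_{i+j}(0)\bigr)
	\Bigl\{\prod_{m=0}^{j-1}\frakB_{i+m}(0)\Bigr\},
\]
I observe that $B_\ell(0)-\frakB_\ell(0)$ has only its bottom row non-zero, so by \eqref{eq:117} (and the $N$-periodicity of $\alpha,\beta,\tilde s,\tilde z$, which handles indices $\ge N$)
\[
	\lim_{n\to\infty}\gamma_n\bigl(B_{nN+i+j}(0)-\frakB_{i+j}(0)\bigr)
	=
	\begin{pmatrix} 0 & 0 \\ \tilde s_{i+j} & \tilde z_{i+j}\end{pmatrix},
\]
whereas $B_{nN+i+m}(0)\to\frakB_{i+m}(0)$. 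Multiplying the displayed identity by $\gamma_n$ and letting $n\to\infty$ gives \eqref{eq:118}.

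For \eqref{eq:118a} I take the trace of \eqref{eq:118}. Using linearity and cyclic invariance of the trace together with the $N$-periodicity relation $\bigl\{\prod_{m=0}^{j-1}\frakB_{i+m}(0)\bigr\}\bigl\{\prod_{m=j+1}^{N-1}\frakB_{i+m}(0)\bigr\}=\prod_{m=j+1}^{N+j-1}\frakB_{i+m}(0)$, one obtains
\[
	\tr\calR_i
	=
	\sum_{j=0}^{N-1}
	\tr\Bigl\{
	\begin{pmatrix} 0 & 0 \\ \tilde s_{i+j} & \tilde z_{i+j}\end{pmatrix}
	\prod_{m=j+1}^{N+j-1}\frakB_{i+m}(0)
	\Bigr\},
\]
which is exactly the sum in \eqref{eq:101}. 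From here the computation is word-for-word that of Proposition~\ref{prop:10}: by \cite[Proposition 3]{PeriodicIII} the bottom row of $\prod_{m=j+1}^{N+j-1}\frakB_{i+m}(0)$ is $\bigl(-\tfrac{\alpha_{i+j}}{\alpha_{i+j+1}}\frakp^{[i+j+2]}_{N-2}(0),\ \frakp^{[i+j+1]}_{N-1}(0)\bigr)$, and since $\frakX_0(0)=\sigma\Id$ forces $\frakp^{[k]}_{N-1}(0)=0$ and $\frakp^{[k+1]}_{N-2}(0)=-\sigma\,\tfrac{\alpha_k}{\alpha_{k-1}}$ (cf. \eqref{eq:18} and \cite[formula (13)]{PeriodicIII}), the $j$-th summand equals $\tilde s_{i+j}\,\frakp^{[i+j+1]}_{N-2}(0)=-\sigma\,\tfrac{\alpha_{i+j}}{\alpha_{i+j-1}}\,\tilde s_{i+j}$. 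Summing over $j$ yields \eqref{eq:118a}.

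For the regularity \eqref{eq:121} I use the same telescoping identity, now applied to $R_{nN+i}(0)=\gamma_n(X_{nN+i}(0)-\sigma\Id)$ directly:
\[
	R_{nN+i}(0)
	=
	\sum_{j=0}^{N-1}
	\Bigl\{\prod_{m=j+1}^{N-1}B_{nN+i+m}(0)\Bigr\}
	\cdot\gamma_n\bigl(B_{nN+i+j}(0)-\frakB_{i+j}(0)\bigr)\cdot
	\Bigl\{\prod_{m=0}^{j-1}\frakB_{i+m}(0)\Bigr\}.
\]
Since $\calD_1(\cdot\,;w)$ is closed under sums and (matrix) products by Proposition~\ref{prop:4}, it suffices to check that each of the three factors, viewed as a sequence in $n$, lies in $\calD_1\bigl(\Mat(2,\RR);w\bigr)$. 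The rightmost factor is constant, hence trivially in the class. The middle factor has its non-zero entries among the sequences appearing in \eqref{eq:120}, so it lies in the class by hypothesis. For the leftmost factor, each $B_{nN+i+m}(0)$ has entries $0$, $1$, $-\tfrac{a_{nN+i'-1}}{a_{nN+i'}}$, $-\tfrac{b_{nN+i'}}{a_{nN+i'}}$; writing
\[
	\frac{a_{nN+i'-1}}{a_{nN+i'}}
	=
	\frac{\alpha_{i'-1}}{\alpha_{i'}}
	-\frac{1}{\gamma_n}\cdot\gamma_n\Bigl(\frac{\alpha_{i'-1}}{\alpha_{i'}}-\frac{a_{nN+i'-1}}{a_{nN+i'}}\Bigr)
\]
and similarly for $\tfrac{b_{nN+i'}}{a_{nN+i'}}$, these ratios are constants minus products of two sequences from \eqref{eq:120}, hence belong to $\calD_1(\RR;w)$ by Proposition~\ref{prop:4}(i); therefore $B_{nN+i+m}(0)\in\calD_1\bigl(\Mat(2,\RR);w\bigr)$ and so is the product over $m$. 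Boundedness of $R_{nN+i}(0)$ is automatic since $R_{nN+i}(0)\to\calR_i$. This establishes \eqref{eq:121}.

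The routine parts are the two telescoping expansions and the closure properties of the weighted Stolz class; the genuinely delicate point is the trace computation, which rests on the non-obvious values $\frakp^{[k]}_{N-1}(0)=0$ and $\frakp^{[k+1]}_{N-2}(0)=-\sigma\alpha_k/\alpha_{k-1}$ forced by $\frakX_0(0)=\sigma\Id$ and imported from \cite{PeriodicIII}. One must also keep the $N$-periodic index bookkeeping consistent: when $i+m\ge N$ the index $nN+i+m$ lies in the next period-block, which is harmless because $\alpha,\beta,\tilde s,\tilde z$ are $N$-periodic and \eqref{eq:117}, \eqref{eq:120} are assumed for every residue $i'\in\{0,1,\dots,N-1\}$.
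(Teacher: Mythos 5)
Your proof is correct and follows essentially the same route as the paper: the telescoping expansion of $X_{nN+i}(0)-\frakX_i(0)$, identification of the middle factor via \eqref{eq:117}, the trace computation through cyclic invariance and \cite[Proposition 3, formula (13)]{PeriodicIII} exactly as in Proposition~\ref{prop:10}, and the closure properties of $\calD_1(\cdot\,;w)$ for \eqref{eq:121}. The only differences are immaterial: your telescoping identity is the mirror image of the paper's (you put the $B$'s on the left and the $\frakB$'s on the right), and you spell out details the paper leaves implicit, such as why \eqref{eq:120} forces $(a_{nN+i'-1}/a_{nN+i'})\in\calD_1(\RR;w)$; note only that in the trace step the relevant entries of $\prod_{m=j+1}^{N+j-1}\frakB_{i+m}(0)$ are those of its second column, not its bottom row, though the values you use are the correct ones.
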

\begin{proof}
	Since
	\[
		X_n(0) - \frakX_n(0) =  
		\sum_{i'=0}^{N-1} 
		\left\{ \prod_{m=i'+1}^{N-1} \frakB_{n+m}(0) \right\}
		\big( B_{n+i'}(0) - \frakB_{n+i'}(0) \big)
		\left\{ \prod_{m=0}^{i'-1} B_{n+m}(0) \right\},
	\]
	and $\frakX_i(0) = \sigma \Id$, we get
	\begin{equation} 
		\label{eq:119}
		R_{nN+i}(0)
		=
		\sum_{i'=0}^{N-1} 
		\left\{ \prod_{m=i'+1}^{N-1} \frakB_{i+m}(0) \right\}
		\gamma_n 
		\big( B_{nN+i+i'}(0) - \frakB_{i+i'}(0) \big)
		\left\{ \prod_{m=0}^{i'-1} B_{nN+i+m}(0) \right\}.
	\end{equation}
	Observe that
	\begin{equation} 
		\label{eq:122}
		\gamma_n \big( B_{nN+i+i'}(0) - \frakB_{i+i'}(0) \big) 
		=
		\gamma_n
		\begin{pmatrix}
			0 & 0 \\
			\frac{\alpha_{i+i'-1}}{\alpha_{i+i'}} - \frac{a_{nN+i+i'-1}}{a_{nN+i+i'}} &
			\frac{\beta_{i+i'}}{\alpha_{i+i'}} - \frac{b_{nN+i+i'}}{a_{nN+i+i'}}
		\end{pmatrix},
	\end{equation}
	thus by \eqref{eq:117} we obtain
	\[
		\lim_{n \to \infty}
		\gamma_n \big( B_{nN+i+i'}(0) - \frakB_{i+i'}(0) \big)
		=
		\begin{pmatrix}
			0 & 0 \\
			\tilde{s}_{i+i'} & \tilde{z}_{i+i'}
		\end{pmatrix}.
	\]
	Now, \eqref{eq:118} easily follows from \eqref{eq:119}. The proof of \eqref{eq:118a} is analogous to Proposition
	\ref{prop:10}, cf. \eqref{eq:101} and \eqref{eq:18}.

	We proceed to showing \eqref{eq:121}. By \eqref{eq:120}, for each $i' \in \{0, 1, \ldots, N-1 \}$,
	\[
		\bigg( \frac{a_{nN+i'-1}}{a_{nN+i'}} : n \in \NN \bigg), 
		\bigg( \frac{b_{nN+i'}}{a_{nN+i'}} : n \in \NN \bigg) \in \calD_1(\RR; w),
	\]
	thus,
	\begin{equation} 
		\label{eq:123}
		\big( B_{nN+i'}(0) : n \in \NN \big) \in \calD_1 \big( \GL(2, \RR); w\big).
	\end{equation}
	Moreover, in view of \eqref{eq:122}, the condition \eqref{eq:120} implies that
	\begin{equation}
		\label{eq:124}
		\Big( \gamma_n \big( B_{nN+i+i'}(0) - \frakB_{i+i'}(0) \big) : n \in \NN \Big) 
		\in \calD_1 \big( \Mat(2, \RR); w\big).
	\end{equation}
	Now, \eqref{eq:123} and \eqref{eq:124} together with \eqref{eq:119} implies \eqref{eq:121}
\end{proof}

\subsection{A periodic modulations of Kostyuchenko--Mirzoev's class}
\label{sec:KM}
Let $N$ be a positive integer. We say that a Jacobi matrix $A$ associated to $(a_n : n \in \NN_0)$
and $(b_n: n \in \NN_0)$ belongs to $N$-periodically modulated Kostyuchenko--Mirzoev's class, if there are two
$N$-periodic sequences $(\alpha_n : n \in \ZZ)$ and $(\beta_n : n \in \ZZ)$ of positive and real numbers, respectively,
such that
\[
	a_n = \alpha_n \tilde{a}_n \Big( 1 + \frac{f_n}{\gamma_n} \Big) > 0, \qquad\text{and}\qquad
	b_n = \frac{\beta_n}{\alpha_n} a_n
\]
where $(f_n : n \in \NN_0)$ is bounded sequence, and $(\tilde{a}_n : n \in \NN_0)$ is a positive sequence satisfying
\[
	\sum_{n=0}^\infty \frac{1}{\tilde{a}_n} < \infty \qquad \text{and} \qquad
	\lim_{n \to \infty} \gamma_n \Big( 1- \frac{\tilde{a}_{n-1}}{\tilde{a}_n} \Big) = \kappa > 0
\]
for a certain positive sequence $(\gamma_n : n \in \NN_0)$ tending to infinity.

This class contains interesting examples of Jacobi matrices giving rise to self-adjoint operators which 
do not satisfy the Carleman's condition. Moreover, we formulate certain conditions under which 
the essential spectrum is empty. This class has been studied in
\cite{Kostyuchenko1999, JanasMoszynski2003, Silva2004, Silva2007, Silva2007a} in the case when $N$ is an even integer, 
$\alpha_n \equiv 1, \beta_n \equiv 0$, and
\[
	\tilde{a}_n = (n+1)^\kappa, \qquad \gamma_n = n+1
\]
for some $\kappa > 1$.

\begin{theorem} 
	\label{thm:4}
	Let $N$ be a positive integer. Let $A$ be a Jacobi matrix from $N$-periodically modulated 
	Kostyuchenko--Mirzoev's class so that $\frakX_0(0) = \sigma \Id$ for a certain $\sigma \in \{-1, 1\}$.
	Suppose that there is a weight $(w_n : n \in \NN)$, so that
	\begin{equation}
		\label{eq:51}
		\bigg( \gamma_n \Big( 1 - \frac{\tilde{a}_{n-1}}{\tilde{a}_n} \Big) : n \in \NN \bigg),
		\big( f_n : n \in \NN \big),
		\bigg( \frac{\gamma_{n-1}}{\gamma_n} : n \in \NN \bigg) \in \calD_1^N(\RR; w),
	\end{equation}
	and
	\begin{equation} \label{eq:51a}
		\lim_{n \to \infty} \frac{\gamma_{n-1}}{\gamma_n} = 1.
	\end{equation}
	Then for all $i \in \{0, 1, \ldots, N-1\}$, the matrices
	$R_{nN+i}(0) = \gamma_{nN} \big(X_{nN+i}(0) - \sigma \Id\big)$ converge to the non-zero matrix $\calR_i$, 
	\begin{equation}
		\label{eq:33}
		\big( R_{nN+i}(0) : n \in \NN \big) \in \calD_1  \big(\Mat(2, \RR); w\big),
	\end{equation}
	and
	\[
		\sum_{n = 0}^\infty
		\sup_{z \in K}{
		\big\|X_{nN+i}(z) - X_{nN+i}(0) \big\|
		}
		< \infty
	\]
	for every compact set $K \subset \CC$. Moreover, $\tr \calR_i = -\kappa \sigma N$, and
	\begin{equation}
		\label{eq:31}
		\calR_i 
		=
		\sum_{j=0}^{N-1} 
		\frac{\alpha_{i+j-1}}{\alpha_{i+j}} \big( \kappa + \frakf_{i+j} - \frakf_{i+j-1} \big)
		\left\{ \prod_{m=j+1}^{N-1} \frakB_{i+m}(0) \right\}
		\begin{pmatrix}
			0 & 0 \\
			1 & 0
		\end{pmatrix}
		\left\{ \prod_{m=0}^{j-1} \frakB_{i+m}(0) \right\}
	\end{equation}
	where $(\frakf_n : n \in \ZZ)$ is $N$-periodic sequence so that
	\begin{equation} \label{eq:63}
		\lim_{n \to \infty} |f_n - \frakf_n| = 0.
	\end{equation}
\end{theorem}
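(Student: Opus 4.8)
The plan is to recognise this class as a concrete instance of the framework of Propositions~\ref{prop:2} and \ref{prop:3}, applied with the free sequence $(\gamma_n)$ of Proposition~\ref{prop:2} taken to be $(\gamma_{nN})_n$, and then to read off every asserted conclusion. First I would check that $A$ is $N$-periodically modulated and that Proposition~\ref{prop:3} applies: since $\sum 1/\tilde a_n<\infty$ forces $\tilde a_n\to\infty$, and $1+f_n/\gamma_n\to 1$ while $(\alpha_n)$ is $N$-periodic and positive, one gets $a_n\asymp\tilde a_n$ for large $n$; hence $a_n\to\infty$, $\inf_n a_n>0$, and $\sum 1/a_n<\infty$. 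Combined with $b_n/a_n=\beta_n/\alpha_n$ this yields (a)--(c) of Definition~\ref{def:2}, and also $\sup_n\sup_{z\in K}\|B_n(z)\|<\infty$ for every compact $K\subset\CC$. By Proposition~\ref{prop:3} we then obtain $\sum_n\sup_{z\in K}\|X_n(z)-X_n(0)\|<\infty$ — in particular along every residue subsequence, which is the last displayed assertion — and we may write $X_{nN+i}(z)=X_{nN+i}(0)+E_{nN+i}(z)$ with $\sum_n\sup_K\|E_{nN+i}\|<\infty$.

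Next I would compute the limiting matrices. Fix the $N$-periodic sequence $(\frakf_n)$ of \eqref{eq:63}: each $(f_{nN+i})_n$ lies in $\calD_1(\RR;w)$, hence is bounded and, the weight being eventually bounded below, has summable increments, so it converges; call the limit $\frakf_i$. Put $g_m=f_m/\gamma_m$, $\rho_m=\tilde a_{m-1}/\tilde a_m=1-\kappa_m/\gamma_m$ with $\kappa_m:=\gamma_m(1-\tilde a_{m-1}/\tilde a_m)\to\kappa$, and $\tau_m=(1+g_{m-1})/(1+g_m)$, so that $a_{m-1}/a_m=(\alpha_{m-1}/\alpha_m)\rho_m\tau_m$. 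Using the identity $1-\rho\tau=(1-\rho)+\rho(1-\tau)$, the relations $\gamma_m(1-\tau_m)=\gamma_m(g_m-g_{m-1})/(1+g_m)$ and $\gamma_m(g_m-g_{m-1})=f_m-(\gamma_m/\gamma_{m-1})f_{m-1}$, together with $\gamma_m/\gamma_{m-1}\to 1$ and $\gamma_{nN}/\gamma_{nN+i'}\to 1$ (both from \eqref{eq:51a}), one finds
\[
	\tilde s_{i'}:=\lim_{n\to\infty}\gamma_{nN}\Big(\tfrac{\alpha_{i'-1}}{\alpha_{i'}}-\tfrac{a_{nN+i'-1}}{a_{nN+i'}}\Big)=\tfrac{\alpha_{i'-1}}{\alpha_{i'}}\big(\kappa+\frakf_{i'}-\frakf_{i'-1}\big),\qquad \tilde z_{i'}:=\lim_{n\to\infty}\gamma_{nN}\Big(\tfrac{\beta_{i'}}{\alpha_{i'}}-\tfrac{b_{nN+i'}}{a_{nN+i'}}\Big)=0,
\]
the latter because $b_m/a_m=\beta_m/\alpha_m$ identically. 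These are precisely the limits \eqref{eq:117}, so by the computation \eqref{eq:119}, \eqref{eq:122} in the proof of Proposition~\ref{prop:2} the matrices $R_{nN+i}(0)$ converge to the $\calR_i$ of \eqref{eq:118}; inserting $\tilde z\equiv 0$ and the value of $\tilde s$ turns \eqref{eq:118} into \eqref{eq:31}, while \eqref{eq:118a} becomes $\tr\calR_i=-\sigma\sum_{j=0}^{N-1}(\kappa+\frakf_{i+j}-\frakf_{i+j-1})=-\kappa\sigma N$, the $\frakf$-terms telescoping by $N$-periodicity; in particular $\calR_i\neq 0$.

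Finally, for \eqref{eq:33} I would verify hypothesis \eqref{eq:120} of Proposition~\ref{prop:2} (with its $\gamma_n$ equal to $\gamma_{nN}$) and invoke \eqref{eq:121}. The third sequence in \eqref{eq:120} vanishes. For $(1/\gamma_{nN})_n$: since $\gamma_{nN}/\gamma_{(n+1)N}=\prod_{k=1}^{N}\gamma_{nN+k-1}/\gamma_{nN+k}$ is a finite product of $\calD_1(\RR;w)$-sequences by \eqref{eq:51} and Proposition~\ref{prop:4}(i) (up to a harmless index shift in the $k=N$ factor), and $\tfrac1{\gamma_{(n+1)N}}-\tfrac1{\gamma_{nN}}=\tfrac1{\gamma_{nN}}\big(\tfrac{\gamma_{nN}}{\gamma_{(n+1)N}}-1\big)$ with $(1/\gamma_{nN})_n$ bounded, we get $(1/\gamma_{nN})_n\in\calD_1(\RR;w)$, and likewise $(1/\gamma_{nN+i'})_n\in\calD_1(\RR;w)$. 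For the middle sequence, the decomposition $1-\rho\tau=(1-\rho)+\rho(1-\tau)$ exhibits its $n$-th term as $\tfrac{\alpha_{i'-1}}{\alpha_{i'}}\,\tfrac{\gamma_{nN}}{\gamma_{nN+i'}}$ times a $\calD_1(\RR;w)$-combination of $(\kappa_{nN+i'})_n$, $(f_{nN+i'})_n$, $(f_{nN+i'-1})_n$, $(\gamma_{nN+i'}/\gamma_{nN+i'-1})_n$, $(1/\gamma_{nN+i'})_n$ and $\big(1/(1+g_{nN+i'})\big)_n$, each of which lies in $\calD_1(\RR;w)$ by \eqref{eq:51}, Proposition~\ref{prop:4}, and the fact that $\gamma_{m-1}/\gamma_m$ and $1+g_m$ are eventually bounded away from $0$. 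Hence \eqref{eq:120} holds, and Proposition~\ref{prop:2} yields \eqref{eq:121}, that is \eqref{eq:33}, for every $i\in\{0,\dots,N-1\}$.

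The main obstacle is the bookkeeping of the last paragraph: tracking residue classes modulo $N$ and the mismatch between the global scaling $\gamma_{nN}$ and the local factors $\gamma_{nN+i'}$, while repeatedly invoking the product- and inverse-closure of $\calD_1(\RR;w)$ without losing the weight. By contrast, the limit computations in the third paragraph are routine first-order expansions, and the verification that $A$ is $N$-periodically modulated and satisfies $\sum 1/a_n<\infty$ is immediate from $\sum 1/\tilde a_n<\infty$.
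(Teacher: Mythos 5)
Your proposal follows essentially the same route as the paper: reduce everything to Propositions~\ref{prop:2} and \ref{prop:3} with the scaling sequence $(\gamma_{nN})_n$, compute the limits $\tilde s_{i'}$, $\tilde z_{i'}$ of \eqref{eq:117} via the decomposition $1-\rho\tau=(1-\rho)+\rho(1-\tau)$ (this is exactly the paper's \eqref{eq:65}--\eqref{eq:66}), observe that the $b$-condition is identically zero in the Kostyuchenko--Mirzoev class, and read off \eqref{eq:31} and the trace from \eqref{eq:118}--\eqref{eq:118a}; the limit computations and the verification of the middle condition of \eqref{eq:120} are correct and match the paper. The one place where your argument does not go through as written is the claim that $(1/\gamma_{nN})_n\in\calD_1(\RR;w)$: from $\Delta(1/\gamma_{nN})=\gamma_{nN}^{-1}\big(\gamma_{nN}/\gamma_{(n+1)N}-1\big)$ you would need weighted summability of $\gamma_{nN}/\gamma_{(n+1)N}-1$, whereas membership of $(\gamma_{nN}/\gamma_{(n+1)N})_n$ in $\calD_1(\RR;w)$ only controls the weighted sum of its \emph{increments}; these are genuinely different conditions (for instance $\gamma_n=\log n$ with $w_n=\log^{3/2}n$ separates them while still being admissible for the class). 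To be fair, the paper's own proof simply asserts that only \eqref{eq:61} needs checking and never addresses this first condition of \eqref{eq:120} either, so you have reproduced the authors' argument faithfully and merely supplied an incorrect justification for a step they left unjustified. A similar mild caveat applies to your derivation of the existence of $\frakf$ from $(f_n)\in\calD_1^N(\RR;w)$, which invokes an unstated lower bound on the weight; the paper treats \eqref{eq:63} as given.
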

\begin{proof}
	To prove \eqref{eq:33}, we are going to apply Proposition \ref{prop:2}.
	To do so, we need to check \eqref{eq:120}. In fact, it is enough to show that for any $i \in \{0, 1, \ldots, N-1 \}$,
	\begin{equation}
		\label{eq:61}
		\bigg( \gamma_{jN} \Big( \frac{\alpha_{i-1}}{\alpha_{i}} - \frac{a_{jN+i-1}}{a_{jN+i}} \Big) : j \in \NN \bigg)
		\in \calD_1(\RR; w).
	\end{equation}
	We write
	\[
		\gamma_{jN} \Big( 
		\frac{\alpha_{i-1}}{\alpha_{i}} - 
		\frac{a_{jN+i-1}}{a_{jN+i}} 
		\Big) = 
		\frac{\alpha_{i-1}}{\alpha_i} 
		\gamma_{jN}
		\Big(
		1 - 
		\frac{\tilde{a}_{jN+i-1}}{\tilde{a}_{jN+i}} \Big( 1 + \frac{e_{j}}{\gamma_{jN}} \Big)
		\bigg)
	\]
	where
	\begin{equation} \label{eq:65}
		e_{j} = 
		\gamma_{jN} \bigg( \frac{1+\tfrac{f_{jN+i-1}}{\gamma_{jN+i-1}}}{1+\tfrac{f_{jN+i}}{\gamma_{jN+i}}} -1 \bigg) = 
		\frac{\gamma_{jN}}{\gamma_{jN+i-1}} \frac{f_{jN+i-1}- f_{jN+i} \tfrac{\gamma_{jN+i-1}}{\gamma_{jN+i}}}
		{1+\tfrac{f_{jN+i}}{\gamma_{jN+i}}}.
	\end{equation}
	Thus
	\begin{equation} \label{eq:66}
		\gamma_{jN} \Big( 
		\frac{\alpha_{i-1}}{\alpha_{i}} - 
		\frac{a_{jN+i-1}}{a_{jN+i}} 
		\Big) =
		\frac{\alpha_{i-1}}{\alpha_i} 
		\gamma_{jN} \Big(
		1 - 
		\frac{\tilde{a}_{jN+i-1}}{\tilde{a}_{jN+i}} \Big)
		- \frac{\alpha_{i-1}}{\alpha_i}
		\frac{\tilde{a}_{jN+i-1}}{\tilde{a}_{jN+i}}
		e_j
	\end{equation}
	and by \eqref{eq:51} we easily obtain \eqref{eq:61}.
	
	In view of \eqref{eq:51a} and \eqref{eq:63}, the formula \eqref{eq:65} gives
	\[
		\lim_{j \to \infty} e_{jN+i} = \frakf_{i-1} - \frakf_i.
	\]
	Thus, by \eqref{eq:66} 
	\[
		\lim_{j \to \infty} 
		\gamma_{jN} \Big( 
		\frac{\alpha_{i-1}}{\alpha_{i}} - 
		\frac{a_{jN+i-1}}{a_{jN+i}} 
		\Big) = \frac{\alpha_{i-1}}{\alpha_i} (\kappa + \frakf_{i} - \frakf_{i-1}).
	\]
	Fix a compact set $K \subset \CC$. Since the condition \eqref{eq:116} is satisfied, by Proposition \ref{prop:3},
	for all $z \in K$,
	\[
		X_{jN+i}(z) = \sigma \Id + \frac{1}{\gamma_j} R_{jN+i}(0) + E_{jN+i}(z)
	\]
	where
	\[
		\sup_{z \in K} \|E_n(z)\| \leq \frac{c}{\tilde{a}_n}.
	\]
	Finally, by Proposition \ref{prop:2}, we obtain \eqref{eq:33} and \eqref{eq:31}. 
\end{proof}

\subsubsection{Examples of modulated sequences}
In this section we present examples of sequences $(\tilde{a}_n : n \in \NN_0)$ and 
$(\gamma_n : n \in \NN_0)$ satisfying the assumptions of Theorem~\ref{thm:4}.

\begin{example}
Let $\kappa > 1$ and 
\[
	\tilde{a}_n = (n+1)^{\kappa} \quad \text{and} \quad 
	\gamma_n = n+1.
\]
Then
\[
	\gamma_n \Big( 1 - \frac{\tilde{a}_{n-1}}{\tilde{a}_n} \Big) = 
	\kappa + \frac{\kappa (\kappa-1)}{2n} + \calO \Big( \frac{1}{n^2} \Big).
\]
\end{example}

\begin{example}
Let
\[
	\tilde{a}_n = (n+1) \log^2(n+2) \quad \text{and} \quad
	\gamma_n = n+1.
\]
Then
\[
	\gamma_n \Big( 1 - \frac{\tilde{a}_{n-1}}{\tilde{a}_n} \Big) = 
	1 + \frac{2}{\log n} - \frac{3}{n \log n} + \calO \Big( \frac{1}{n \log^2 n} \Big).
\]
\end{example}

\begin{proposition} 
	\label{prop:5}
	Suppose that the hypotheses of Theorem~\ref{thm:4} are satisfied with $\gamma_n = n+1$. Assume that
	$\discr \calR_0 > 0$. Then 
	\begin{enumerate}[(i), leftmargin=2em]
		\item
		\label{cas:1}
		if $-\kappa + \frac{1}{N} \sqrt{\discr \calR_0} > -1$, then the operator $A$ is self-adjoint;
		\item 
		\label{cas:2}
		if $-\kappa + \frac{1}{N} \sqrt{\discr \calR_0} < -1$, then the operator $A$ is not self-adjoint.
	\end{enumerate}
	Moreover, if the operator $A$ is self-adjoint then $\sigmaEss(A) = \emptyset$.
\end{proposition}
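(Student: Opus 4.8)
The plan is to apply Theorem~\ref{thm:8a} with the modulation sequence rescaled according to $\gamma_n = n+1$, and then to run an elementary asymptotic analysis of the infinite product in the self-adjointness criterion \eqref{eq:72}. First I would check the hypotheses of Theorem~\ref{thm:8a}. Since $\gamma_n = n+1$, the matrices furnished by Theorem~\ref{thm:4} are $R_{nN+i}(0) = (nN+1)\bigl(X_{nN+i}(0) - \sigma\Id\bigr)$, so Theorem~\ref{thm:8a} applies with its parameter ``$\gamma_n$'' taken to be $\widehat{\gamma}_n := nN+1$, for which $\sum_n \widehat{\gamma}_n^{-1} = \infty$. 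The remaining assumptions follow from Theorem~\ref{thm:4}: we have $\frakX_0(0) = \sigma\Id$; the matrices $R_{nN+i}(0)$ converge to $\calR_i \neq 0$ and belong to $\calD_1\bigl(\Mat(2,\RR)\bigr)$; $\discr \calR_i = \discr \calR_0 > 0$ (the discriminant is conjugation invariant, as in the proof of Theorem~\ref{thm:10}); and $\sum_n a_n^{-1} < \infty$ because $a_n = \alpha_n\tilde{a}_n(1 + f_n/\gamma_n) \asymp \tilde{a}_n$ while $\sum_n \tilde{a}_n^{-1} < \infty$. Theorem~\ref{thm:8a} then says that $A$ is self-adjoint if and only if $\sum_{n=n_0}^\infty P_n = \infty$ for some $n_0 \geq 1$, where
\[
	P_n := \prod_{j=n_0}^{n} \Bigl| 1 + \frac{\sigma\tr R_{jN+i}(0) + \sqrt{\discr R_{jN+i}(0)}}{2(jN+1)} \Bigr|^2,
\]
and that $\sigmaEss(A) = \emptyset$ whenever $A$ is self-adjoint; so everything reduces to deciding the convergence of $\sum_n P_n$.

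For the asymptotics I would set $t_j := \bigl(\sigma\tr R_{jN+i}(0) + \sqrt{\discr R_{jN+i}(0)}\bigr)/(2N)$. Since $\discr R_{jN+i}(0) \to \discr \calR_0 > 0$ the square roots are eventually well defined and convergent, and using $\tr\calR_i = -\kappa\sigma N$ from Theorem~\ref{thm:4} one gets
\[
	t_j \longrightarrow t_\infty := \tfrac12\Bigl( -\kappa + \tfrac1N\sqrt{\discr \calR_0}\, \Bigr).
\]
For large $j$ the $j$-th factor of $P_n$ (which is a positive real) equals $\bigl(1 + t_j/(j+1/N)\bigr)^2 = 1 + 2t_\infty/j + \varepsilon_j/j + \calO(j^{-2})$ with $\varepsilon_j \to 0$, whence
\[
	\log P_n = 2t_\infty \log n + \sum_{j=n_0}^{n} \frac{\varepsilon_j}{j} + \calO(1) = \bigl( 2t_\infty + o(1) \bigr)\log n,
\]
the last step using that $\varepsilon_j \to 0$ forces $(\log n)^{-1}\sum_{j\le n}\varepsilon_j/j \to 0$ by a Ces\`aro-type estimate. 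Recalling $2t_\infty = -\kappa + \tfrac1N\sqrt{\discr \calR_0}$, this identifies the growth rate of $P_n$.

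This would finish the proof: if $-\kappa + \tfrac1N\sqrt{\discr \calR_0} > -1$, then $\log P_n \ge (-1+\delta)\log n$ for large $n$ and some $\delta > 0$, so $\sum_n P_n = \infty$ and $A$ is self-adjoint, which is case~(i); if $-\kappa + \tfrac1N\sqrt{\discr \calR_0} < -1$, then $\log P_n \le (-1-\delta)\log n$ for large $n$, so $\sum_n P_n < \infty$ and $A$ is not self-adjoint, which is case~(ii); and in the self-adjoint case the last clause of Theorem~\ref{thm:8a} gives $\sigmaEss(A) = \emptyset$. The one delicate point is the control of the $o(1)$ correction in the exponent of $n$, which must not spoil the strict inequalities separating the two cases; this is precisely where the convergence $R_{jN+i}(0) \to \calR_i$ — and not merely boundedness of these matrices — is used, as it is what yields $\varepsilon_j \to 0$.
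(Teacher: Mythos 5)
Your proposal is correct and follows essentially the same route as the paper: reduce to the divergence criterion \eqref{eq:72} of Theorem~\ref{thm:8a} (whose hypotheses are supplied by Theorem~\ref{thm:4} and $\sum_n \tilde{a}_n^{-1}<\infty$), then show the partial products grow like $n^{-\kappa+\frac{1}{N}\sqrt{\discr\calR_0}+o(1)}$ and compare with the harmonic series. The paper carries out the one-sided estimate with a fixed $\delta>0$ for case (i) and notes case (ii) is symmetric, while you do a two-sided $o(1)$-asymptotic covering both cases at once; this is only a cosmetic difference.
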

\begin{proof}
	We shall consider the case \ref{cas:1} only as the reasoning in \ref{cas:2} is similar. 
	By Theorem~\ref{thm:8a} it is enough to check whether there is $n_0 \geq 1$ so that the series
	\begin{equation} 
		\label{eq:75}
		\sum_{n=n_0}^\infty 
		\prod_{j=n_0}^n 
		\bigg| 1 + \frac{\sigma \tr R_{jN}(0) + \sqrt{\discr R_{jN}(0)}}{2 jN} \bigg|^2
	\end{equation}
	diverges. Let us select $\delta > 0$ so that
	\begin{equation}
		\label{eq:47}
		- \kappa + \frac{1}{N} \sqrt{\discr \calR_0} - \delta > -1.
	\end{equation}
	By Theorem \ref{thm:4}, $\tr \calR_0 = -\kappa \sigma N$. Hence, there is $j_0 \in \NN$ such that for all 
	$j \geq j_0$,
	\[
		\Big|
		\big( \sigma \tr R_{jN}(0) + \sqrt{\discr R_{jN}(0)}\big) 
		-
		\big( -\kappa N + \sqrt{\discr \calR_0}\big)\Big| \leq N \delta.
	\]
	Thus,
	\[
		1 + \frac{\sigma \tr R_{jN}(0) + \sqrt{\discr R_{jN}(0)}}{2 jN}
		\geq 1 + \frac{1}{2jN}\big(-\kappa N + \sqrt{\discr \calR_0} - N \delta\big),
	\]
	and so
	\begin{align*}
		\log \bigg( \prod_{j=j_0}^n \bigg| 1 + \frac{\sigma \tr R_{jN}(0) + \sqrt{\discr R_{jN}(0)}}{2 jN} \bigg| \bigg)
		&\geq
		-c + \frac{1}{2} \Big(-\kappa + \frac{1}{N} \sqrt{\discr \calR_0} - \delta \Big) \sum_{j = 1}^n \frac{1}{j} \\
		&\geq
		-c' + \frac{1}{2} \Big( -\kappa + \frac{1}{N} \sqrt{\discr \calR_0} - \delta \Big) \log n.
	\end{align*}
	Therefore,
	\[
		\prod_{j=j_0}^n
		\bigg| 1 + \frac{\sigma \tr R_{jN}(0) + \sqrt{\discr R_{jN}(0)}}{2 jN} \bigg|^2 \geq
		c n^{-\kappa + \frac{1}{N} \sqrt{\discr \calR_0} - \delta}
	\]
	which, in view of \eqref{eq:47}, implies that the series \eqref{eq:75} is divergent.
\end{proof}

\begin{example}
	For $0 < \tau < 1$ we set
	\[
		\tilde{a}_n = \ue^{n^\tau}
		\qquad\text{and}\qquad
		\gamma_n = \max\{ n^{1-\tau}, 1\}.
	\]
	Let $m \in \NN$ be chosen so that
	\[
		1 - \frac{1}{m-2} \leq \tau < 1 - \frac{1}{m-1}.
	\]
	Then
	\begin{align*}
		1-\frac{\tilde{a}_{n-1}}{\tilde{a}_n}
		&=
		\sum_{j = 1}^{m-1} 
		\frac{(-1)^{j+1}}{j!} \big(n^\tau - (n-1)^\tau \big)^j + \calO \big(n^{m(\tau-1)}\big) \\
		&=
		\sum_{j = 1}^{m-1} \frac{(-1)^{j+1}}{j!} n^{\tau j}
		\Big(1 - (1-n^{-1})^\tau\Big)^j + \calO\big(n^{m(\tau-1)}\big).
	\end{align*}
	Since
	\[
		1 - (1-n^{-1})^\tau 
		= \tau n^{-1} - \tfrac{\tau(\tau-1)}{2} n^{-2} + \calO\big(n^{-3}\big),
	\]
	we obtain
	\begin{align*}
		1-\frac{\tilde{a}_{n-1}}{\tilde{a}_n}
		&=
		n^{\tau}\Big(\tau n^{-1} - \tfrac{\tau(\tau-1)}{2} n^{-2} + \calO\big(n^{-3}\big) \Big)
		-\sum_{j = 2}^{m-1} \frac{(-1)^j}{j!} n^{\tau j} \Big(\tau n^{-1} + \calO\big(n^{-2}\big)\Big)^j 
		+ \calO\big(n^{m(\tau-1)}\big)\\
		&=
		\tau n^{\tau-1} - \tfrac{\tau(\tau-1)}{2} n^{\tau-2} + \calO\big(n^{\tau-3}\big)
		-
		\sum_{j = 2}^{m-1} \frac{(-1)^j}{j!} n^{\tau j} \Big(\tau^j n^{-j} + \calO\big(n^{-j-1}\big)\Big)
		+\calO\big(n^{m(\tau-1)}\big)\\
		&=
		\tau n^{\tau-1} - \tfrac{\tau(\tau-1)}{2} n^{\tau-2} 
		-
		\sum_{j = 2}^{m-1} \frac{(-\tau)^j}{j!} n^{j(\tau-1)} + \calO\big(n^{2\tau-3}\big)
		+\calO\big(n^{m(\tau-1)}\big).
	\end{align*}
	Hence,
	\[
		\gamma_n \bigg(1- \frac{\tilde{a}_{n-1}}{\tilde{a}_n}\bigg)
		=
		\tau + \tfrac{\tau(1-\tau)}{2} n^{-1} - \sum_{j = 2}^{m-1}\frac{(-\tau)^{j}}{j!} n^{-(j-1)(1-\tau)}
		+\calO\big(n^{-2+\tau}\big)
		+\calO\big(n^{-(m-1)(1-\tau)}\big).
	\]
	In particular, the assumptions of Theorem \ref{thm:4} are satisfied.
\end{example}

For a given sequence $(\gamma_n : n \in \NN_0)$, the following proposition provides an explicit sequence 
$(\tilde{a}_n : n \in \NN_0)$ satisfying the regularity assumptions of Theorem \ref{thm:4}.
\begin{proposition}
	Suppose that $(\gamma_n : n \in \NN)$ is a positive sequence such that
	\[
		\lim_{n \to \infty} \gamma_n = \infty,
		\qquad\text{and}\qquad
		\bigg( \frac{1}{\gamma_n} : n \in \NN \bigg) \in \calD_1^N(\RR; w)
	\]
	where $w=(w_n : n \in \NN)$ is a weight. For $\kappa > 0$ we set
	\[
		\tilde{a}_n = \exp \bigg(\sum_{j=1}^n \frac{\kappa}{\gamma_j} \bigg).
	\]
	Then
	\[
		\lim_{n \to \infty} \gamma_n \Big( 1 - \frac{\tilde{a}_{n-1}}{\tilde{a}_n} \Big) = \kappa,
	\]
	and
	\[
		\bigg( \gamma_n \Big( 1 - \frac{\tilde{a}_{n-1}}{\tilde{a}_n} \Big) : n \in \NN \bigg) \in \calD_1^N(\RR; w).
	\]
\end{proposition}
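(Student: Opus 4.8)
The plan is to reduce the whole statement to the single scalar function
\[ g(t) = \frac{1 - \ue^{-\kappa t}}{t}, \qquad g(0) := \kappa. \]
Since $\tilde a_n = \exp\big(\sum_{j=1}^n \kappa \gamma_j^{-1}\big)$, one has $\tilde a_{n-1}/\tilde a_n = \ue^{-\kappa/\gamma_n}$, so that
\[ \gamma_n \Big( 1 - \frac{\tilde a_{n-1}}{\tilde a_n} \Big) = \gamma_n \big( 1 - \ue^{-\kappa/\gamma_n} \big) = g\big( \gamma_n^{-1} \big) . \]
The crucial point is that the singularity of $g$ at the origin is removable: expanding $1 - \ue^{-\kappa t}$ gives the everywhere-convergent series $g(t) = \sum_{m=0}^\infty \frac{(-1)^m \kappa^{m+1}}{(m+1)!}\, t^m$, so $g \in C^\infty(\RR)$, and in particular $g$ is $C^1$ on every bounded interval. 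Once this is observed, the limit claim is immediate: as $\gamma_n \to \infty$ we have $\gamma_n^{-1} \to 0$, hence $\gamma_n(1 - \tilde a_{n-1}/\tilde a_n) = g(\gamma_n^{-1}) \to g(0) = \kappa$ by continuity of $g$.

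For the membership in $\calD_1^N(\RR; w)$ I would fix $i \in \{0, 1, \ldots, N-1\}$ and abbreviate $x_n = \gamma_{nN+i}^{-1}$. By hypothesis $(x_n : n \in \NN) \in \calD_1(\RR; w)$; in particular $(x_n)$ is bounded, say $0 < x_n \le M$ for all $n$. Put $L = \sup_{[0, M]} |g'| < \infty$. The mean value theorem yields $\big|\Delta\big(g(x_n)\big)\big| \le L\,|\Delta x_n|$ for every $n$, whence
\[ \sum_{n=1}^\infty \big|\Delta\big(g(x_n)\big)\big|\, w_n \le L \sum_{n=1}^\infty |\Delta x_n|\, w_n < \infty , \]
while $(g(x_n))$ is bounded because $g$ is continuous on $[0, M]$. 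Thus $(g(x_n) : n \in \NN) \in \calD_1(\RR; w)$, and since $g(x_n) = \gamma_{nN+i}\big(1 - \tilde a_{nN+i-1}/\tilde a_{nN+i}\big)$ and $i$ was arbitrary, this is precisely $\big(\gamma_n(1 - \tilde a_{n-1}/\tilde a_n) : n \in \NN\big) \in \calD_1^N(\RR; w)$.

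I do not expect any genuinely hard step; the only thing one must not overlook is that $g$ extends to a $C^1$ (indeed analytic) function across $t = 0$, which is exactly what licenses the mean value estimate even though the arguments $\gamma_n^{-1}$ accumulate at the origin. The final comparison is just the weighted counterpart of Lemma~\ref{lem:2} and could alternatively be invoked in that form.
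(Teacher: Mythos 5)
Your proposal is correct and follows essentially the same route as the paper: both reduce the claim to the scalar function $f(t) = (1-\ue^{-\kappa t})/t$ with its removable singularity at $0$, obtain the limit by continuity, and get the weighted $\calD_1$ membership via the mean value theorem applied to the (analytic) extension of $f$. No gaps; the only cosmetic difference is that you make the power-series expansion and the Lipschitz constant on $[0,M]$ explicit where the paper simply cites the analytic extension.
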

\begin{proof}
	We have
	\[
		\gamma_n \Big( 1 - \frac{\tilde{a}_{n-1}}{\tilde{a}_n} \Big) =
		\gamma_n \bigg( 1 - \exp \Big(-\frac{\kappa}{\gamma_n} \Big) \bigg) = 
		f \Big(\frac{1}{\gamma_n} \Big)
	\]
	where
	\[
		f(x) = \frac{1 - \ue^{-\kappa x}}{x}.
	\]
	Observe that
	\[
		\lim_{x \to 0} f(x) = \kappa.
	\]
	Moreover, $f$ has analytic extension to $\RR$, thus by the mean value theorem
	\[
		\Big| f \Big(\frac{1}{\gamma_{n+N}} \Big) - f \Big(\frac{1}{\gamma_n} \Big) \Big| \leq c 
		\Big| \frac{1}{\gamma_{n+N}} - \frac{1}{\gamma_{n}} \Big|,
	\]
	from which the conclusion follows.
\end{proof}

The following proposition settles the problem when the Carleman's condition is satisfied in terms of the growth of
the sequence $(\gamma_n : n \in \NN_0)$.
\begin{proposition} 
	\label{prop:6}
	Suppose that $(\gamma_n : n \in \NN)$ and $(\tilde{a}_n : n \in \NN_0)$ are positive sequences satisfying
	\[
		\lim_{n \to \infty} \gamma_n = \infty, \qquad \text{and} \qquad
		\lim_{n \to \infty} \gamma_n \Big( 1 - \frac{\tilde{a}_{n-1}}{\tilde{a}_n} \Big) = \kappa > 0.
	\]
	Then
	\begin{enumerate}[(i), leftmargin=2em]
		\item
		\label{prop:6:a}
		if $\lim_{n \to \infty} \frac{\gamma_n}{n} = 0$, then $\sum_{n=0}^\infty \frac{1}{\tilde{a}_n} < \infty$;
		\item 
		\label{prop:6:b}
		if $\lim_{n \to \infty} \frac{\gamma_n}{n} = \infty$, then $\sum_{n=0}^\infty \frac{1}{\tilde{a}_n} = \infty$.
	\end{enumerate}
\end{proposition}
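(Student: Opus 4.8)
The plan is to turn the multiplicative recursion for $\tilde{a}_n$ into an additive one by taking logarithms, and then to compare the resulting partial sums $\sum_j 1/\gamma_j$ with the harmonic partial sums, whose growth is $\log n$; the hypothesis on the ratio $\gamma_n/n$ pins down on which side of $\sum_n n^{-1}$ the series $\sum_n \tilde{a}_n^{-1}$ lies. First I would note that since $\gamma_n \to \infty$ while $\gamma_n\big(1 - \tilde{a}_{n-1}/\tilde{a}_n\big) \to \kappa$, we must have $\delta_n := 1 - \tilde{a}_{n-1}/\tilde{a}_n \to 0$, so in particular $\delta_n \in (0, 1/2)$ for $n$ large. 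Then
\[
	\log \tilde{a}_n - \log \tilde{a}_{n-1} = -\log(1 - \delta_n) = \delta_n \cdot \frac{-\log(1-\delta_n)}{\delta_n},
\]
and since $(-\log(1-x))/x \to 1$ as $x \to 0$, the quantity $c_n := \gamma_n\big(\log \tilde{a}_n - \log \tilde{a}_{n-1}\big)$ satisfies $c_n \to \kappa$. Telescoping gives, for any fixed $n_0$ and all $n > n_0$,
\[
	\log \tilde{a}_n = \log \tilde{a}_{n_0} + \sum_{j = n_0 + 1}^n \frac{c_j}{\gamma_j}.
\]

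For part \ref{prop:6:a} I would fix $\kappa' \in (0, \kappa)$ and $\epsilon \in (0, \kappa')$. Since $c_j \to \kappa$ there is $n_0$ with $c_j \geq \kappa'$ for all $j > n_0$, and since $\gamma_j / j \to 0$ there is $n_1 \geq n_0$ with $\gamma_j \leq \epsilon j$ for all $j > n_1$. Then for $n$ large,
\[
	\log \tilde{a}_n \geq \log \tilde{a}_{n_0} + \kappa' \sum_{j = n_1 + 1}^n \frac{1}{\gamma_j}
	\geq \log \tilde{a}_{n_0} + \frac{\kappa'}{\epsilon} \sum_{j = n_1 + 1}^n \frac{1}{j}
	\geq \frac{\kappa'}{\epsilon} \log n - c
\]
for a suitable constant $c$, hence $\tilde{a}_n \gtrsim n^{\kappa'/\epsilon}$ with exponent $\kappa'/\epsilon > 1$, so $\sum_n 1/\tilde{a}_n < \infty$.

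For part \ref{prop:6:b} the argument is symmetric: fix $\kappa'' > \kappa$ and $M > \kappa''$, choose $n_0$ with $c_j \leq \kappa''$ for $j > n_0$ and $n_1 \geq n_0$ with $\gamma_j \geq M j$ for $j > n_1$, and estimate
\[
	\log \tilde{a}_n \leq \log \tilde{a}_{n_0} + \kappa'' \sum_{j = n_0 + 1}^n \frac{1}{\gamma_j}
	\leq c + \frac{\kappa''}{M} \sum_{j = n_1 + 1}^n \frac{1}{j}
	\leq c' + \frac{\kappa''}{M} \log n,
\]
so $\tilde{a}_n \lesssim n^{\kappa''/M}$ with exponent $\kappa''/M < 1$, whence $\sum_n 1/\tilde{a}_n = \infty$.

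I do not expect a real obstacle here. The only points needing a little care are the passage from $\gamma_n\big(1 - \tilde{a}_{n-1}/\tilde{a}_n\big) \to \kappa$ to $\gamma_n \log\big(\tilde{a}_n/\tilde{a}_{n-1}\big) \to \kappa$, which uses $\delta_n \to 0$ together with the elementary limit $(-\log(1-x))/x \to 1$, and the choice of the auxiliary constants $\epsilon$ (resp. $M$) strictly between the relevant thresholds so that the final exponent $\kappa'/\epsilon$ (resp. $\kappa''/M$) lands on the correct side of $1$. Everything else is a routine comparison of $\sum_j 1/\gamma_j$ with the harmonic sum.
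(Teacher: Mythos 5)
Your proof is correct and follows essentially the same route as the paper's: both telescope the ratios $\tilde a_{j-1}/\tilde a_j$ and compare the result with a power $n^{-p}$, using $\gamma_j/j\to 0$ (resp. $\to\infty$) to force $p>1$ (resp. $p<1$); the paper bounds the product directly by $\prod_{j}(1-2/j)\le c\,n^{-2}$, while you pass through logarithms and the harmonic sum, which is only a cosmetic difference. All the delicate points (positivity of $\delta_n$ for large $n$, the limit $(-\log(1-x))/x\to 1$, and the placement of the exponents relative to $1$) are handled correctly.
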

\begin{proof}
	We shall prove \ref{prop:6:a} only, as the proof of \ref{prop:6:b} is similar. Let
	\[
		r_n = \gamma_n \Big( 1 - \frac{\tilde{a}_{n-1}}{\tilde{a}_n} \Big).
	\]
	There is $n_0$ such that for $n \geq n_0$,
	\[
		\frac{\gamma_n}{n} \leq \frac{\kappa}{4} \leq \frac{r_n}{2}.
	\]
	Hence, for $j \geq n_0$,
	\[
		\frac{\tilde{a}_{j-1}}{\tilde{a}_j} = 1 - \frac{r_j}{\gamma_j} 
		\leq  1 - \frac{2}{j},
	\]
	and so
	\[
		\frac{\tilde{a}_{n_0-1}}{\tilde{a}_n} 
		= 
		\prod_{j = n_0}^n \frac{\tilde{a}_{j-1}}{\tilde{a}_j}
		\leq
		\prod_{j=n_0}^n \bigg( 1 - \frac{2}{j} \bigg).
	\]
	Consequently, for a certain $c>0$,
	\[
		\frac{\tilde{a}_{n_0-1}}{\tilde{a}_n}
		\leq c n^{-2},
	\]
	which implies that
	\[
		\sum_{n=0}^\infty \frac{1}{\tilde{a}_n} < \infty.\qedhere
	\]
\end{proof}

The following proposition has a proof similar to Proposition \ref{prop:5}.
\begin{proposition}
	Suppose that the hypotheses of Theorem~\ref{thm:4} are satisfied for a sequence $(\gamma_n : n \in \NN_0)$ such that
	\[
		\lim_{n \to \infty} \frac{\gamma_n}{n} = 0.
	\]
	Assume that $\discr \calR_0 > 0$. Then
	\begin{enumerate}[(i), leftmargin=2em]
		\item if $-\kappa + \frac{1}{N} \sqrt{\discr \calR_0} > 0$ then the operator $A$ is self-adjoint;
		\item if $-\kappa + \frac{1}{N} \sqrt{\discr \calR_0} < 0$ then the operator $A$ is not self-adjoint.
	\end{enumerate}
	Moreover, if $A$ is self-adjoint then $\sigmaEss(A) = \emptyset$.
\end{proposition}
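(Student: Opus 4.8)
The plan is to follow the proof of Proposition~\ref{prop:5} almost verbatim, the only genuinely new ingredient being a sharper lower bound on the partial sums of $1/\gamma_{nN}$. First I would invoke Theorem~\ref{thm:4}: under its hypotheses the matrices $R_{nN}(0) = \gamma_{nN}\big(X_{nN}(0) - \sigma\Id\big)$ converge to the non-zero matrix $\calR_0$, belong to $\calD_1\big(\Mat(2,\RR);w\big)$ and hence to $\calD_1\big(\Mat(2,\RR)\big)$, and satisfy $\tr\calR_0 = -\kappa\sigma N$; by hypothesis $\discr\calR_0 > 0$. Moreover, membership in the Kostyuchenko--Mirzoev class gives $\sum_n 1/\tilde a_n < \infty$, whence $\sum_n 1/a_n < \infty$ because $1 + f_n/\gamma_n \to 1$ and $\alpha_n$ is bounded away from $0$; and $\gamma_n/n \to 0$ forces $1/\gamma_{nN} \geq c/(nN)$ for large $n$, so $\sum_n 1/\gamma_{nN} = \infty$. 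Consequently Theorem~\ref{thm:8a} applies with $i = 0$ (its abstract sequence $(\gamma_n)$ being our $(\gamma_{nN})_n$): $A$ is self-adjoint if and only if
\[
	\sum_{n = n_0}^\infty \prod_{j = n_0}^n \left| 1 + \frac{\sigma\tr R_{jN}(0) + \sqrt{\discr R_{jN}(0)}}{2\gamma_{jN}} \right|^2 = \infty
\]
for some $n_0 \geq 1$, and if $A$ is self-adjoint then $\sigmaEss(A) = \emptyset$ — which already settles the last sentence of the proposition.

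The key observation is that $\gamma_n/n \to 0$ makes the partial sums $\Gamma_n := \sum_{j = j_0}^n \gamma_{jN}^{-1}$ grow strictly faster than $\log n$: for every $\varepsilon > 0$ one has $\gamma_{jN} < \varepsilon jN$ for large $j$, so $\Gamma_n \geq (\varepsilon N)^{-1}\log n + \calO(1)$, and letting $\varepsilon \to 0$ gives $\Gamma_n/\log n \to \infty$. Put $A_j = \sigma\tr R_{jN}(0) + \sqrt{\discr R_{jN}(0)}$; then $A_j \to A_\infty := -\kappa N + \sqrt{\discr\calR_0}$, and $A_\infty$ has the same sign as $-\kappa + \tfrac{1}{N}\sqrt{\discr\calR_0}$.

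In case (i), $A_\infty > 0$, so $A_j > 0$ for all large $j$; choosing $n_0$ past this index makes every factor in the product at least $1$ (recall these factors are never zero, since the eigenvalues of $X_{jN}(0)$ are non-zero), hence the general term of the series is $\geq 1$ and the series diverges — so $A$ is self-adjoint. In case (ii), $A_\infty < 0$: fix $\delta > 0$ with $A_\infty + \delta < 0$, and note that for $j$ large $A_j \leq A_\infty + \delta$ while $\gamma_{jN}$ is large enough that $0 < 1 + A_j/(2\gamma_{jN}) < 1$; using $\log(1+x) \leq x$,
\[
	\log\prod_{j = j_0}^n \left| 1 + \frac{A_j}{2\gamma_{jN}} \right|^2 \leq \sum_{j = j_0}^n \frac{A_j}{\gamma_{jN}} \leq (A_\infty + \delta)\,\Gamma_n,
\]
and since $\Gamma_n/\log n \to \infty$ the right-hand side is eventually $\leq -2\log n$, so the general term of the series in \eqref{eq:72} is $\leq n^{-2}$ and the series converges for every $n_0$; hence $A$ is not self-adjoint. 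The only points needing mild care — all of them already furnished by Theorem~\ref{thm:4} and Proposition~\ref{prop:3} — are that $X_{nN}$ may be replaced by $X_{nN}(0)$ up to a summable perturbation, and that the convergences $A_j \to A_\infty$ and $\gamma_{jN} \to \infty$ are good enough to absorb the $\delta$-slack; I do not expect a serious obstacle, the whole substance being the superlogarithmic growth of $\Gamma_n$, which is exactly what shifts the threshold of Proposition~\ref{prop:5} from $-1$ to $0$.
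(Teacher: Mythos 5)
Your argument is correct and follows exactly the route the paper intends: the paper gives no separate proof here, merely noting the argument is ``similar to Proposition~\ref{prop:5}'', i.e.\ reduce to the divergence criterion \eqref{eq:72} of Theorem~\ref{thm:8a} and estimate the products using the convergence $R_{jN}(0) \to \calR_0$ from Theorem~\ref{thm:4}. You correctly isolate the one genuinely new ingredient --- that $\gamma_n/n \to 0$ forces $\sum_{j \le n} \gamma_{jN}^{-1}$ to grow superlogarithmically, which is what moves the threshold from $-1$ to $0$ --- and your case (i) (all factors eventually $\ge 1$) is if anything slightly cleaner than the power-law bound used in the model proof.
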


\subsubsection{Construction of the modulating sequences}
In this section we present examples of sequences $(\alpha_n : n \in \NN_0)$ and $(\beta_n : n \in \NN_0)$
for which one can compute $\tr \calR_0$ and $\discr \calR_0$.

The first example illustrates that the sign of $\discr \calR_0$ may be positive or negative.
\begin{example} 
	\label{ex:1}
	Let $N=3$, and 
	\[
		\alpha_n \equiv 1, \qquad\text{and}\qquad
		\beta_n \equiv 1.
	\]
	Then $\sigma = 1$ and
	\[
		\calR_0 =
		\begin{pmatrix}
			-\kappa - \frakf_0 + \frakf_2 & \kappa - \frakf_0 + \frakf_1 \\
			-\kappa + \frakf_1 - \frakf_2 & -2 \kappa + \frakf_0 - \frakf_2
		\end{pmatrix}.
	\]
	Consequently,
	\[
		\tr \calR_0 = - 3 \kappa \qquad \text{and} \qquad
		\discr \calR_0 = 
		4 
		\Big( \frakf_0^2 + \frakf_1^2 + \frakf_2^2 - \frakf_0 \frakf_1 - \frakf_0 \frakf_2 - \frakf_1 \frakf_2 \Big) 
		- 3 \kappa^2.
	\]
	In particular, taking $\frakf_0 = \frakf_1 = 0$ and $\frakf_2 = t$, we obtain
	\[
		\sign{\discr \calR_0} =
		\begin{cases}
			1 & |t| > \frac{\sqrt{3}}{2} \kappa, \\
			0 & |t| = \frac{\sqrt{3}}{2} \kappa, \\
		   -1 & |t| < \frac{\sqrt{3}}{2} \kappa.
		\end{cases}
	\]
\end{example}

In the following example, discriminant of $\calR_0$ is non-negative regardless of $(\frakf_n : n \in \ZZ)$.
\begin{example}
	\label{ex:2}
	Let $N=4$, and 
	\[
		\alpha_n \equiv 1, \qquad
		\beta_{n} = 
		\begin{cases}
			(-1)^{n/2} & \text{$n$ even}, \\
			0 & \text{otherwise.}
		\end{cases}
	\]
	Then $\sigma = 1$ and
	\[
		\calR_0 =
		\begin{pmatrix}
			-2 \kappa - \frakf_0 + \frakf_1 - \frakf_2 + \frakf_3 & -\frakf_0 + 2 \frakf_1 - \frakf_2 \\
			0 & -2 \kappa + \frakf_0 - \frakf_1 + \frakf_2 - \frakf_3
		\end{pmatrix}.
	\]
	Consequently,
	\[
		\tr \calR_0 = -4 \kappa \qquad \text{and} \qquad
		\discr \calR_0 =
		4 \bigg( \sum_{j=0}^3 (-1)^j \frakf_j \bigg)^2 \geq 0.
	\]
\end{example}

The following theorem provides a large class of modulating sequences for which $\discr \calR_0$ is always non-negative.
\begin{theorem}
	\label{thm:5}
	Let $N$ be an even integer and $\kappa > 0$. Let $(\frakf_n : n \in \ZZ)$ be $N$-periodic sequence of non-negative
	numbers and $(\alpha_n : n \in \ZZ)$ be $N$-periodic sequence of positive numbers satisfying
	\begin{equation}
		\label{eq:55}
		\alpha_0 \alpha_2 \cdots \alpha_{N-2} = \alpha_1 \alpha_3 \cdots \alpha_{N-1}.
	\end{equation}
	Let $\frakB_n$ denote the transfer matrix associated with sequences $(\alpha_n : n \in \ZZ)$
	and $\beta_n \equiv 0$. We set
	\[
		\calR_0 
		=
		\sum_{j=0}^{N-1} 
		\frac{\alpha_{j-1}}{\alpha_{j}} \big( \kappa + \frakf_{j} - \frakf_{j-1} \big)
		\left\{ \prod_{m=j+1}^{N-1} \frakB_{m}(0) \right\}
		\begin{pmatrix}
			0 & 0 \\
			1 & 0
		\end{pmatrix}
		\left\{ \prod_{m=0}^{j-1} \frakB_{m}(0) \right\}.
	\]
	Then 
	\[
		\tr \calR_0 = -(-1)^{N/2} N \kappa \qquad \text{and} \qquad
		\discr \calR_0 = 4 \bigg( \sum_{j=0}^{N-1} (-1)^j \frakf_j \bigg)^2.
	\]
\end{theorem}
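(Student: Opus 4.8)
The plan is to compute $\calR_0$ in closed form and observe that, under the hypothesis \eqref{eq:55}, it is in fact a \emph{diagonal} matrix; both stated identities then follow at once by reading off its trace and determinant. Throughout put $r_m = \alpha_{m-1}/\alpha_m > 0$, so that $\beta_n \equiv 0$ gives the anti-diagonal form $\frakB_m(0) = \begin{pmatrix} 0 & 1 \\ -r_m & 0 \end{pmatrix}$ with $\frakB_m(0)^{-1} = \begin{pmatrix} 0 & -r_m^{-1} \\ 1 & 0 \end{pmatrix}$. First I would compute the monodromy $\frakX_0(0) = \prod_{m=0}^{N-1} \frakB_m(0)$: since $N$ is even and every factor is anti-diagonal, grouping the product into $N/2$ consecutive pairs and using $\frakB_{2\ell+1}(0) \frakB_{2\ell}(0) = \diag(-r_{2\ell}, -r_{2\ell+1})$ gives
\[
	\frakX_0(0) = \diag\Big( (-1)^{N/2} \prod_{\ell=0}^{N/2-1} r_{2\ell}, \; (-1)^{N/2} \prod_{\ell=0}^{N/2-1} r_{2\ell+1} \Big).
\]
After the periodic identification $\alpha_{-1} = \alpha_{N-1}$ one has $\prod_\ell r_{2\ell} = (\alpha_1 \alpha_3 \cdots \alpha_{N-1})/(\alpha_0 \alpha_2 \cdots \alpha_{N-2})$ and $\prod_\ell r_{2\ell+1}$ is its reciprocal, so \eqref{eq:55} forces both to equal $1$; hence $\frakX_0(0) = \sigma \Id$ with $\sigma = (-1)^{N/2}$.

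Next I would rewrite each summand of $\calR_0$. Put $P_j = \prod_{m=0}^{j-1} \frakB_m(0)$, so that $P_0 = \Id$, $P_{j+1} = \frakB_j(0) P_j$ and $P_N = \frakX_0(0) = \sigma \Id$, and let $E = \begin{pmatrix} 0 & 0 \\ 1 & 0 \end{pmatrix}$. Then $\prod_{m=j+1}^{N-1} \frakB_m(0) = P_N P_{j+1}^{-1} = \sigma P_j^{-1} \frakB_j(0)^{-1}$, while a direct $2 \times 2$ computation gives $\frakB_j(0)^{-1} E = -r_j^{-1} e_1 e_1^T$. Since $\alpha_{j-1}/\alpha_j = r_j$, the $j$-th summand of $\calR_0$ therefore equals $-\sigma g_j\, x_j y_j^T$, where I set $g_j = \kappa + \frakf_j - \frakf_{j-1}$, $x_j = P_j^{-1} e_1$ and $y_j = P_j^T e_1$, so that $\calR_0 = -\sigma \sum_{j=0}^{N-1} g_j\, x_j y_j^T$.

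The crucial observation is that $x_j y_j^T$ depends only on the parity of $j$. From $P_{j+1} = \frakB_j(0) P_j$ together with the elementary relations $\frakB_j(0)^{-1} e_1 = e_2$, $\frakB_j(0)^{-1} e_2 = -r_j^{-1} e_1$, $\frakB_j(0)^T e_1 = e_2$, $\frakB_j(0)^T e_2 = -r_j e_1$, one obtains $x_{j+2} = -r_j^{-1} x_j$ and $y_{j+2} = -r_j y_j$, hence $x_{j+2} y_{j+2}^T = x_j y_j^T$. Since $x_0 y_0^T = e_1 e_1^T$ and $x_1 y_1^T = e_2 e_2^T$, it follows that $x_j y_j^T = e_1 e_1^T$ for even $j$ and $x_j y_j^T = e_2 e_2^T$ for odd $j$, and therefore
\[
	\calR_0 = -\sigma \Big( \sum_{\substack{0 \le j < N \\ j \text{ even}}} g_j \Big) e_1 e_1^T \; - \; \sigma \Big( \sum_{\substack{0 \le j < N \\ j \text{ odd}}} g_j \Big) e_2 e_2^T
\]
is diagonal.

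Finally, since $N$ is even and $(\frakf_n)$ is $N$-periodic (so $\frakf_{-1} = \frakf_{N-1}$), the telescoping identities $\sum_{j \text{ even}} (\frakf_j - \frakf_{j-1}) = \sum_{j=0}^{N-1} (-1)^j \frakf_j =: S$ and $\sum_{j \text{ odd}} (\frakf_j - \frakf_{j-1}) = -S$ give $\sum_{j \text{ even}} g_j = \tfrac{N}{2} \kappa + S$ and $\sum_{j \text{ odd}} g_j = \tfrac{N}{2}\kappa - S$. Reading off the trace and determinant of the diagonal matrix $\calR_0$ then yields $\tr \calR_0 = -\sigma N \kappa = -(-1)^{N/2} N \kappa$ and $\det \calR_0 = (\tfrac{N}{2}\kappa)^2 - S^2$, whence $\discr \calR_0 = (\tr \calR_0)^2 - 4 \det \calR_0 = N^2 \kappa^2 - (N^2 \kappa^2 - 4 S^2) = 4 S^2$, as claimed. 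The only point demanding care is the index bookkeeping in the periodic wrap-around — reducing \eqref{eq:55} to the statement that both alternating partial products of the $r_m$ over one period equal $1$, and evaluating the two telescoping sums — but I expect no genuine obstacle once the diagonality of $\calR_0$, which is the single conceptual step, has been recognized.
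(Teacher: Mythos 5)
Your proof is correct, and it reaches the same key structural fact as the paper — that under \eqref{eq:55} the matrix $\calR_0$ is diagonal, equal to $-\sigma\,\diag\big(\tfrac{N}{2}\kappa+S,\ \tfrac{N}{2}\kappa-S\big)$ with $\sigma=(-1)^{N/2}$ and $S=\sum_j(-1)^j\frakf_j$ — but by a genuinely different computation. The paper invokes the explicit product formula for $\prod_{m=k}^{\ell}\frakB_m(0)$ in terms of the orthonormal polynomials $\frakp^{[k]}_n(0)$ (from an earlier paper in the series), then uses the parity vanishing $\frakp^{[j]}_n(0)=0$ for odd $n$ to kill the off-diagonal entries and evaluates the surviving products of polynomials to identify the diagonal. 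You instead write the $j$-th summand as the rank-one matrix $-\sigma g_j\,x_j y_j^{T}$ with $x_j=P_j^{-1}e_1$, $y_j=P_j^{T}e_1$, and observe from the anti-diagonal action of each $\frakB_m(0)$ that $x_{j+2}y_{j+2}^{T}=x_jy_j^{T}$, so the summands collapse onto $e_1e_1^{T}$ and $e_2e_2^{T}$ according to the parity of $j$. I checked the individual steps ($\frakB_j(0)^{-1}E=-r_j^{-1}e_1e_1^{T}$, the recursions $x_{j+2}=-r_j^{-1}x_j$, $y_{j+2}=-r_jy_j$, the reduction of \eqref{eq:55} to $\prod_\ell r_{2\ell}=\prod_\ell r_{2\ell+1}=1$, and the telescoping of the two alternating sums) and they are all sound. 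Your route is more self-contained — it does not rely on the external polynomial identities — and makes the diagonality transparent as a conjugation-invariance statement; the paper's route has the advantage of slotting directly into the formula \eqref{eq:31} and the machinery already set up for $\calR_i$ with general $i$ and general $\beta$.
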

\begin{proof}
	Let $N = 2M$. By \cite[Proposition 3]{PeriodicIII}, for all $\ell \geq k \geq 0$ we have
	\begin{equation} 
		\label{eq:53}
		\prod_{m=k}^{\ell} \frakB_m(0) =
		\begin{pmatrix}
			-\frac{\alpha_{k-1}}{\alpha_k} \frakp^{[k+1]}_{\ell-k-1}(0) & \frakp^{[k]}_{\ell-k}(0) \\
			-\frac{\alpha_{k-1}}{\alpha_k} \frakp^{[k+1]}_{\ell-k}(0) & \frakp^{[k]}_{\ell-k+1}(0)
		\end{pmatrix}.
	\end{equation}
	Observe that for $k \geq 1$ and $j \geq 0$,
	\begin{align*}
		\prod_{m=j}^{j+2k-1} \frakB_m(0) &= 
		\prod_{m=0}^{k-1} \Big( \frakB_{j+2m+1}(0) \frakB_{j+2m}(0) \Big) 
		= 
		\prod_{m=0}^{k-1} 
		\begin{pmatrix}
			-\frac{\alpha_{j+2m-1}}{\alpha_{j+2m}} & 0 \\
			0 & -\frac{\alpha_{j+2m}}{\alpha_{j+2m+1}}
		\end{pmatrix} \\&=
		(-1)^k
		\begin{pmatrix}
			\frac{\alpha_{j+2k-3}}{\alpha_{j+2k-2}} \ldots 
			\frac{\alpha_{j+1}}{\alpha_{j+2}} \frac{\alpha_{j-1}}{\alpha_j} & 0 \\
			0 & \frac{\alpha_{j+2k-2}}{\alpha_{j+2k-1}} \ldots 
			\frac{\alpha_{j+2}}{\alpha_{j+3}} \frac{\alpha_{j}}{\alpha_{j+2}}
		\end{pmatrix}.
	\end{align*}
	In particular, by \eqref{eq:55}, we obtain
	\[
		\prod_{m=0}^{N-1} \frakB_m(0) = (-1)^M \Id.
	\]
	Moreover, by \eqref{eq:53}, for all $j \geq 0$ and $n \geq 0$,
	\begin{equation}
		\label{eq:54}
		\frakp^{[j]}_{n}(0) =
		\begin{cases}
			(-1)^k \frac{\alpha_{j+2k-2}}{\alpha_{j+2k-1}} \ldots 
			\frac{\alpha_{j+2}}{\alpha_{j+3}} \frac{\alpha_{j}}{\alpha_{j+2}} & n=2k, \\
			0 & \text{otherwise.}
		\end{cases}
	\end{equation}
	Setting
	\[
		s_j = \kappa + \frakf_j - \frakf_{j-1},
	\]
	by \eqref{eq:31}, we write
	\[
		\calR_0 =
		\sum_{j=0}^{N-1} \frac{\alpha_{j-1}}{\alpha_j} s_j
		\left\{ \prod_{m=j+1}^{N-1} \frakB_{m}(0) \right\}
		\begin{pmatrix}
			0 & 0 \\
			1 & 0
		\end{pmatrix}
		\left\{ \prod_{m=0}^{j-1} \frakB_{m}(0) \right\}.
	\]
	Therefore, by \eqref{eq:53},
	\[
		\calR_0 =
		\sum_{j=0}^{N-1} \frac{\alpha_{j-1}}{\alpha_j} s_j
		\begin{pmatrix}
			-\frac{\alpha_{j}}{\alpha_{j+1}} \frakp^{[j+2]}_{N-j-3}(0) &
			\frakp^{[j+1]}_{N-j-2}(0) \\
			-\frac{\alpha_{j}}{\alpha_{j+1}} \frakp^{[j+2]}_{N-j-2}(0) &
			\frakp^{[j+1]}_{N-j-1}(0)
		\end{pmatrix}
		\begin{pmatrix}
			0 & 0 \\
			1 & 0
		\end{pmatrix}
		\begin{pmatrix}
			-\frac{\alpha_{N-1}}{\alpha_0} \frakp^{[1]}_{j-2}(0) &
			\frakp^{[0]}_{j-1}(0) \\
			-\frac{\alpha_{N-1}}{\alpha_0} \frakp^{[1]}_{j-1}(0) &
			\frakp^{[0]}_{j}(0)
		\end{pmatrix},
	\]
	and consequently,
	\[
		\calR_0 =
		\sum_{j=0}^{N-1} \frac{\alpha_{j-1}}{\alpha_j} s_j
		\begin{pmatrix}
			-\frac{\alpha_{N-1}}{\alpha_0} \frakp^{[1]}_{j-2}(0) \frakp^{[j+1]}_{N-j-2}(0) & 
			\frakp^{[j+1]}_{N-j-2}(0) \frakp^{[0]}_{j-1}(0) \\
			-\frac{\alpha_{N-1}}{\alpha_0} \frakp^{[j+1]}_{N-j-1}(0) \frakp^{[1]}_{j-2}(0) & 
			\frakp^{[j+1]}_{N-j-1}(0) \frakp^{[0]}_{j-1}(0)
		\end{pmatrix}.
	\]
	In view of \eqref{eq:54}, we have
	\[
		\calR_0 =
		\sum_{j=0}^{N-1} \frac{\alpha_{j-1}}{\alpha_j} s_j
		\begin{pmatrix}
			-\frac{\alpha_{N-1}}{\alpha_0} \frakp^{[1]}_{j-2}(0) \frakp^{[j+1]}_{N-j-2}(0) & 
			0 \\
			0 & 
			\frakp^{[j+1]}_{N-j-1}(0) \frakp^{[0]}_{j-1}(0)
		\end{pmatrix}.
	\]
	By considering even and odd $j$, the last formula can be written in the form
	\begin{align*}
		\calR_0 &=
		\sum_{k=0}^{M-1} \frac{\alpha_{2k-1}}{\alpha_{2k}} s_{2k}
		\begin{pmatrix}
			-\frac{\alpha_{N-1}}{\alpha_0} \frakp^{[1]}_{2k-2}(0) \frakp^{[2k+1]}_{N-2k-2}(0) & 0 \\
			0 & 0
		\end{pmatrix} \\
		&\phantom{=}+
		\sum_{k=0}^{M-1} \frac{\alpha_{2k}}{\alpha_{2k+1}} s_{2k+1}
		\begin{pmatrix}
			0 & 0 \\
			0 & \frakp^{[2k+2]}_{N-2k-2}(0) \frakp^{[0]}_{2k}(0)
		\end{pmatrix}.
	\end{align*}
	Now, using \eqref{eq:54} and \eqref{eq:55} we obtain
	\[
		\frakp^{[2k+2]}_{N-2k-2}(0) \frakp^{[0]}_{2k}(0) = 
		(-1)^{M-1} \frac{\alpha_0 \alpha_2 \ldots \alpha_{N-2}}{\alpha_{1} \alpha_3 \ldots \alpha_{N-1}} \cdot 
		\frac{\alpha_{2k+1}}{\alpha_{2k}} =
		(-1)^{M-1} \cdot 1 \cdot \frac{\alpha_{2k+1}}{\alpha_{2k}}.
	\]
	Analogously one can show
	\[
		-\frac{\alpha_{N-1}}{\alpha_0} \frakp^{[1]}_{2k-2}(0) \frakp^{[2k+1]}_{N-2k-2}(0) 
		= (-1)^{M-1} \frac{\alpha_{2k}}{\alpha_{2k-1}}.
	\]
	Therefore,
	\begin{align*}
		\calR_0 &= (-1)^{M-1}
		\begin{pmatrix}
			\sum_{k=0}^{M-1} s_{2k}  & 0 \\
			0 & \sum_{k=0}^{M-1} s_{2k+1} 
		\end{pmatrix} \\
		&=
		-\sigma
		\begin{pmatrix}
			M \kappa + \sum_{k=0}^{M-1} \big( \frakf_{2k} - \frakf_{2k-1} \big) & 0 \\
			0 & M \kappa + \sum_{k=0}^{M-1} \big( \frakf_{2k+1} - \frakf_{2k} \big)
		\end{pmatrix},
	\end{align*}
	and the conclusion readily follows.
\end{proof}

\begin{bibliography}{jacobi}
	\bibliographystyle{amsplain}

\providecommand{\bysame}{\leavevmode\hbox to3em{\hrulefill}\thinspace}
\providecommand{\MR}{\relax\ifhmode\unskip\space\fi MR }
\providecommand{\MRhref}[2]{%
  \href{http://www.ams.org/mathscinet-getitem?mr=#1}{#2}
}
\providecommand{\href}[2]{#2}
\begin{thebibliography}{10}

\bibitem{Janas2011}
A.~Boutet~de Monvel, J.~Janas, and S.~Naboko, \emph{Unbounded {J}acobi matrices
  with a few gaps in the essential spectrum: constructive examples}, Integral
  Equations Operator Theory \textbf{69} (2011), no.~2, 151--170.

\bibitem{Breuer2010}
J.~Breuer, \emph{Spectral and dynamical properties of certain random {J}acobi
  matrices with growing parameters}, Trans. Amer. Math. Soc. \textbf{362}
  (2010), no.~6, 3161--3182.

\bibitem{Chihara1962}
T.S. Chihara, \emph{Chain sequences and orthogonal polynomials}, Trans. Amer.
  Math. Soc. \textbf{104} (1962), 1--16.

\bibitem{Damanik2007}
D.~Damanik and S.~Naboko, \emph{Unbounded {J}acobi matrices at critical
  coupling}, J. Approx. Theory \textbf{145} (2007), no.~2, 221--236.

\bibitem{Dombrowski2004}
J.~Dombrowski, \emph{Eigenvalues and spectral gaps related to periodic
  perturbations of {J}acobi matrices}, Spectral Methods for Operators of
  Mathematical Physics, Operator Theory: Advances and Applications, vol. 154,
  Birkh\"auser Basel, 2004, pp.~91--100.

\bibitem{Dombrowski2009}
\bysame, \emph{Jacobi matrices: Eigenvalues and spectral gaps}, Methods of
  Spectral Analysis in Mathematical Physics, Operator Theory: Advances and
  Applications, vol. 186, Birkh\"auser Basel, 2009, pp.~103--113.

\bibitem{DombrowskiJanasMoszynskiEtAl2004}
J.~Dombrowski, J.~Janas, M.~Moszy{\'n}ski, and S.~Pedersen, \emph{Spectral gaps
  resulting from periodic perturbations of a class of {J}acobi operators},
  Constr. Approx. \textbf{20} (2004), no.~4, 585--601.

\bibitem{DombrowskiPedersen1995}
J.~Dombrowski and S.~Pedersen, \emph{Orthogonal polynomials, spectral measures,
  and absolute continuity}, J. Comput. Appl. Math. \textbf{65} (1995),
  115--124.

\bibitem{DombrowskiPedersen2002a}
\bysame, \emph{Absolute continuity for unbounded {J}acobi matrices with
  constant row sums}, J. Math. Anal. Appl. \textbf{267} (2002), no.~2,
  695--713.

\bibitem{DombrowskiPedersen2002}
\bysame, \emph{Spectral transition parameters for a class of {J}acobi
  matrices}, Studia Math. \textbf{152} (2002), no.~3, 217--229.

\bibitem{HintonLewis1978}
D.B. Hinton and R.T. Lewis, \emph{Spectral analysis of second order difference
  equations}, J. Math. Anal. Appl. \textbf{63} (1978), no.~2, 421--438.

\bibitem{JanasMoszynski2003}
J.~Janas and M.~Moszy{\'n}ski, \emph{Spectral properties of {J}acobi matrices
  by asymptotic analysis}, J. Approx. Theory \textbf{120} (2003), no.~2,
  309--336.

\bibitem{Moszynski2006}
J.~Janas and M.~Moszy\'{n}ski, \emph{New discrete {L}evinson type asymptotics
  of solutions of linear systems}, J. Difference Equ. Appl. \textbf{12} (2006),
  no.~2, 133--163.

\bibitem{Janas2012}
J.~Janas and M.~Moszy{\'n}ski, \emph{Spectral analysis of unbounded {J}acobi
  operators with oscillating entries}, Studia Math. \textbf{209} (2012), no.~2,
  107--133.

\bibitem{JanasNaboko2001}
J.~Janas and S.~Naboko, \emph{Multithreshold spectral phase transitions for a
  class of {J}acobi matrices}, Recent advances in operator theory ({G}roningen,
  1998), Oper. Theory Adv. Appl., vol. 124, Birkh\"{a}user, Basel, 2001,
  pp.~267--285.

\bibitem{Janas2001}
\bysame, \emph{Spectral properties of selfadjoint {J}acobi matrices coming from
  birth and death processes}, Recent advances in operator theory and related
  topics ({S}zeged, 1999), Oper. Theory Adv. Appl., vol. 127, Birkh\"auser,
  Basel, 2001, pp.~387--397.

\bibitem{JanasNaboko2002}
\bysame, \emph{Spectral analysis of selfadjoint {J}acobi matrices with
  periodically modulated entries}, J. Funct. Anal. \textbf{191} (2002), no.~2,
  318--342.

\bibitem{Janas2009}
J.~Janas, S.~Naboko, and E.~Sheronova, \emph{Asymptotic behavior of generalized
  eigenvectors of {J}acobi matrices in the critical (\"double root\") case}, Z.
  Anal. Anwend. \textbf{28} (2009), no.~4, 411--430.

\bibitem{JanasNabokoStolz2004}
J.~Janas, S.~Naboko, and G.~Stolz, \emph{Spectral theory for a class of
  periodically perturbed unbounded {J}acobi matrices: elementary methods}, J.
  Comput. Appl. Math. \textbf{171} (2004), no.~1-2, 265--276.

\bibitem{Khan1992}
S.~Khan and D.B. Pearson, \emph{Subordinacy and spectral theory for infinite
  matrices}, Helv. Phys. Acta \textbf{65} (1992), no.~4, 505--527.

\bibitem{Kostyuchenko1999}
A.G. Kostyuchenko and K.A. Mirzoev, \emph{Generalized {J}acobi matrices and
  deficiency indices of ordinary differential operators with polynomial
  coefficients}, Funct. Anal. its Appl. \textbf{33} (1999), no.~1, 25--37.

\bibitem{Moszynski2009}
M.~Moszy\'{n}ski, \emph{Slowly oscillating perturbations of periodic {J}acobi
  operators in {$l^2(\Bbb N)$}}, Studia Math. \textbf{192} (2009), no.~3,
  259--279.

\bibitem{Motyka2015}
W.~Motyka, \emph{Spectra of some selfadjoint {J}acobi operators in the double
  root case}, Opuscula Math. \textbf{35} (2015), no.~3, 353--370.

\bibitem{Naboko2009}
S.~Naboko, I.~Pchelintseva, and L.O. Silva, \emph{Discrete spectrum in a
  critical coupling case of {J}acobi matrices with spectral phase transitions
  by uniform asymptotic analysis}, J. Approx. Theory \textbf{161} (2009),
  314--336.

\bibitem{Naboko2010}
S.~Naboko and S.~Simonov, \emph{Spectral analysis of a class of {H}ermitian
  {J}acobi matrices in a critical (double root) hyperbolic case}, Proc. Edinb.
  Math. Soc. (2) \textbf{53} (2010), no.~1, 239--254.

\bibitem{Naboko2019}
\bysame, \emph{Titchmarsh-{W}eyl formula for the spectral density of a class of
  {J}acobi matrices in the critical case}, arXiv:1911.10282, 2019.

\bibitem{Sahbani2016}
J.~Sahbani, \emph{On the spectrum of periodic perturbations of certain
  unbounded {J}acobi operators}, Opuscula Mathematica \textbf{36} (2016),
  no.~6, 807--818.

\bibitem{Schmudgen2017}
K.~Schm\"{u}dgen, \emph{The moment problem}, Graduate Texts in Mathematics,
  vol. 277, Springer, Cham, 2017.

\bibitem{Silva2004}
L.O. Silva, \emph{Uniform {L}evinson type theorem for discrete linear systems},
  Oper. Theory Adv. Appl. \textbf{154} (2004), 203--218.

\bibitem{Silva2007}
\bysame, \emph{Uniform and smooth {B}enzaid--{L}utz type theorems and
  applications to {J}acobi matrices}, Oper. Theory Adv. Appl. \textbf{174}
  (2007), 173--186.

\bibitem{Silva2007a}
L.O. Silva and J.H. Toloza, \emph{Jacobi matrices with rapidly growing weights
  having only discrete spectrum}, J. Math. Anal. Appl. \textbf{328} (2007),
  no.~2, 1087--1107.

\bibitem{Simon2010Book}
B.~Simon, \emph{{S}zegő's theorem and its descendants: Spectral theory for
  {$L^2$} perturbations of orthogonal polynomials}, Princeton University Press,
  2010.

\bibitem{Simonov2007}
S.~Simonov, \emph{An example of spectral phase transition phenomenon in a class
  of {J}acobi matrices with periodically modulated weights}, pp.~187--203,
  Birkh{\"a}user Basel, Basel, 2007.

\bibitem{Stolz1994}
G.~Stolz, \emph{Spectral theory for slowly oscillating potentials {I}. {J}acobi
  matrices}, Manuscripta Math. (1994), no.~84, 245--260.

\bibitem{PeriodicII}
G.~\'{S}widerski, \emph{Periodic perturbations of unbounded {J}acobi matrices
  {II}: {F}ormulas for density}, J. Approx. Theory \textbf{216} (2017), 67--85.

\bibitem{PeriodicIII}
\bysame, \emph{Periodic perturbations of unbounded {J}acobi matrices {III}:
  {T}he soft edge regime}, J. Approx. Theory \textbf{233} (2018), 1--36.

\bibitem{Swiderski2020}
\bysame, \emph{Spectral {P}roperties of {S}ome {C}omplex {J}acobi {M}atrices},
  Integral Equations Operator Theory \textbf{92} (2020), no.~2, Paper No. 11,
  24.

\bibitem{PeriodicI}
G.~\'{S}widerski and B.~Trojan, \emph{Periodic perturbations of unbounded
  {J}acobi matrices {I}: {A}symptotics of generalized eigenvectors}, J. Approx.
  Theory \textbf{216} (2017), 38--66.

\bibitem{ChristoffelI}
\bysame, \emph{Asymptotic behaviour of {C}hristoffel--{D}arboux kernel via
  three-term recurrence relation {I}}, accepted in Constr. Approx.,
  arXiv:1909.09107, 2019.

\bibitem{christoffelII}
\bysame, \emph{Asymptotic behaviour of {C}hristoffel--{D}arboux kernel via
  three-term recurrence relation {II}}, arXiv:2004.07826, 2020.

\bibitem{SwiderskiTrojan2019}
\bysame, \emph{Asymptotics of orthogonal polynomials with slowly oscillating
  recurrence coefficients}, J. Funct. Anal. \textbf{278} (2020), no.~3, 108326,
  55.

\bibitem{Szwarc2002}
R.~Szwarc, \emph{Absolute continuity of spectral measure for certain unbounded
  {J}acobi matrices}, Advanced Problems in Constructive Approximation,
  Springer, 2002, pp.~255--262.

\bibitem{Yafaev2019}
D.~R. Yafaev, \emph{Asymptotic behavior of orthogonal polynomials without the
  {C}arleman condition}, J. Funct. Anal. \textbf{279} (2020), no.~7, 108648.

\end{thebibliography}
\end{bibliography}

\end{document}